\documentclass[a4paper,10pt]{amsart}

\usepackage[colorlinks=true]{hyperref}

\usepackage{verbatim,latexsym,exscale,amssymb,amsthm,amsmath,amsfonts,mathrsfs,amsrefs}
\usepackage{stmaryrd}
\usepackage[all]{xy}

\xyoption{ps}
\SelectTips{cm}{}
\newdir{ >}{{}*!/-6pt/@{>}}



\numberwithin{equation}{section}
\setcounter{tocdepth}{1}


\newcommand{\ie}{\textrm{i.e.}}

\newcommand{\cC}{\mathcal{C}}
\newcommand{\cM}{\mathcal{M}}

\newcommand{\cV}{\mathcal{V}}

\newcommand{\Cor}{\mathrm{Cor}}

\newcommand{\GalMod}{\mathop{\mathsf{GalMod}}}
\newcommand{\Mod}{\mathop{\mathsf{Mod}}}

\newcommand{\Fun}{\mathsf{Fun}}

\newcommand{\Pic}{\mathop{\mathrm{Pic}}}
\newcommand{\Br}{\mathop{\mathrm{Br}}}

\newcommand{\proj}{\mathop{\mathsf{proj}}}


\newcommand{\Mor}{\mathrm{Mor}}

\newcommand{\id}{\mathrm{id}}

\newcommand{\K}{\mathbb{K}}

\newcommand{\op}{\mathrm{op}}

\newcommand{\can}{\mathrm{can}}
\newcommand{\colim}{\mathop{\mathrm{colim}}}
\newcommand{\unit}{1\!\!1}
\newcommand{\obj}{\mathop{\mathrm{ob}}}

\newcommand{\Image}{\mathrm{Im}}
\newcommand{\End}{\mathrm{End}}

\newcommand{\Hom}{\mathrm{Hom}}

\newcommand{\Cat}{\mathsf{Cat}}
\newcommand{\Alg}{\mathsf{Alg}}

\newcommand{\Weq}{\mathrm{Weq}}
\newcommand{\Fib}{\mathrm{Fib}}
\newcommand{\Cof}{\mathrm{Cof}}
\newcommand{\Ho}{\mathrm{Ho}}

\newcommand{\too}{\longrightarrow}


\numberwithin{equation}{section}

\newtheorem{thm}[equation]{Theorem}
\newtheorem{lemma}[equation]{Lemma}
\newtheorem{thm-defi}[equation]{Theorem-Definition}
\newtheorem{prop}[equation]{Proposition}
\newtheorem{cor}[equation]{Corollary}
\newtheorem*{thm*}{Theorem}
\newtheorem*{lemma*}{Lemma}
\newtheorem*{cor*}{Corollary}

\theoremstyle{definition}

\newtheorem{defi}[equation]{Definition}
\newtheorem*{conv*}{Conventions}
\newtheorem*{ack}{Acknowledgements}

\newtheorem*{conventions*}{Conventions}
\newtheorem*{acknowledgements*}{Acknowledgements}

\theoremstyle{remark}

\newtheorem{remark}[equation]{Remark}

\newtheorem{notation}[equation]{Notation}

\newtheorem*{remark*}{Remark}

\begin{document}
\title[A Morita Quillen model and a $\otimes$-categorification of Brauer groups]{A Quillen model for classical Morita theory and a tensor categorification of the Brauer group
}
\author{Ivo Dell'Ambrogio}
\address{Ivo Dell'Ambrogio, Laboratoire Paul Painlev\'e, Universit\'e de Lille~1, Cit\'e Scientifique - B\^atiment M2, F-59655 Villeneuve d'Ascq Cedex, France}
\email{ivo.dellambrogio@math.univ-lille1.fr}
\urladdr{www.math.uni-bielefeld.de/~ambrogio/ 
}

\author{Gon\c calo Tabuada}

\address{Gon{\c c}alo Tabuada, Department of Mathematics, MIT, Cambridge, MA 02139, USA}
\email{tabuada@math.mit.edu}
\urladdr{http://math.mit.edu/~tabuada} 
\thanks{The second named author was partially supported by the NEC Award-2742738.}
\subjclass[2010]{  
55U35, 
16D90, 
16K50. 
}
\address{}
\email{}

\date{\today}

\begin{abstract}
Let $\K$ be a commutative ring. 
In this article we construct a well-behaved symmetric monoidal Quillen model structure on the category of small $\K$-categories which enhances classical Morita theory. Making use of it, we then obtain the usual categorification of the Brauer group and of its functoriality. Finally, we prove that the (contravariant) corestriction map for finite Galois extensions also lifts to this categorification.
\end{abstract}

\keywords{Morita theory, Quillen model structure, Brauer group, Picard group.}

\maketitle

\vskip-\baselineskip
\vskip-\baselineskip
\vskip-\baselineskip

\section{Introduction}\label{sec:intro}
Let $\K$ be a commutative ring. In this article we address the following questions:
\medbreak
\noindent {\bf Question A:} Can the classical Morita theory of $\K$-algebras be viewed as a ``homotopy theory''?

\noindent  {\bf Question B:} Does the Brauer group of $\K$ admit a natural ``tensor categorification''? 

\subsection*{Morita theory}
The classical Morita theory~\cite{Morita} defines an equivalence relation between $\K$-algebras. Namely, two $\K$-algebras $R$ and $S$ are called {\em Morita equivalent} if they have equivalent categories of representations, or equivalently, if there exist bimodules ${}_RM_S$ and ${}_SN_R$ and bimodule isomorphisms ${}_RM\otimes_S N_R \simeq R$ and ${}_SN \otimes_R M_S \simeq S$. In order to view this classical notion from a homotopical perspective we will consider $\K$-categories, \ie\ categories enriched over $\K$-modules; see~\S\ref{sec:preliminaries}. Every $\K$-algebra is naturally a $\K$-category with a single object and all the usual notions (module, bimodule, tensor product, etc.) extend automatically to this larger setting. In particular, the category $\Cat_\K$ of (small) $\K$-categories is closed symmetric monoidal. Our answer to {\bf Question A} is then the following:
\begin{thm}\label{thm:main1}
The category $\Cat_\K$ carries a combinatorial symmetric monoidal Quillen model structure. The weak equivalences are the {\em Morita equivalences}, \ie\, the $\K$-linear functors $A\to B$ inducing an equivalence $\smash{\Mod A \stackrel{\sim}{\to}\Mod B}$ on the module categories (Definition~\ref{defi:Morita_eq}), and the cofibrations are the $\K$-linear functors that are injective on objects. Moreover, two $\K$-algebras become isomorphic in the associated homotopy category $\Ho(\Cat_\K):= \Cat_\K [\{\textrm{Morita equivalences}\}^{-1}]$ if and only if they are Morita equivalent.
\end{thm}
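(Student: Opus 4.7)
The plan is to construct the model structure in two stages. First, I would build a ``canonical'' combinatorial model structure on $\Cat_\K$ in which the weak equivalences are the ordinary $\K$-linear equivalences of categories (fully faithful plus essentially surjective on objects) and the cofibrations are the $\K$-linear functors injective on objects. Then I would obtain the Morita model structure as a left Bousfield localization of this canonical one at a small set of ``Cauchy completion'' maps.

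For the canonical stage, a direct construction is available by mimicking the Joyal--Tierney folk model structure on $\Cat$: take as generating cofibrations the obvious $\K$-linear analogues (such as $\emptyset \hookrightarrow \K[\pt]$, the inclusion into a free parallel pair, and the quotient identifying two parallel arrows) and as generating trivial cofibrations the inclusion $\K[\pt] \hookrightarrow \K[I]$, where $I$ is the walking $\K$-linear isomorphism. Alternatively, one may transfer the folk structure from $\Cat$ along the free $\K$-linearization adjunction $\K[-]\colon \Cat \rightleftarrows \Cat_\K$, exploiting that $\Cat_\K$ is locally finitely presentable and that the free $\K$-linear isomorphism provides a functorial path object. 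Either way, the resulting structure is combinatorial.

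For the localization stage, I would take a small set $\mathcal{S}$ of maps of the form $A \hookrightarrow \widehat{A}$, where $A$ runs through cofibrant representatives of a generating family of $\K$-categories (e.g.\ those freely generated by finite $\K$-quivers) and $\widehat{A}$ denotes its Cauchy completion (the idempotent completion of the finite-additive envelope). By Smith's theorem and Barwick's results on localizations of combinatorial model categories, the left Bousfield localization at $\mathcal{S}$ exists, is combinatorial, and has the same cofibrations. Since each map $A \hookrightarrow \widehat A$ induces an equivalence $\Mod \widehat A \xrightarrow{\sim} \Mod A$, and the canonical mapping spaces detect $\K$-linear natural isomorphisms, one verifies that the $\mathcal{S}$-local equivalences coincide with the Morita equivalences of Definition~\ref{defi:Morita_eq}.

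For the symmetric monoidal statement I would check the pushout-product and unit axioms at the level of generating (trivial) cofibrations of the canonical model, which is essentially immediate from the strong monoidality of $\K[-]$. To ensure that the Bousfield localization inherits the monoidal structure, it suffices to show that $\mathcal{S}$ is monoidally closed modulo Morita equivalence, which reduces to the classical fact that $\widehat{A \otimes B}$ is Morita equivalent to $\widehat{A} \otimes \widehat{B}$. Finally, for the concluding assertion, two $\K$-algebras $R$ and $S$, viewed as one-object $\K$-categories, become isomorphic in $\Ho(\Cat_\K)$ precisely when they are connected by a zig-zag of Morita equivalences, which by classical Morita theory is equivalent to the existence of invertible bimodules between them. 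The main obstacle I anticipate is verifying the compatibility of the localization with the tensor product: while Cauchy completion is well-understood on the module side, its interplay with cofibrations of $\K$-categories and with the pushout-product construction requires delicate bookkeeping.
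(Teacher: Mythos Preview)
Your two-stage plan --- transfer the folk model from $\Cat$ to get a canonical combinatorial model on $\Cat_\K$, then left-Bousfield-localize --- is exactly the paper's strategy, and your account of the canonical stage and of its monoidality is fine. The gap is in your proposed localizing set. A $\K$-category freely generated by a finite quiver has no non-identity idempotent endomorphisms (its Hom modules are free on paths), so the maps $A\hookrightarrow\widehat A$ for such $A$ cannot detect whether an \emph{arbitrary} idempotent in a test category $D$ splits. Concretely, for $A$ with one object and one loop you get $A\cong\K[x]$, and being local with respect to $\K[x]\to\widehat{\K[x]}$ says nothing about splitting an idempotent $e=e^2$ in $D$ that does not arise from a $\K$-linear projector. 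Thus your $\mathcal S$-local objects need not be idempotent complete, and the $\mathcal S$-local equivalences will not coincide with the Morita equivalences; the sentence ``one verifies that the $\mathcal S$-local equivalences coincide with the Morita equivalences'' hides exactly the step that fails.

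The paper repairs this by localizing at three hand-picked maps: $\emptyset\to 0$, the inclusion $E(1)\to R(1)$ of the walking idempotent into its universal splitting, and the inclusion $\K\sqcup\K\to S(2)$ into the universal binary direct sum. With this set it is immediate that the local objects are precisely the saturated $\K$-categories, and a short argument (via the characterization ``$F$ is a Morita equivalence $\Leftrightarrow F^\natural_\oplus$ is a $\K$-equivalence'') then identifies the local equivalences with the Morita equivalences and exhibits $\iota_A\colon A\to A^\natural_\oplus$ as a functorial fibrant replacement. For the monoidal compatibility of the localization, the operative check is that $C\otimes F$ is a Morita equivalence whenever $F$ is, for every $C$; your condition on $\widehat{A\otimes B}$ versus $\widehat A\otimes\widehat B$ is a consequence of this but is not by itself the Barwick-type hypothesis you need.
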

\subsection*{Brauer group}
Based on the foundational work of Brauer and others, Auslander and Goldman \cite{AG} introduced the Brauer group $\Br(\K)$ of a commutative ring $\K$ in their study of separable algebras. 
Concretely, $\Br(\K)$ consists of the Morita equivalence classes of Azumaya $\K$-algebras with addition induced by the tensor product. 
Any homomorphism $\K \to \mathbb{L}$ of commutative rings gives naturally rise, by extension of scalars, to a map $r_{\mathbb L/\K}\colon \Br(\K) \to \Br(\mathbb{L})$. When $\mathbb{L}$ is a finite Galois extension of a field~$\mathbb{K}$, one moreover  has a well-defined corestriction (or transfer, or Weil restriction) map $c_{\mathbb{L}/\K}\colon \Br(\mathbb{L}) \to \Br(\K)$ in the opposite direction; see~\cite{draxl}.

Conceptually, the Brauer group is known to be the group of $\otimes$-invertible Morita equivalence classes of $\K$-algebras (or small $\K$-categories), i.e.\ it is a \emph{Picard group}. 
We may think of the three illustrious mathematicians as being related by the equation:
$$ \textrm{Brauer}(\K) = \mathrm{Picard}\big((\K\textrm{-algebras},\otimes_\K ) / \textrm{Morita} \big)\,.$$
Let us make this precise. Recall that a {\em categorification} of a group~$H$ consists of an additive\footnote{In one common variant; one may instead want to use exact or triangulated categories.} category $\cC_H$ with the property that its Grothendieck group $K_0(\cC_H)$ identifies with $H$. 
In this vein we call {\em tensor categorification of~$H$}  (or \emph{$\otimes$-categorification}) any symmetric monoidal category $\cC_H^\otimes$ with the property that its Picard group $\mathrm{Pic}(\cC_H^\otimes)$ identifies with $H$. Our answer to {\bf Question B} is the~following:
\begin{thm}\label{thm:main2}
\begin{itemize}
\item[(i)] The symmetric monoidal category $\Ho(\Cat_\K)^\otimes$ of $\otimes$-invertible objects in the homotopy category $\Ho(\Cat_\K)$ is a $\otimes$-categorification of $\Br(\K)$;
\item[(ii)] Given any ring homomorphism $\K \to \mathbb{L}$, we have a base-change $\otimes$-functor $-\otimes_\K\mathbb{L}\colon \Ho(\Cat_\K)^\otimes \to \Ho(\Cat_{\mathbb{L}})^\otimes$ such that $\mathrm{Pic}(-\otimes_\K \mathbb{L})\simeq r_{\mathbb L/\K}$;
\item[(iii)] When $\mathbb L$ is a finite Galois extension of a field~$\K$, we have a corestriction $\otimes$-functor $C_{\mathbb{L}/\K}\colon \Ho(\Cat_{\mathbb{L}})^\otimes \to \Ho(\Cat_\K)^\otimes$ such that $\mathrm{Pic}(C_{\mathbb{L}/\K})\simeq c_{\mathbb{L/\K}}$.
\end{itemize}
\end{thm}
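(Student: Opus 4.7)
The plan is to reduce everything to classical Morita theory via Theorem~\ref{thm:main1}. Since the model structure of Theorem~\ref{thm:main1} is symmetric monoidal, $\Ho(\Cat_\K)$ inherits a closed symmetric monoidal structure with unit the one-object $\K$-category~$\K$. An object $\cC \in \Ho(\Cat_\K)$ is $\otimes$-invertible iff there exists $\cD$ with $\cC \otimes_\K \cD$ Morita equivalent to~$\K$. My first step is to show that any such $\cC$ is already Morita equivalent to an ordinary $\K$-algebra: the invertibility datum provides a compact progenerator in an appropriate bimodule category, and a standard idempotent-reduction then shows $\cC$ is Morita equivalent to the endomorphism $\K$-algebra of that progenerator. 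I would then invoke the classical characterization of Azumaya algebras from~\cite{AG}: a $\K$-algebra $R$ is Azumaya iff $R\otimes_\K R^{\op}$ is Morita equivalent to $\K$ via the canonical bimodule, which is precisely invertibility in $\Ho(\Cat_\K)$. Combined with Theorem~\ref{thm:main1} this identifies $\mathrm{Pic}(\Ho(\Cat_\K)^\otimes)$ with the set of Morita classes of Azumaya $\K$-algebras, i.e.\ with $\Br(\K)$, and the group laws agree because both are induced by $\otimes_\K$.

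For the functoriality along $\K \to \mathbb L$, I would verify that $-\otimes_\K \mathbb L\colon \Cat_\K \to \Cat_{\mathbb L}$ is a strong symmetric monoidal left Quillen functor with respect to the model structures of Theorem~\ref{thm:main1}: it is the identity on object sets, hence preserves cofibrations, and it preserves Morita equivalences between cofibrants by a routine tensor-hom argument. It therefore descends to a strong monoidal functor $\Ho(\Cat_\K)\to \Ho(\Cat_{\mathbb L})$ preserving $\otimes$-invertibles, and on Picard groups it sends $[R]\mapsto [R\otimes_\K \mathbb L]$, which is exactly $r_{\mathbb L/\K}$ by definition.

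For the corestriction along a finite Galois extension $\mathbb L/\K$ with group $G=\mathrm{Gal}(\mathbb L/\K)$, I would set
\[
  C_{\mathbb L/\K}(\cC) \;:=\; \Bigl(\textstyle\bigotimes_{\sigma\in G}^{\mathbb L}\, \sigma^*\cC \Bigr)^G,
\]
where $\sigma^*\cC$ is $\cC$ with its $\mathbb L$-action twisted by $\sigma$, the group $G$ permutes the tensor factors with compatible twist, and $(-)^G$ denotes taking $G$-invariant morphisms; this produces a $\K$-category by faithfully flat descent along the étale cover $\mathbb L/\K$. One then checks that this construction preserves Morita equivalences and is symmetric monoidal up to coherent natural isomorphism after passing to $\Ho(-)$, yielding the desired $\otimes$-functor $\Ho(\Cat_{\mathbb L})^\otimes \to \Ho(\Cat_\K)^\otimes$. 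Restricted to Azumaya $\mathbb L$-algebras the formula specializes to the classical corestriction of~\cite{draxl}, so its Picard-level effect is precisely $c_{\mathbb L/\K}$.

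The principal obstacles I anticipate are, first, pinning down the $\otimes$-invertible objects of the ``large'' category $\Ho(\Cat_\K)$---which a priori allows $\K$-categories with arbitrarily many objects---as genuine Azumaya $\K$-algebras; the idempotent-reduction above must be made rigorous, presumably using the cofibrant-fibrant replacement furnished by Theorem~\ref{thm:main1}. Secondly, establishing monoidality of $C_{\mathbb L/\K}$ at the homotopy-categorical level involves a delicate coherence check for the $G$-invariants construction, particularly when $|G|$ is not invertible in $\K$; this should nonetheless go through via faithfully flat descent along the étale extension $\mathbb L/\K$.
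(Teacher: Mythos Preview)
Your outline for the first two claims is essentially the paper's. The reduction of a $\otimes$-invertible $\K$-category to an Azumaya algebra is precisely Proposition~\ref{prop:Azumaya}: assuming $A,B$ saturated with a Morita equivalence $F\colon A\otimes B\to \K^\natural_\oplus$, one locates $x\in A$, $y\in B$ with $P:=F(x,y)$ an additive generator of $\proj\K$, whence $A(x,x)\otimes B(y,y)\cong\End_\K(P)$, and then checks the inclusion $A(x,x)\hookrightarrow A$ is a Morita equivalence using that $-\otimes B$ is an autoequivalence of $\Ho(\Cat_\K)$. One difference worth noting: the paper uses the characterization ``$R$ Azumaya $\Leftrightarrow$ $\exists\,B,P$ with $R\otimes B\cong\End_\K(P)$ as algebras'' (Knus--Ojanguren) rather than your ``$R\otimes_\K R^\op$ Morita equivalent to $\K$ via the canonical bimodule''; your formulation would require the extra step of identifying the abstract $\otimes$-inverse with $R^\op$ and the abstract isomorphism with the canonical one. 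For base-change, Lemma~\ref{lem:Mor} verifies preservation of Morita equivalences directly from the fully-faithful-plus-generating criterion of Proposition~\ref{prop:satu_equiv}, much as you sketch.

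The genuine gap is in the corestriction. Your formula for $C_{\mathbb L/\K}$ unpacks to Definitions~\ref{defi:skewaction1} and~\ref{defi:Cor}, and monoidality at the level of $\Mod\mathbb L$ does follow from Galois descent (Proposition~\ref{prop:monoidal1}); your worry about $|G|$ being non-invertible is a red herring, since Speiser's theorem (Proposition~\ref{prop:Speiser1}) needs no such hypothesis. The difficulty you have not addressed is that $\Cor_{\mathbb L/\K}$ is \emph{not additive}: one has $\Cor(V^{\oplus m})\cong(\Cor V)^{\oplus m^{|G|}}$ (Lemma~\ref{lem:dimension}), so direct-sum and retraction data in an $\mathbb L$-category $B$ do not transport naively to $C_{\mathbb L/\K}(B)$, and your ``one then checks that this construction preserves Morita equivalences'' is very far from routine. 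The paper sidesteps this entirely: it does \emph{not} attempt to descend the functor $C_{\mathbb L/\K}\colon\Cat_{\mathbb L}\to\Cat_\K$ through the Morita localization. Instead it uses the explicit bimodule description of $\Ho(\Alg_{\mathbb L})$ from the end of \S\ref{sec:can} and proves, via a case analysis exploiting Lemma~\ref{lem:dimension}, that $\Cor$ is compatible with tensor products ${}_RM\otimes_S N_T$ of bimodules that are finitely generated projective on the right (Proposition~\ref{prop:tensorproduct}). This yields a $\otimes$-functor $\Ho(\Alg_{\mathbb L})\to\Ho(\Alg_\K)$ directly (Lemma~\ref{lemma:alg}), after which the extension to $\Ho(\Cat_{\mathbb L})^\otimes$ is purely formal via the equivalence $\Ho(\Alg_{\mathbb L})^\otimes\simeq\Ho(\Cat_{\mathbb L})^\otimes$ already established in Proposition~\ref{prop:Azumaya}.
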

The base-change and corestriction functors of parts~(ii) and~(iii) are already defined at the level of the categories $\Cat_\K$ and $\Cat_{\mathbb{L}}$; consult \S\ref{sec:tensor-categorification}-\ref{sec:corestriction} for details.
Note that Theorem~\ref{thm:main2} not only conceptually $\otimes$-categorifies the Brauer group but its functorial behavior too. 
\begin{remark}
Of course, the above ideas on the Brauer group have been around for quite some time. They have also been extended from rings to more general objects such as dg-algebras, and to more sophisticated categorical and homotopical frameworks; see for instance 
\cite{Ant-Gep}, 
\cite{baez:week209}, 
\cite{B-Vitale}, 
\cite{Duskin}, 
\cite{Johnson},
\cite{M-Vitale}, 
\cite{toen:derived_Morita}, 
\cite{toen:azumaya},~\ldots 
The goal of this article is to look back at the original setting of $\K$-algebras and show that one can add a very nice model structure to the classical picture (this model is adapted from~\cite{CcatMorita}, where C*-categories were considered instead of $\K$-categories). 
The way corestriction fits into the story also seems to have gone unrecorded so far, in particular how it lifts to the categorification as a Morita invariant $\otimes$-functor. 
We otherwise don't claim much originality here, and encourage the reader to regard this article as a review (with full proofs) of the notion of Brauer group from the point of view of homotopical algebra; or alternatively, as a detailed case study of a certain simple but rich Quillen model structure.
\end{remark}

\begin{ack}
The first named author wishes to thank Greg Stevenson for some help with the tensor invertible objects, and the mathematics department at MIT for the pleasant stay during which this work was conceived. The second named author wishes to thank Thomas Bitoun for useful discussions. Both authors would like to thank Ben Antieau for comments and references. Finally, the authors are very grateful to the anonymous referee for all his/her comments, suggestions and references that greatly allowed the improvement of the article.
\end{ack}

\section{$\K$-linear preliminaries} \label{sec:preliminaries}
Let $\K$ be a commutative and associative ring with unit, that we fix throughout. 
We denote by $\Mod \K$ the category of $\K$-modules, and equip it with the usual closed symmetric monoidal structure~$\otimes_\K$.
A \emph{$\K$-category} $A$ is a category where each Hom set $A(x,y)$ 
comes equipped with the structure of a $\K$-module such that composition is $\K$-bilinear. 
A \emph{$\K$-linear functor} $F\colon A\to B$, or $\K$-functor, is a functor  such that the structure maps $F\colon A(x,y)\to B(Fx,Fy)$ are $\K$-linear for all objects $x,y\in \obj A$.
For $\K$-categories $A$ and $B$, with $A$ small, we denote by $\Fun_\K(A,B)$ the $\K$-category of $\K$-linear functors $A\to B$ and natural transformations between them; $\Mod A:=\Fun_\K(A^\op, \Mod \K)$ will denote the category of right $A$-modules. 
The usual Yoneda embedding $h_A\colon A\to \Mod A$ sending $x\in \obj A$ to $h_A(x)=A(-,x)$ is an example of a $\K$-linear functor. An $A$-module is called \emph{representable} if it is isomorphic to one of the form $h_A(x)$.
\begin{defi}
\label{defi:Morita_eq}
A \emph{Morita equivalence} is a $\K$-linear functor $F\colon A\to B$ with the property that the left Kan extension  $\Mod A \to\Mod B$ of $h_B\circ F\colon A\to B\to \Mod B$ along $h_A\colon A\to \Mod A$ (where $h_A,h_B$ are the Yoneda embeddings) is an equivalence.
\end{defi}
We write $\Cat_\K$ for the category of all \emph{small} $\K$-categories and $\K$-linear functors between them. Note that $\Cat_\K$ contains the category $\Alg_\K$ of all $\K$-algebras as a full subcategory: simply consider every $\K$-algebra $A$ as a $\K$-category with a single object having $A$ as its endomorphism algebra.  
\subsection*{Monoidal structure}
The category $\Cat_\K$ inherits from $\Mod \K$ a closed symmetric monoidal structure: the tensor product $A\otimes B$ of two $\K$-categories has object set $\obj (A\otimes B)=\obj A\times \obj B$, Hom $\K$-modules $(A\otimes B)((x,y),(x',y'))=A(x,x')\otimes_\K B(y,y')$, and the evident induced composition; the tensor unit object is~$\K$, and the internal Hom is provided by $\Fun_\K(-,-)$.
\subsection*{Direct sums}

Let $A\in \Cat_\K$.
The \emph{additive hull} $A_\oplus$ of $A$ is the small $\K$-category defined as follows: its objects are formal words $x_1\cdots x_n$ (also written $x_1\oplus\cdots\oplus x_n$) on the set $\obj(A)$ and the Hom $\K$-modules are the spaces of matrices, written: 
\[
A(x_1\cdots x_n, y_1\cdots y_m) := \bigoplus_{ \substack{j=1,\ldots ,n \\ i=1,\ldots, m}} A(x_j, y_i)  \; \ni \;[a_{ij}]\,.
\]
Composition is the usual matrix multiplication, $[b_{ij}]\circ [a_{ij}] = [\sum_k b_{ik} a_{kj} ]$. There is a canonical fully faithful functor $ \sigma_A\colon A \too A_\oplus $, $x \mapsto x$.
Given a $\K$-linear functor $F\colon A\to B$, we define a new $\K$-linear functor $F_\oplus \colon A_\oplus \to B_\oplus$ by setting
\[
F_\oplus(x_1\cdots x_n):= F(x_1) \cdots F(x_n)
\quad \textrm{ and } \quad
F([a_{ij}]):= [F(a_{ij})]
\]
for all objects $x_1\cdots x_n\in \obj(A_\oplus)$ and arrows $[a_{ij}]\in A_\oplus$. We obtain in this way a well-defined additive hull functor $(-)_\oplus\colon \Cat_\K \to\Cat_\K$.
\subsection*{Retracts}
Let $A\in \Cat_\K$. We denote by $A^\natural$ the \emph{idempotent completion} (a.k.a.\ pseudo-abelian or Karoubian envelope) of $A$. Its objects are the pairs $(x,e)$ where $x\in \obj(A)$ and $e=e^2$ is an idempotent endomorphism of~$x$. Its Hom $\K$-modules are $A^\natural ((x,e),(y,f))= f A(x,y) e$, and the composition is induced by that of $A$. This defines a functor $(-)^\natural\colon \Cat_\K\to \Cat_\K$ which comes equipped with a natural embedding $\tau_A\colon A\to A^\natural$, $x\mapsto (x,1_x)$.
\subsection*{Saturation} 
Let $A\in \Cat_\K$. We say that $A$ is \emph{saturated} if it admits all finite direct sums and if all idempotents split.
We define the \emph{saturation} of $A$ to be $A_\oplus^\natural:=(A_\oplus)^\natural$.  Note that $A_\oplus^\natural$ is indeed always saturated. We thus obtain a \emph{saturation functor} $\smash{(-)_\oplus^\natural\colon \Cat_\K\to \Cat_\K}$ and a natural $\K$-linear embedding $\smash{\iota_A:=\tau_{A_\oplus}\circ \sigma_A\colon A\to A_\oplus^\natural}$.
Let $X\subseteq A$ be a (full) subcategory of some~$A\in \Cat_\K$. We say that $X$ \emph{additively generates $A$}, or that $\obj(X)$ is a set of \emph{additive generators for $A$}, if the smallest (full) subcategory of $A$ containing $X$ and closed under taking direct sums and retracts is $A$ itself. We leave the easy proof of the next two lemmas as an exercise for the reader.
\begin{lemma} \label{lemma:first_char}
If $F\colon A\to B$ is any $\K$-linear functor, then $F^\natural_\oplus$ is an equivalence if and only if $F$ is fully faithful and the image of $\iota_B\circ F$ additively generates~$B^\natural_\oplus$.
\qed
\end{lemma}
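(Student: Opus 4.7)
My plan is to decompose the condition ``$F^\natural_\oplus$ is an equivalence of categories'' into the standard pair ``fully faithful plus essentially surjective'' and to match each half with one of the two hypotheses on the right-hand side.

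First I would analyze the action of $F^\natural_\oplus$ on Hom modules. Given objects $X=(x_1\cdots x_n,e)$ and $Y=(y_1\cdots y_m,f)$ of $A^\natural_\oplus$, the induced map sends $[a_{ij}]\in f\cdot A_\oplus(x_1\cdots x_n,y_1\cdots y_m)\cdot e$ to $[F(a_{ij})]$ in $F_\oplus(f)\cdot B_\oplus(Fx_1\cdots Fx_n,Fy_1\cdots Fy_m)\cdot F_\oplus(e)$. This is tautologically the restriction to the $f$-$e$-bimodule parts of the direct sum $\bigoplus_{i,j} F\colon A(x_j,y_i)\to B(Fx_j,Fy_i)$. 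Hence if $F$ is fully faithful, so is $F^\natural_\oplus$; conversely, specializing to $X=(x,1_x)$, $Y=(y,1_y)$ and using that $\iota_A$ and $\iota_B$ are fully faithful with $F^\natural_\oplus\circ \iota_A=\iota_B\circ F$, full faithfulness of $F^\natural_\oplus$ forces that of $F$.

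Assuming $F$ fully faithful, I would then identify the essential image $\mathcal{I}$ of $F^\natural_\oplus$ inside $B^\natural_\oplus$. It clearly contains $\iota_B(F(\obj A))$, and I claim it is closed under finite direct sums and retracts. Closure under direct sums is immediate from the definition of $(-)_\oplus$, since $F^\natural_\oplus(X)\oplus F^\natural_\oplus(X')\cong F^\natural_\oplus(X\oplus X')$. For retracts, an idempotent on an object $F^\natural_\oplus(W)$ in $B^\natural_\oplus$ lifts uniquely, by full faithfulness of $F^\natural_\oplus$, to an idempotent on $W$; since $A^\natural_\oplus$ is idempotent complete, this idempotent splits there, and $F^\natural_\oplus$ sends the splitting to a splitting of the original. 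Consequently $\mathcal{I}$ equals the smallest direct-sum- and retract-closed full subcategory of $B^\natural_\oplus$ containing $\iota_B(F(\obj A))$, so $F^\natural_\oplus$ is essentially surjective precisely when $\iota_B\circ F$ additively generates~$B^\natural_\oplus$.

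The main subtlety I anticipate is the idempotent-lifting step used to close $\mathcal{I}$ under retracts: this is where full faithfulness of $F$ is genuinely needed, since without it an idempotent in $B^\natural_\oplus$ on $F^\natural_\oplus(W)$ need not come from one on $W$. Once this observation is in place, the rest is routine bookkeeping with the explicit matrix and idempotent descriptions of $A^\natural_\oplus$ and $B^\natural_\oplus$ recalled in the preliminaries.
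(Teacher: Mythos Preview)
Your argument is correct. The decomposition into ``fully faithful'' and ``essentially surjective'' is the natural one, and your treatment of each half is sound; the only point you leave implicit is the reverse inclusion showing that the essential image $\mathcal{I}$ is \emph{contained} in the subcategory additively generated by $\iota_B(F(\obj A))$, but this is immediate since every object of $A^\natural_\oplus$ is already a retract of a finite sum of objects in the image of~$\iota_A$.

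There is nothing to compare against: the paper does not give a proof of this lemma, stating explicitly that the easy proof is left as an exercise for the reader. Your write-up is exactly the kind of verification the authors had in mind.
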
 
\begin{lemma} \label{lemma:unique_extension}
Let $D$ be a (not necessarily small) saturated $\K$-category. If a $\K$-functor $F\colon A\to B$ is fully faithful and its image additively generates~$B$, then every $\K$-linear functor $G\colon A\to D$ extends,  uniquely up to isomorphism, along $F$ to a $\K$-linear functor $B\to D$.
\qed
\end{lemma}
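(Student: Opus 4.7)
Without loss of generality I assume $F$ is the inclusion of a full subcategory $A\hookrightarrow B$ (the hypotheses are invariant under replacing $F$ by its essential image, and the conclusion is stated up to isomorphism). The additive generation hypothesis then translates to: every $b\in B$ arises as the splitting of some idempotent on a finite direct sum of objects of $A$. That is, there exist $a_1,\dots,a_n\in A$ and morphisms $b\xrightarrow{i}\bigoplus_j a_j\xrightarrow{p} b$ with $pi=1_b$. The resulting idempotent $e:=ip=[e_{ij}]\in \End_B(\bigoplus a_j)$ has entries $e_{ij}\in B(a_j,a_i)=A(a_j,a_i)$ lying in $A$, by full faithfulness of $F$, so we can form the idempotent $[G(e_{ij})]\in \End_D(G(a_1)\oplus\cdots\oplus G(a_n))$.

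\textbf{Existence.} For each $b\in B$ I fix such a presentation and define $\tilde G(b)$ to be a splitting in $D$ of the idempotent $[G(e_{ij})]$, which exists because $D$ is saturated. The trivial presentation $(n=1,\,e=\id)$ at an object $a\in A$ yields $\tilde G(a)=G(a)$, realizing the extension along $F$. For a morphism $f\colon b\to b'$ between objects with chosen presentations $(a_\bullet,e)$ and $(a'_\bullet,e')$, I lift $f$ to the matrix $\widetilde f:=i'fp=[f_{ji}]\in \bigoplus A(a_i,a'_j)$ and set $\tilde G(f)$ to be the restriction of $[G(f_{ji})]\colon \bigoplus G(a_i)\to\bigoplus G(a'_j)$ between the chosen splittings. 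The identities $e'\widetilde f=\widetilde f=\widetilde fe$ ensure this restriction is well defined, and composition is preserved because $\widetilde{g\circ f}=\widetilde g\circ \widetilde f$ (using $p'i'=1$). Functoriality, $\K$-linearity, and the natural isomorphism $\tilde G|_A\cong G$ are then routine.

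\textbf{Uniqueness, and main obstacle.} Any $\K$-linear functor between $\K$-categories automatically preserves finite biproducts (a biproduct is characterized by relations $p_ji_k=\delta_{jk}$ and $\sum_k i_kp_k=\id$, all preserved by $\K$-linearity) and respects splittings of idempotents in the target. Hence if $\tilde G'\colon B\to D$ is a second extension with an isomorphism $\tilde G'|_A\cong G$, there is a canonical iso $\tilde G'(\bigoplus a_j)\cong\bigoplus G(a_j)$ under which $\tilde G'(e)$ becomes $[G(e_{ij})]$; both $\tilde G(b)$ and $\tilde G'(b)$ split the same idempotent and are therefore canonically isomorphic, with naturality verified by tracing the lifts of morphisms. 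The one genuinely technical point is that the pointwise construction of $\tilde G$ is independent of the chosen presentation of each object: two presentations $(a_\bullet,e)$ and $(a'_\bullet,e')$ of the same $b\in B$ are interchanged by the matrices $i'p$ and $ip'$, which lie in $A$ by full faithfulness and which, upon applying $G$, produce the desired canonical iso between the two candidate values of $\tilde G(b)$ in $D$. This is the only place where matrix bookkeeping is required; everything else is essentially forced by the universal properties of biproducts and idempotent splittings.
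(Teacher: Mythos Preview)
The paper does not prove this lemma; it is explicitly left as an exercise (note the \verb|\qed| immediately after the statement), so there is no argument to compare against. Your proof is correct and is the expected one.

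One point deserves a word of clarification. Your opening reduction asserts that every $b\in B$ is a retract of a single object $\bigoplus_j a_j$ with $a_j\in A$, but such a direct sum need not exist in~$B$ itself. (For instance, take $\K=\K_1\times\K_2$ with $\K_1,\K_2$ nonisomorphic fields, let $A=\{a\}$ with $a=\K$, and let $B$ be the full subcategory of $\proj\K$ on $\{\K,\K_1,\K_1^2\}$; then $\{a\}$ additively generates $B$, yet $\K_1^2$ is not a retract in $B$ of any direct sum of copies of $a$ that lives in~$B$.) What additive generation does give you --- and what your matrix manipulations actually use --- is the existence of finitely many $a_j\in A$ together with morphisms $i_j\colon b\to a_j$ and $p_j\colon a_j\to b$ in $B$ satisfying $\sum_j p_j i_j=1_b$; equivalently, $b$ is a retract of $a_1\oplus\cdots\oplus a_n$ in $B_\oplus$ (or $A_\oplus$, by full faithfulness). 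One checks by a short induction on the additive filtration that this weaker statement holds for every $b\in B$. Since the genuine direct sum $\bigoplus_j G(a_j)$ does exist in the saturated target~$D$, your construction of $\tilde G$ and the uniqueness argument go through verbatim under this reading.
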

\begin{remark}
If in the definition of a $\K$-category $A$ one replaces $\Mod \K$ by the associated category of cochain complexes of $\K$-modules, one obtains the notion of {\em differential graded (=dg) category} $\mathcal{A}$; see \cite{keller:dg}. We warn the reader that in this setting the notion of saturated (or equivalently smooth and proper) is very different from the above one on $\K$-categories.
\end{remark}

\section{Proof of Theorem~\ref{thm:main1}} \label{sec:can}
The construction of the Quillen model structure on $\Cat_\K$ is divided into two steps. First we construct a well-behaved ``canonical'' Quillen model structure on $\Cat_\K$; see Theorem~\ref{thm:can}. Then we localize it in order to obtain the desired Morita model structure; see Definition~\ref{defi:Mor_model}.
\subsection*{Canonical model structure}
Note that we have a natural adjunction
\begin{equation} 
\label{Quillen_pair_can}
\xymatrix{\Cat_\K \ar@/^2ex/[d]^{[-]} \\ 
\Cat \ar@/^2ex/[u]^{\mathrm F_\K} }
\end{equation}
where $[-]$ is the underlying category functor (which forgets the $\K$-linear structure) and $\mathrm F_\K$ is the free $\K$-category functor, given by the following construction: for a small category~$C$, let $\mathrm F_\K C$ be the $\K$-category with the same objects as~$C$, with Hom $\K$-modules given by $\mathrm{F}_{\K} C(x,y)= \coprod_{C(x,y)} \K$, and with composition induced by that of~$C$. 
Recall, e.g.\ from \cite{rezk:folk}, the definition of the well-known \emph{canonical} (or \emph{folk}) model structure on the category $\Cat$ of small categories. It consists of the following three classes of functors:
\begin{align*}
\Weq_\can &= \{\textrm{(ordinary) equivalences of categories} \} \\
\Cof_\can &= \{\textrm{functors } F\colon A\to B \textrm{ such that } \obj(F)\colon \obj(A)\to \obj(B)\textrm{ is injective}\} \\
\Fib_\can & = \{\textrm{functors } F \textrm{ allowing the lift of isomorphisms of the form } Fx\stackrel{\sim}{\to} y \}\,.
\end{align*}
The canonical model is cofibrantly generated, with the following sets of generating cofibrations and generating trivial cofibrations:
\begin{eqnarray*}
I_\can = \{ \emptyset \to \bullet \,,\,  \bullet \sqcup\, \bullet \to \mathbf1 \,,\,  P\to \mathbf1 \}  &&
J_\can = \{0\colon \bullet\to \mathbf I\}\,.
\end{eqnarray*}
Same explanations are in order. Here $\emptyset$ denotes the initial (empty) category and $\bullet$ the final category (consisting of precisely one object and its identity arrow). The categories $\mathbf1$, $P$ and~$\mathbf I$ all have precisely two objects $0$ and $1$, and, respectively, one non-identity arrow $0\to 1$, a pair of distinct arrows $0\rightrightarrows 1$, and one isomorphism $u\colon 0\stackrel{\sim}{\to} 1$. The functors are the evident ones; in particular, $\bullet \sqcup \bullet \to \mathbf1$ is the inclusion of the two endpoints, and $0\colon \bullet \to \mathbf I$ is the inclusion of~$0$.

\begin{lemma}
\label{lemma:pushout_J_can}
For any $\K$-functor $F\colon A\to B$, consider the following pushout square
 \begin{equation} \label{eq:special_po}
\xymatrix{
A\otimes \mathrm F_\K(\bullet) = A\ar@{}[dr]|{\lrcorner} \ar[d]_{A\otimes \mathrm F_\K(0)} \ar[r]^-F & B \ar[d]^G \\
A\otimes \mathrm F_\K(\mathbf I) \ar[r]^-{H} & \tilde B
} 
 \end{equation}
 in $\Cat_\K$. Then the $\K$-functor $G$ is a $\K$-linear equivalence and the object-function of $H$ is injective on the subset $\{(x,1)\mid x\in \obj A\}\subset \obj(A\otimes \mathrm F_\K(\mathbf I))$.
\end{lemma}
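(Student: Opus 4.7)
The strategy is to identify the pushout $\tilde B$ explicitly as ``$B$ with a fresh object $\bar x$ and an isomorphism $F(x) \xrightarrow{\sim} \bar x$ adjoined for each $x \in \obj A$'', and then read off both conclusions from this description. Since $\mathbf I$ is the walking isomorphism and $A \otimes \mathrm F_\K(-)$ is parametrized by $\obj A$, the $\K$-functor $A \otimes \mathrm F_\K(0)$ freely adjoins, for each $x \in \obj A$, a fresh object $(x,1)$ together with an isomorphism $(x,0) \xrightarrow{\sim} (x,1)$; pushing this out along $F$ should produce the claimed description.

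Concretely, I would define a candidate $\tilde B'$ with object set $\obj B \sqcup \{\bar x : x \in \obj A\}$ and Hom $\K$-modules $\tilde B'(c,d) := B(\phi c, \phi d)$, where $\phi$ is the identity on $\obj B$ and sends $\bar x \mapsto F(x)$, with composition inherited from $B$ via $\phi$. Take $\tilde G \colon B \to \tilde B'$ to be the identity on Hom-modules, and $\tilde H \colon A \otimes \mathrm F_\K(\mathbf I) \to \tilde B'$ to send $(x,0) \mapsto F(x)$, $(x,1) \mapsto \bar x$, with every elementary arrow $a \otimes w$ (where $w \in \{1_0, u, u^{-1}, 1_1\}$) sent to $F(a) \in B(F(x), F(y))$. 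Then I would verify that $(\tilde B', \tilde G, \tilde H)$ satisfies the pushout universal property. The key observation is that $1_{F(x)} \in \tilde B'(F(x), \bar x) = B(F(x), F(x))$ is a genuine isomorphism $\iota_x$ in $\tilde B'$ coinciding with $\tilde H(1_x \otimes u)$; hence given any compatible pair $(P, Q)$, the sought factorization $\Phi \colon \tilde B' \to C$ is forced to send $\iota_x$ to $Q(1_x \otimes u)$, which then unambiguously determines $\Phi$ on all the ``new'' Hom-modules by conjugation formulas such as $\Phi(g) = Q(1_y \otimes u) \circ P(g)$ for $g \in \tilde B'(b, \bar y) = B(b, F(y))$; functoriality and uniqueness follow by direct check.

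Once the isomorphism $\tilde B \cong \tilde B'$ is established, both conclusions are immediate. The $\K$-functor $G$, identified with $\tilde G$, is bijective on Hom-modules by construction and essentially surjective since $\bar x \cong F(x) = G(F(x))$ via $\iota_x$, hence a $\K$-linear equivalence. Meanwhile $H$, identified with $\tilde H$, sends $(x,1) \mapsto \bar x$, visibly injectively in $x \in \obj A$. The only point requiring real care is the universal property verification, but this reduces to the routine fact that a $\K$-linear functor out of $A \otimes \mathrm F_\K(\mathbf I)$ is pinned down by its values on objects together with the images of the walking isomorphisms $1_x \otimes u$.
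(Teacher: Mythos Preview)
Your proposal is correct and follows essentially the same approach as the paper's own proof: both construct the pushout explicitly as the category $\tilde B$ obtained from $B$ by adjoining, for each $x\in\obj A$, a fresh object isomorphic to $F(x)$, with Hom modules borrowed from~$B$, and then verify the universal property by writing down the unique factorization via the evident conjugation formulas. Your notation $\bar x$, $\phi$, $\iota_x$ differs cosmetically from the paper's, but the construction, the essential surjectivity argument for $G$, the injectivity on $\{(x,1)\}$, and the case-by-case definition of the induced functor are all the same.
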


\begin{proof}
To prove the claims it suffices to give an explicit description of~$\tilde B$, $G$ and~$H$ with the required properties.
Let $\tilde B$ be the category with object set $\obj(\tilde B)=\obj(B) \sqcup \obj(A)$ and Hom $\K$-modules given by
\[
\tilde B(x,y):=
\left\{
\begin{array}{ll}
B(Fx,Fy) &  \textrm{if } x,y\in \obj(A) \\
B(Fx,y) & \textrm{if } x\in \obj(A), y\in \obj(B) \\
B(x,Fy) & \textrm{if } x\in \obj(B), y\in \obj(A)  \\
B(x,y) & \textrm{if } x,y\in \obj(B).
\end{array}
\right.
\]
The composition in $\tilde B$ is induced by that of~$B$ in the evident way, and there is an obvious fully faithful inclusion $G \colon B\to \tilde B$ defined by $x\mapsto x$ $(x\in \obj B)$. 
Moreover $G$ is essentially surjective, because for any ``new'' object $x\in \obj(A)\subset \obj (\tilde B)$ the arrow $1_{Fx}\in B(Fx,Fx)= \tilde B(x,Fx)= \tilde B(x,GFx)$ defines an isomorphism in $\tilde B$ between $x$ and an object in the image of~$G$; thus $G$ is a $\K$-equivalence.
There is also a functor $H\colon A\otimes \mathrm F_\K(\mathbf I)\to \tilde B$ defined on objects by $(x,0)\mapsto Fx$ and $(x,1)\mapsto x$ and on arrows by the formula~$H(f\otimes 1_0)=H(f\otimes 1_1)=H(f\otimes u)=F(f)$.
Clearly $H$ satisfies the required injectivity, and the resulting square \eqref{eq:special_po} is commutative. It only remains to verify the pushout property. 
Consider a diagram of $\K$-functors
\[
\xymatrix{
A \ar[d]_{A \otimes \mathrm F_\K (0)} \ar[r]^-{F}  & B \ar[d]^G \ar@/^3ex/[ddr]^{T_0}  \\
A\otimes  \mathrm F_\K (\mathbf I) \ar@/_3ex/[drr]_{T_1} \ar[r]^-{H} & \tilde B  \ar@{..>}[dr]|T & \\
&& C \,.
}
\]
such that $T_0F=T_1(A \otimes \mathrm F_\K(0))$.
In order to complete this to a commutative diagram, the $\K$-functor $T\colon \tilde B\to C$ must be defined on objects by 
\[
Tx :=
\left\{
\begin{array}{ll}
T_1(x,1) & \textrm{if } x\in \obj(A) \\
T_0x & \textrm{if } x\in \obj(B)
\end{array}
\right.
\]
and on arrows $f\in \tilde B(x,y)$ by
\[
T(f):=
\left\{
\begin{array}{ll}
T_1(1_y\otimes u)\circ  T_0(f)\circ T_1(1_x\otimes u^{-1}) & \textrm{if }  x,y\in \obj(A) \\
T_0(f)\circ T_1(1_x\otimes u^{-1}) & \textrm{if }  x\in \obj(A), y\in \obj(B) \\
T_1(1_y\otimes u)\circ T_0(f) & \textrm{if }   x\in \obj(B), y\in \obj(A)  \\
T_0 (f) & \textrm{if }  x,y\in \obj(B).
\end{array}
\right.
\]
It is straightforward (though mildly tedious) to verify that $T$ is well-defined, makes the diagram commute, and is the unique such $\K$-functor.
This shows that we have indeed constructed the required pushout.
\end{proof}

\begin{thm}
\label{thm:can}
 The category $\Cat_\K$ carries a Quillen model structure where the weak equivalences are the $\K$-equivalences, the fibrations are the $\K$-linear functors $F$ such that $[F]$ is a fibration in~$\Cat$, and the cofibrations are the $\K$-linear functors that are injective on objects. In particular every object is fibrant and cofibrant and the adjunction \eqref{Quillen_pair_can} becomes a Quillen pair. Moreover, this Quillen model structure satisfies the following properties:
\begin{enumerate}
\item[(i)] It is cofibrantly generated and we may take the sets $\mathrm F_\K (I_\can)$ and $\mathrm F_\K (J_\can)$ as the generating cofibrations and trivial cofibrations.
\item[(ii)] It is symmetric monoidal in the sense of Mark Hovey \textup(see \cite{hovey:model}*{ch.\,4}\textup);
\item[(iii)] Is is combinatorial in the sense of Jeff Smith \textup(see~\cite{beke:sheafifiable}\textup);
\item[(iv)] Every map $F\colon A\to B$ admits a natural ``mapping cylinder'' factorization (as a cofibration $J$ followed by a trivial fibration~$Q$)
\[
\xymatrix{ 
F:A\, \ar@{>->}[r]^-J & \tilde B \ar@{->>}[r]^-Q_-\sim & B\,,
}
\]
where $\tilde B$ is the pushout $(A \otimes \mathrm F_\K \mathbf (I)) \sqcup_A B$ of $A\otimes \mathrm F_\K(0) \colon A=A\otimes \K \to A\otimes \mathrm F_\K \mathbf (I)$ along~$F$ and $J$ is the composite $A=A\otimes \K \stackrel{A\otimes \mathrm F_\K(1)}{\longrightarrow} A\otimes \mathrm F_\K \mathbf (I)\to \tilde B$. The $\K$-linear functor $Q$ is induced by the pushout property of $\tilde B$ by the two $\K$-linear functors $\id_B$ and $F\otimes F' \colon  A\otimes \mathrm F_\K \mathbf (I) \to B\otimes \K =B$, where $F':=\mathrm F_\K(u\mapsto 1_{\K})$. (See \eqref{eq:mapping_cylinder} for a pictorial description.)

\end{enumerate}
\end{thm}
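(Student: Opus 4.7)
The plan is to construct the model structure by transferring the folk model structure on $\Cat$ along the free/forgetful adjunction $\mathrm F_\K \dashv [-]$ of \eqref{Quillen_pair_can}, using the standard transfer criterion for cofibrantly generated model categories. Existence together with~(i) come out in one stroke, the characterization of fibrations becomes tautological, and the cofibrations are identified via a small-object argument applied to the generators $\mathrm F_\K(I_\can)$. Properties~(ii)--(iv) are then deduced from the explicit form of the generators, from local presentability of $\Cat_\K$, and from the concrete construction of Lemma~\ref{lemma:pushout_J_can}.

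\medskip

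\textbf{Transfer step.} Two hypotheses must be verified: (a) the sources of $\mathrm F_\K(I_\can)$ and $\mathrm F_\K(J_\can)$ are small, which is automatic since $\Cat_\K$ is locally finitely presentable (a fact we will also invoke for~(iii)); and~(b) every relative $\mathrm F_\K(J_\can)$-cell complex is a $\K$-equivalence. A single pushout of the generating trivial cofibration along an arbitrary $\K$-functor is exactly the square of Lemma~\ref{lemma:pushout_J_can} (taking $A=\K$ there), and the lemma provides an explicit model $\tilde B$ in which the pushout map $G\colon B\to\tilde B$ is a $\K$-equivalence. Stability of $\K$-equivalences under the ensuing transfinite composition is read off the same explicit description: each cellular attachment is fully faithful and adjoins new objects each of which becomes isomorphic to one already present, and both properties pass to the colimit.

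\medskip

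\textbf{Classification of the classes.} For fibrations, by adjunction a $\K$-functor $F$ has the right lifting property against $\mathrm F_\K(J_\can)$ iff $[F]$ has it against $J_\can$, i.e.\ iff $[F]\in\Fib_\can$. A $\K$-functor is a trivial fibration iff it is a $\K$-equivalence that is surjective on objects, equivalently iff $[F]$ is a trivial fibration in $\Cat$. Every map in $\mathrm F_\K(I_\can)$ is injective on objects, and this property is closed under coproducts, pushouts, transfinite composition and retracts; hence every cofibration is injective on objects. Conversely, any $F\colon A\to B$ injective on objects admits a lift against any trivial fibration $P\colon C\to D$ by first lifting the objects of $B\setminus F(\obj A)$ using surjectivity on objects of $P$, and then lifting the Hom-modules using full faithfulness of $P$.

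\medskip

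\textbf{Monoidal and remaining properties.} For~(ii), the functor $\obj\colon\Cat_\K\to\mathsf{Set}$ preserves colimits (as a left adjoint), so the object-function of a pushout-product of two cofibrations is the pushout-product of two injections of sets, which is itself an injection; the corresponding statement for trivial cofibrations reduces, via the generating set $\mathrm F_\K(J_\can)$, to a further instance of Lemma~\ref{lemma:pushout_J_can}. Property~(iii) is automatic from cofibrant generation over a locally presentable category. Finally,~(iv) is essentially a renaming of Lemma~\ref{lemma:pushout_J_can}: the map $J$ is injective on objects (hence a cofibration), and $Q$ is both surjective on objects and fully faithful, as is straightforward to read off the explicit description of $\tilde B$ (hence a trivial fibration). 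The only non-formal ingredient in the whole argument is clause~(b) of the transfer theorem, whose substantive content is already packaged in Lemma~\ref{lemma:pushout_J_can}; this is the main obstacle and it has already been overcome by the preceding lemma.
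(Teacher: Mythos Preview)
Your proposal is correct and follows the same core strategy as the paper: transfer the folk model structure along $\mathrm F_\K \dashv [-]$ via Kan's lifting theorem, with Lemma~\ref{lemma:pushout_J_can} supplying the one nontrivial check (that pushouts of the generating trivial cofibration are $\K$-equivalences).

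The one structural difference worth noting is in how the cofibrations are identified with the object-injective $\K$-functors. You argue both inclusions directly: $\Cof\subseteq\mathcal C$ from closure properties of the generators (using that $\obj$ preserves colimits), and $\mathcal C\subseteq\Cof$ by an explicit lift against trivial fibrations. The paper instead proves $\mathcal C\subseteq\Cof$ by reference to an analogous argument elsewhere, then establishes the mapping-cylinder factorization~(iv), and only \emph{afterwards} deduces $\Cof\subseteq\mathcal C$ via the standard retract trick (factor a cofibration through its cylinder and lift against the trivial-fibration half). Your route is a bit more self-contained and keeps~(iv) logically independent of the cofibration characterization; the paper's route has the pleasant feature of exhibiting~(iv) as a working tool rather than an afterthought. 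Similarly, for~(ii) the paper defers to an external reference while you sketch the pushout-product argument directly; your reduction of the acyclicity clause to Lemma~\ref{lemma:pushout_J_can} (the domain of $i\Box j$ being exactly the $\tilde B$ of that lemma) is correct.
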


\begin{defi}
We will call the model structure of Theorem \ref{thm:can} the \emph{canonical model structure} on $\Cat_\K$ and we will denote it by $\cM_\can$.
\end{defi}

\begin{remark}
Note that $\cM_\can$ is not cellular in the sense of Hirschhorn \cite{hirschhorn}*{Definition~11.1.1} since not all cofibrations are monomorphism. For instance from the adjunction \eqref{Quillen_pair_can} one observes that the generating cofibration $\mathrm F_\K(P\to \mathbf1)$ is not a monomorphism since it admits two evident (fully faithful) distinct sections $S_1, S_2\colon \mathrm F_\K (\mathbf1)\to \mathrm F_\K (P)$.
\end{remark}

\begin{remark}
The analog of Theorem \ref{thm:can} holds (with the same proof) for $\cV\textrm-\Cat$, the category of small categories enriched over a bicomplete closed symmetric monoidal category~$\cV$, at least if one assumes that the tensor unit $\unit\in \cV$ is a finite object; see \cite{hovey:model}*{2.1.1}. Also, in order for the weak equivalences to coincide with the $\cV$-equivalences one should  assume that $[-]=\Hom_\cV(\unit,-)$ detects $\cV$-equivalences, and in order for the model to be combinatorial one should assume $\cV$ locally presentable. In case~$\mathcal V$ itself comes equipped with a (sufficiently nice) symmetric monoidal model structure, there exist general results for lifting the model structure of the base to the ``canonical'' one on $\cV\textrm-\Cat$, whose equivalences are Hom-wise in~$\cV$; see~\cite{Berger-Moerdijk},~\cite{Lurie}*{A.3}.
\end{remark}

\begin{proof}[Proof of Theorem \ref{thm:can}]
In order to establish the model structure it suffices to check conditions (1) and (2) of Kan's lifting theorem \cite{hirschhorn}*{Theorem 11.3.2}.
Condition (1) follows from the fact that the domains of the maps in $\mathrm F_\K(I_\can)$ and $\mathrm F_\K(J_\can)$ are small objects; note that $\K$ is small (even finite) in~$\Mod \K$. The functor $[-]$ preserves sequential colimits and a $\K$-linear functor $F$ is a $\K$-equivalence in $\Cat_\K$ if and only if $[F]$ is an equivalence in~$\Cat$. Hence, condition (2) follows from Lemma~\ref{lemma:pushout_J_can}. 
 This establishes the model structure with the described weak equivalences and fibrations, and moreover proves~(i).

Since $\Mod \K$, being a Grothendieck category, is locally presentable then so is $\Cat_\K$ by the main result of~\cite{kelly-lack:VCAT}. Hence the model structure on $\Cat_\K$ established above is not only cofibrantly generated  but also combinatorial, as claimed in~(iii).
Denote by $\cC$ the class of $\K$-functors which are injective on objects and by $\Cof$ the class of cofibrations of the model structure. 
Then, the same argument as the one in the proof of \cite{ivo:unitary}*{Lemma 4.10(ii)} shows us that $\cC\subseteq \Cof$.

Before proving the converse inclusion, let us establish~(iv).
For any $F\colon A\to B$, perform the construction described in~(iv) (we identify $A$ with $A\otimes \K$ and $B$ with $B\otimes \K$):
\begin{equation}\label{eq:mapping_cylinder}
\xymatrix{
 & A  \ar[r]^-F \ar[d]_{A\otimes \mathrm F_\K(0)} \ar@{}[dr]|{\lrcorner} &
  B \ar[d]^G \ar@/^2ex/[dr]^{1_B} &
   \\
 A \ar@/_4ex/[rr]_{J} \ar[r]^-{A\otimes 1 } &
  A\otimes \mathrm F_\K(\mathbf I) \ar[r]^-H \ar@/_4ex/[rr]_-{F\otimes \mathrm F_\K(u\mapsto 1_\K) } &
   \tilde B \ar[r]^-{\exists!\,Q}&
    B
}
\end{equation}
By Lemma~\ref{lemma:pushout_J_can}, $H$ is injective on the objects of the form $(x,1)$, hence $J$ is injective on objects and thus (by the inclusion we have already proved) is a cofibration. Also by Lemma~\ref{lemma:pushout_J_can} $G$ is a $\K$-linear equivalence, implying that $Q$ is. 
Since $Q$ is obviously surjective on objects, it is actually a trivial fibration. This proves~(iv).

Now let $F\colon A\to B$ be any cofibration and factor it as $A\to \tilde B \stackrel{\sim}{\to} B$, according to~(iv). In particular $A\to \tilde B$ is injective on objects.
Since $F$ is a cofibration and $\tilde B \stackrel{\sim}{\to} B$ is a trivial fibration, there exists a lifting in the following square
\[
\xymatrix{
A \ar[d]_F \ar[r] & {\tilde B} \ar[d]^{\sim} \\
B \ar@{..>}[ur] \ar@{=}[r] & B
}
\]
which implies that $F$ must also be injective on objects. This shows $\Cof \subseteq \mathcal C$ and thus $\Cof=\mathcal C$, as claimed.
Finally, the easy verification of (ii) proceeds exactly as in the proof of \cite{ivo:unitary}*{Proposition~4.16}, using the characterization of cofibrations we have just proved.
\end{proof}

\begin{remark}
\label{rem:cylinder}
Applying (iv) of Theorem~\ref{thm:can} we obtain the following cylinder
\begin{equation}\label{eq:cylinder}
\xymatrix{
*+<3.0ex>{\K \sqcup \K} \ar[rr]^-{(1_\K,1_\K)} \ar@{>->}[dr]_{(J_1,J_2)} && \K \\
& \mathrm F_\K \mathbf I \ar@{->>}[ur]_Q^\sim & 
}
\end{equation}
on $\K \in \Cat_\K$ (factor $F=(1_\K,1_\K)$). Here, $J_i$ is the unique $\K$-linear functor sending the unique object of $\K$ to $i\in \obj(\mathrm F_\K \mathbf I)$, and $Q$ the unique $\K$-linear functor sending the isomorphism $u$ to the identity. 
The corresponding canonical cylinder for any object $A\in \Cat_\K$ can then be obtained by tensoring \eqref{eq:cylinder} with~$A$.
\end{remark}

\begin{cor}
\label{cor:Ho(M_can)}
The category $\Ho(\cM_\can)$ is obtained from $\Cat_\K$ simply by taking as morphisms the isomorphism classes of $\K$-functors.
\end{cor}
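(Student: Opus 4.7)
The plan is to apply the standard characterization: in any model category, morphisms in the homotopy category between cofibrant-fibrant objects are precisely left homotopy classes of morphisms. Since Theorem~\ref{thm:can} asserts that every object in $\cM_\can$ is both cofibrant and fibrant, one has $\Ho(\cM_\can)(A,B) = \Cat_\K(A,B)/\simeq_\ell$ for any $A,B$. It therefore suffices to show that the left homotopy relation $\simeq_\ell$ between $\K$-functors $A\rightrightarrows B$ coincides with the relation of being naturally isomorphic as $\K$-linear functors.

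To compute left homotopy explicitly, I would use the canonical cylinder object $A\otimes \mathrm F_\K \mathbf I$ for $A$ exhibited in Remark~\ref{rem:cylinder}, which is indeed a valid cylinder since it was constructed via the mapping cylinder factorization in Theorem~\ref{thm:can}(iv). A left homotopy $H\colon A\otimes \mathrm F_\K \mathbf I \to B$ from $F$ to $G$ is, by the universal property of $\mathrm F_\K$ and the tensor product in $\Cat_\K$, exactly the data of: (a) the values of $H$ on objects $(x,0)$ and $(x,1)$, forced to be $Fx$ and $Gx$; (b) the values $H(f\otimes 1_0)=F(f)$ and $H(f\otimes 1_1)=G(f)$ on arrows; and (c) a family of morphisms $\eta_x := H(1_x\otimes u)\colon Fx\to Gx$. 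Since $u$ is invertible in $\mathbf I$ and $\K$-functors preserve invertibility, each $\eta_x$ is an isomorphism. Naturality of $\eta$ is forced by the identity $(1_y\otimes u)\circ (f\otimes 1_0)= f\otimes u = (f\otimes 1_1)\circ(1_x\otimes u)$ in $A\otimes \mathrm F_\K \mathbf I$. Conversely, any natural isomorphism $\eta\colon F\Rightarrow G$ assembles into such an $H$ by the same formulas; one checks that $H$ is a well-defined $\K$-functor, which is routine given the explicit description of $A\otimes \mathrm F_\K \mathbf I$.

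Putting the two ingredients together, two $\K$-functors $F,G\colon A\to B$ represent the same morphism in $\Ho(\cM_\can)$ if and only if they are naturally isomorphic, which proves the corollary. The main (modest) obstacle is the explicit identification in the second paragraph, i.e.\ unwinding what a $\K$-functor out of $A\otimes \mathrm F_\K \mathbf I$ is and matching it bijectively with a natural isomorphism; once this bookkeeping is done, the result follows formally from the general model-categorical machinery together with the fact, already established in Theorem~\ref{thm:can}, that every object of $\cM_\can$ is cofibrant-fibrant.
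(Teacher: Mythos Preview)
Your proposal is correct and follows essentially the same approach as the paper's proof: both use that every object is fibrant-cofibrant to reduce to homotopy classes with respect to the cylinder $A\otimes \mathrm F_\K\mathbf I$ of Remark~\ref{rem:cylinder}, and then identify homotopies $H\colon A\otimes \mathrm F_\K\mathbf I\to B$ with natural isomorphisms of $\K$-functors. The only difference is that you spell out this last bijection in detail, whereas the paper simply asserts it.
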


\begin{proof}
Since every object is fibrant and cofibrant in the canonical model, we have a natural identification 
 $\Hom_{\Ho(\cM_\can)}(A,B)=\Hom_{\Cat_\K}(A,B)/_\sim$,  where the equivalence relation $\sim$ is the homotopy relation defined by the canonical cylinder objects of Remark~\ref{rem:cylinder}. Now it suffices to notice that, for any pair of parallel $\K$-functors $F_0,F_1\colon A\rightrightarrows B$, the homotopies $H\colon A\otimes \mathrm F_\K\mathbf I \to B $ from $F_0$ to $F_1$ are in bijection with the isomorphisms $F_0\simeq F_1$ of $\K$-functors.
\end{proof}

\subsection*{Morita model structure}
Given a $\K$-category~$A$, let $\mathsf P(A)$ be the full subcategory of $\Mod A$ consisting of those $A$-modules $M$ such that the represented functor $\Hom_{A}(M,-)\colon \Mod A\to \Mod \K$ commutes with arbitrary colimits. Consider also the full subcategory $\proj A\subseteq \Mod A$ of all finitely generated projective $A$-modules. As usual, an object $M\in \Mod A$ is \emph{projective} if $\Hom_A(M,-)$ is exact, and it is \emph{finitely generated} if there exists an epimorphism $F\to M$ with $F$ a finite coproduct of representable right $A$-modules.

\begin{lemma}
\label{lemma:saturation_vs_P}
We have an equality $\mathsf P(A)=\proj A$ and a natural $\K$-linear equivalence $A^\natural_\oplus \stackrel{\sim}{\to} \mathsf P(A)$.
\end{lemma}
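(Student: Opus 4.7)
The plan is to prove the two claims in sequence: first the identification $\mathsf P(A) = \proj A$, then the equivalence $A^\natural_\oplus \stackrel{\sim}{\to} \mathsf P(A)$.

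For the inclusion $\proj A \subseteq \mathsf P(A)$, I would start from the Yoneda identification $\Hom_A(h_A(x),-) \cong \ev_x$. Since colimits in $\Mod A = \Fun_\K(A^\op, \Mod \K)$ are computed pointwise in $\Mod \K$, evaluation at $x$ preserves all colimits. A finite direct sum $F = h_A(x_1)\oplus\cdots\oplus h_A(x_n)$ then satisfies $\Hom_A(F,-)\cong \prod_{i=1}^n \ev_{x_i}$; in the additive category $\Mod \K$ this finite product agrees with a finite coproduct and hence also preserves colimits. Finally, the class of colimit-preserving functors is closed under retracts, so any retract of such an $F$ lies in $\mathsf P(A)$.

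For the reverse inclusion $\mathsf P(A)\subseteq \proj A$, let $M\in \mathsf P(A)$. Exactness of $\Hom_A(M,-)$ makes $M$ projective, so it only remains to check that $M$ is finitely generated. I would write $M$ as the filtered colimit of its finitely generated submodules $\{M_j\}_j$ (note: every element of $M(x)$ is in the image of some $h_A(x)\to M$, and sums of finitely generated submodules are finitely generated). Colimit-preservation yields $\Hom_A(M,M)\cong \colim_j\Hom_A(M,M_j)$, so $\id_M$ factors as $M\to M_j\hookrightarrow M$ for some $j$, exhibiting $M$ as a retract of the finitely generated module $M_j$, hence finitely generated itself. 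Combined with projectivity, $M$ is thus a retract of a finite coproduct of representables, i.e.\ $M\in\proj A$.

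For the equivalence $A^\natural_\oplus\stackrel{\sim}{\to}\mathsf P(A)$, note first that $\proj A$ is a saturated $\K$-category: it is closed under finite direct sums inside $\Mod A$, and idempotents split there since $\Mod A$ is abelian, preserving finite generation and projectivity. Applying Lemma~\ref{lemma:unique_extension} to $\iota_A\colon A\to A^\natural_\oplus$ (fully faithful, with image additively generating the target) and to the Yoneda embedding $h_A\colon A\to \proj A$, I obtain a $\K$-linear functor $\tilde h_A\colon A^\natural_\oplus\to \proj A$, unique up to natural isomorphism, sending $(x_1\cdots x_n, e)$ to the image of the idempotent $e$ acting on $\bigoplus_i h_A(x_i)$. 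Essential surjectivity is built into the definition of $\proj A$. Full faithfulness is a direct Yoneda computation: the hom module in $\proj A$ between $\Image(e)$ and $\Image(f)$ is $f\cdot \Hom(\bigoplus h_A(x_i),\bigoplus h_A(y_j))\cdot e = f\cdot A_\oplus(x_1\cdots x_n,y_1\cdots y_m)\cdot e$, which matches $A^\natural_\oplus((x_1\cdots x_n,e),(y_1\cdots y_m,f))$ by construction; naturality in $A$ follows from the universal property underlying Lemma~\ref{lemma:unique_extension}. The main obstacle I expect is the filtered-colimit step in $\mathsf P(A)\subseteq \proj A$: verifying that the poset of finitely generated submodules of $M$ is filtered and that its colimit in $\Mod A$ recovers $M$ — standard in the $\K$-algebra case but requiring a little care in the enriched setting.
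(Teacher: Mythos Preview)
Your proof is correct and structurally parallel to the paper's, but you handle the equality $\mathsf P(A)=\proj A$ more elementarily. The paper reduces that step to a reference: it quotes Pareigis for the fact that in a Grothendieck category an object is finitely generated projective iff it is projective and $\Hom$ preserves arbitrary coproducts, and then observes that preserving coproducts and cokernels is the same as preserving all colimits. You instead prove the nontrivial inclusion $\mathsf P(A)\subseteq \proj A$ directly by writing $M$ as the filtered colimit of its finitely generated submodules and lifting $\id_M$ through the system; this is self-contained and avoids the external citation, at the cost of having to verify (as you flag) that the poset of finitely generated submodules is filtered with colimit~$M$, which is routine since the representables form a generating set of finitely presented objects. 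For the equivalence $A^\natural_\oplus\stackrel{\sim}{\to}\mathsf P(A)$ the two arguments are essentially identical: both note that $\proj A$ is saturated, extend the Yoneda embedding using the universal property of saturation (your appeal to Lemma~\ref{lemma:unique_extension} is exactly what the paper does implicitly), and identify $\proj A$ with retracts of finite sums of representables to get essential surjectivity; naturality up to isomorphism follows from uniqueness in both accounts.
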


\begin{proof}
This is well-know, but we reprove it for convenience.
Recall (e.g.\ from \cite{pareigis}*{\S4.11 Lemma~1}) that, in the Grothendieck category $\Mod A$, an object $M$ is finitely generated projective if and only if it is projective and the functor $\Hom_A(M,-)$ preserves arbitrary coproducts. Thus $M\in \proj A$ if and only if  $\Hom_A(M,-)$ preserves coproducts and cokernels. But since every colimit can be written as a cokernel of a map between two coproducts, this is the same as preserving arbitrary colimits. 
Thus $\mathsf P(A)=\proj A$, as claimed. By the Yoneda lemma, the representable modules generate the abelian category $\Mod A$ and are projective. Thus, on the one hand, since $\proj A$ is saturated the Yoneda embedding $h_A\colon A\to \Mod A$ induces a $\K$-linear functor $A^\natural_\oplus \to \proj A$, which is unique up to isomorphism.
On the other hand, if $P$ is projective then every epimorphism from a coproduct of representables onto $P$ must split; if $P$ is moreover finitely generated, then the splitting factors through a finite summand. Therefore $\proj A$ consists precisely of the retracts of finite sums of representables. Hence the functor $A^\natural_\oplus \to \proj A$ is  an equivalence. By uniqueness, it is also natural up to isomorphism. (Such naturality will suffice for all our purposes; for a stronger statement, one would have to choose canonical cokernels in $\Mod A$.)
\end{proof}

\begin{prop}
\label{prop:satu_equiv}
A $\K$-functor $F\colon A\to B$ between small $\K$-categories is a Morita equivalence (Definition~\ref{defi:Morita_eq}) if and only if $F_\oplus^\natural\colon A_\oplus^\natural\to B_\oplus^\natural$ is a $\K$-equivalence. In other words, $F$ is a Morita equivalence if and only if $F$ is fully faithful and the image of $\iota_B\circ F$ additively generates~$B^\natural_\oplus$.
\end{prop}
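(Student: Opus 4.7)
The equivalence between the two conditions appearing on the right-hand side is the content of Lemma \ref{lemma:first_char}, so we may focus on proving that $F\colon A\to B$ is a Morita equivalence if and only if $F_\oplus^\natural\colon A_\oplus^\natural\to B_\oplus^\natural$ is a $\K$-equivalence.

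The key intermediate claim is that, for every small $\K$-category $A$, the saturation embedding $\iota_A\colon A\to A_\oplus^\natural$ is itself a Morita equivalence. To see this, observe that $\iota_A^\op\colon A^\op\to(A_\oplus^\natural)^\op=(A^\op)_\oplus^\natural$ is fully faithful with additively generating image, while $\Mod\K$ is saturated. By Lemma \ref{lemma:unique_extension}, every $A$-module extends, uniquely up to isomorphism, to an $A_\oplus^\natural$-module, and the analogous uniqueness statement applied to natural transformations shows that the restriction functor $\iota_A^*\colon\Mod(A_\oplus^\natural)\to\Mod A$ is an equivalence of categories (alternatively, one may identify $A_\oplus^\natural$ with $\proj A\subseteq\Mod A$ via Lemma \ref{lemma:saturation_vs_P} and invoke the standard fact that $\Mod(\proj A)\simeq\Mod A$). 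Consequently its left adjoint $(\iota_A)_!$ is an equivalence as well.

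Functoriality of the saturation construction yields the identity $\iota_B\circ F=F_\oplus^\natural\circ\iota_A$ in $\Cat_\K$, and passage to extensions of scalars produces a natural isomorphism $(\iota_B)_!\circ F_!\simeq(F_\oplus^\natural)_!\circ(\iota_A)_!$. Since $(\iota_A)_!$ and $(\iota_B)_!$ are equivalences, the two-out-of-three property for equivalences shows that $F$ is a Morita equivalence if and only if $F_\oplus^\natural$ is.

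To finish, we observe that any Morita equivalence $G\colon C\to D$ between saturated $\K$-categories is automatically a $\K$-equivalence: by construction, $G_!$ sends each representable $h_C(x)$ to $h_D(Gx)$, so the hypothesis that $G_!$ be an equivalence forces it to restrict to an equivalence $\proj C\stackrel{\sim}{\to}\proj D$; by Lemma \ref{lemma:saturation_vs_P} and the saturation of $C$ and $D$, this restriction corresponds (up to isomorphism) to $G$ itself. Applying this observation to $G=F_\oplus^\natural$ closes the argument. The main obstacle is the ``Cauchy completion'' step of the second paragraph, where one must coherently extend not only $A$-modules but also their morphisms along $\iota_A$; this is precisely where Lemma \ref{lemma:unique_extension} earns its keep.
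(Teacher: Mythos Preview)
Your proof is correct. The underlying ideas overlap with the paper's, but your organization is genuinely different and arguably cleaner.

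The paper works directly with the commutative ladder
\[
\xymatrix@R=12pt{
A \ar[d] \ar[r]^-F & B \ar[d] \\
\mathsf P(A) \ar[d] \ar[r]^{F'} & \mathsf P(B) \ar[d] \\
\Mod A \ar[r]^{F''} & \Mod B
}
\]
identifying $F'$ with $F_\oplus^\natural$ via Lemma~\ref{lemma:saturation_vs_P}, and then proves in one breath that $F''$ is an equivalence iff $F'$ is: one direction because an equivalence $\Mod A\simeq\Mod B$ must restrict to the finitely generated projectives, the other by choosing a quasi-inverse to $F'$, Kan-extending it, and invoking uniqueness of Kan extensions. Your route instead isolates the intermediate statement that $\iota_A\colon A\to A_\oplus^\natural$ is itself a Morita equivalence, and then finishes with two-out-of-three plus the observation that a Morita equivalence between saturated categories is a $\K$-equivalence. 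This modularization pays off later in the paper, where the fact that $\iota_A$ is a Morita equivalence is used again (e.g.\ for the fibrant replacement in Corollary~\ref{cor:Mor_fibrant_replacement}); you have essentially front-loaded that lemma.

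One small remark: your appeal to Lemma~\ref{lemma:unique_extension} for the full faithfulness of $\iota_A^*$ (``the analogous uniqueness statement applied to natural transformations'') is not literally covered by that lemma as stated, which speaks only of functors. The argument is of course routine---a natural transformation between additive functors is determined on an additively generating subcategory---but if you want to avoid this wrinkle entirely, your parenthetical alternative via $\Mod(\proj A)\simeq\Mod A$ is the cleaner way to go.
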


\begin{proof}
The equivalence between the two descriptions of $\K$-equivalence follows from Lemma~\ref{lemma:first_char}. To prove the first description, consider the following diagram of $\K$-linear functors:
\[
\xymatrix@R=13pt{
A \ar[d] \ar[r]^-F & B \ar[d] \\
\mathsf P(A) \ar[d] \ar[r]^{F'} & \mathsf P(B) \ar[d] \\
\Mod A \ar[r]^{F''} & \Mod B
}
\] 
The vertical arrows are the inclusions; $F'$ is the unique extension of $F$ commuting with direct sums and retracts, i.e., the functor identifying with $F^\natural_\oplus$ under the natural equivalence of the previous lemma; and $F''$ is the left Kan extension of $A\to \Mod B$ along $A\to \Mod A$, and also of $\mathsf P(A)\to \Mod B$ along $\mathsf P(A)\to \Mod A$ (see \cite{maclane}*{\S X.3}). Since the functors $A\to \mathsf P(A)$ and $\mathsf P(A)\to \Mod A$ are fully faithful, the diagram commutes up to isomorphism.
We claim that $F''$ is an equivalence if and only if $F'$ is; then the proposition will follow by  Lemma~\ref{lemma:saturation_vs_P}.
In one direction, if $F''$ is an equivalence then it must restrict to an equivalence $\mathsf P(A)\stackrel{\sim}{\to} \mathsf P(B)$, because the properties of being finitely generated and projective are categorical; hence $F'$ is an equivalence. In the other direction, if $F'$ is an equivalence then it has a quasi-inverse $G'$, which will induce its own left Kan extension $G''\colon \Mod B\to \Mod A$. The uniqueness property of left Kan extensions now shows that $F''$ and $G''$ are mutually quasi-inverse equivalences.
\end{proof}

\begin{cor}\label{cor:sat_Morita_eq}
If $D$ is a (not necessarily small) saturated $\K$-category, then the functor $\Fun_\K(-,D)$ sends Morita equivalences to $\K$-equivalences.
\end{cor}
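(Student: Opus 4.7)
The plan is to reduce the corollary to the following sublemma and then combine it with Proposition~\ref{prop:satu_equiv}. The sublemma will assert that \emph{for every $C \in \Cat_\K$ and every (not necessarily small) saturated $\K$-category $D$, the restriction $\iota_C^*\colon \Fun_\K(C_\oplus^\natural, D) \to \Fun_\K(C, D)$ along $\iota_C\colon C \to C_\oplus^\natural$ is a $\K$-equivalence of $\K$-categories.}

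Granted this sublemma, I would conclude as follows. Starting from a Morita equivalence $F\colon A \to B$ and exploiting the evident naturality square
\[
\xymatrix{
A \ar[r]^-F \ar[d]_-{\iota_A} & B \ar[d]^-{\iota_B} \\
A_\oplus^\natural \ar[r]^-{F_\oplus^\natural} & B_\oplus^\natural,
}
\]
I would apply $\Fun_\K(-,D)$ to obtain a commutative square of $\K$-functors. By Proposition~\ref{prop:satu_equiv}, $F_\oplus^\natural$ is a $\K$-equivalence, hence so is the induced $(F_\oplus^\natural)^*$; by the sublemma the restrictions $\iota_A^*$ and $\iota_B^*$ are also $\K$-equivalences. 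The 2-out-of-3 property then forces $F^* = \Fun_\K(F,D)$ to be a $\K$-equivalence, which is what we want.

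To prove the sublemma, I would first treat essential surjectivity, which drops out of Lemma~\ref{lemma:unique_extension} applied to $\iota_C\colon C \to C_\oplus^\natural$: this embedding is fully faithful with image additively generating $C_\oplus^\natural$ by construction, so every $G\colon C \to D$ admits an extension $\tilde G\colon C_\oplus^\natural \to D$ with $\tilde G\,\iota_C \cong G$. For fully faithfulness, the key point is that $\K$-linearity forces any $\tilde G\colon C_\oplus^\natural \to D$ to send the formal biproduct and retract structures of $C_\oplus^\natural$ to genuine biproducts and splittings in $D$ (which exist because $D$ is saturated): for $y = (x_1 \cdots x_n, e) \in C_\oplus^\natural$, the value $\tilde G(y)$ is canonically the image of the idempotent $\tilde G(e)$ acting on the biproduct $\tilde G(x_1) \oplus \cdots \oplus \tilde G(x_n)$ in $D$. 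Using this, I would extend any natural transformation $\alpha\colon \tilde G_1\iota_C \to \tilde G_2\iota_C$ to a $\tilde\alpha\colon \tilde G_1 \to \tilde G_2$ by a matrix-cum-retract formula built from the components $\alpha_{x_i}$, and naturality on the matrix morphisms $[a_{ij}]$ of $C_\oplus^\natural$ will force $\tilde\alpha$ to be the unique such extension.

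The hard part will be the fully faithfulness step of the sublemma: verifying that the proposed $\tilde\alpha$ is well-defined (independently of how objects are presented as formal matrices), natural on all of $C_\oplus^\natural$, and $\K$-linear in $\alpha$. This reduces to a routine but tedious diagram chase with the biproduct and retract identities in $D$; no new ideas beyond the preservation of biproducts and idempotent splittings noted above should be needed.
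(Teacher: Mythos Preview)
Your argument is correct, but it takes a different route from the paper's. The paper works directly with the given Morita equivalence $F\colon A\to B$: from Proposition~\ref{prop:satu_equiv} it extracts that $F$ is fully faithful and that its image already additively generates~$B$ (the ``a fortiori'' step), then observes that full faithfulness of $F$ immediately yields full faithfulness of $\Fun_\K(F,D)$, while Lemma~\ref{lemma:unique_extension} applied to $F$ itself gives essential surjectivity. No passage through the saturations $A_\oplus^\natural$, $B_\oplus^\natural$ is needed.

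Your approach instead isolates the special case $F=\iota_C$ as a standalone sublemma and then deduces the general case by the $2$-out-of-$3$ property in the naturality square. This is slightly longer, since your ``hard part'' (full faithfulness of $\iota_C^*$) is exactly the same verification the paper absorbs into the one-line ``one checks easily that $\Fun_\K(F,D)$ is fully faithful'', just specialized to $\iota_C$. On the other hand, your sublemma is a clean reusable statement---indeed the paper invokes precisely this fact later (the equivalences $(\iota_A)^*$, $(\iota_B)^*$ in the proof of Lemma~\ref{lemma:Mor_eq_eq})---and your route sidesteps the need to argue that the image of $F$ additively generates $B$ rather than only $B_\oplus^\natural$.
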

\begin{proof}
Let $F\colon A\to B$ be a Morita equivalence. By Proposition~\ref{prop:satu_equiv}, $F$ is fully faithful and the image of $\iota_B \circ F$ additively generates $B^\natural_\oplus$; \emph{a fortiori}, the image of $F$ additively generates~$B$. Since $F$ is fully faithful, one checks easily that the $\K$-linear functor $\Fun_\K(F,D)\colon \Fun_\K(B,D)\to \Fun_\K(A,D)$ is fully faithful. Since $D$ is saturated and the image of $F$ additively generates~$B$, it follows from Lemma \ref{lemma:unique_extension} that $\Fun_\K(F,D)$ is essentially surjective. Therefore it is a $\K$-linear equivalence.
\end{proof}

\begin{defi}\label{defi:Mor_model}
Define the Morita model structure on $\Cat_\K$ to be $\cM_\Mor := L_S \cM_\can$, the left Bousfield localization of the canonical model structure of Theorem~\ref{thm:can} with respect to the set $S:=\{R_0, R_1, S_2\}$ consisting of the following three $\K$-linear functors:
 \begin{itemize}
 \item[(i)]

 $R_0\colon \emptyset \to 0$ is the unique functor from the initial to the final object, i.e., from the empty to the zero $\K$-category.
 \item[(ii)]
$R_1\colon E(1)\to R(1)$ is the universal addition of a retract. More precisely, let $E(1)$ be the $\K$-category generated by one object~$o$ equipped with an idempotent endomorphism $e=e^2\colon o\to o$, and let $R(1)$ be the $\K$-category generated by two objects $o$ and~$r$, two arrows $p\colon o\to r$ and $i\colon r\to o$, and the relation $pi=1_r$.
Then $R_1$ is the unique (fully faithful) functor sending $e$ to the idempotent~$ip$.

\item[(iii)]
$S_2\colon \K\sqcup \K \to S(2)$ is the universal addition of a direct sum. More precisely, $S(2)$ is generated by three objects $o_1,o_2$ and~$s$, arrows $i_k \colon o_k \to s$ and $p_k \colon s\to o_k$ ($k =1,2$), and relations $p_ki_k= 1_{o_k}$ ($k=1,2$) and $i_1p_1+ i_2p_2=1_s$.
Then $S_2$ is the unique (fully faithful) $\K$-linear functor $\K\sqcup \K\to S(2)$ sending the first copy of $\K$ to $o_1$ and the second copy to~$o_2$.
\end{itemize}
The left Bousfield localization, and therefore the Morita model structure, is well-defined because the canonical model $\cM_\can$ is left proper (since all objects are cofibrant) and because by item (iii) of Theorem \ref{thm:can} it is combinatorial; see \cites{barwick, beke:sheafifiable}. Moreover, $\cM_\Mor$ ihnerits the property of being combinatorial.
Since $\cM_\can$ is symmetric monoidal we can use here the \emph{$\cM_\can$-enriched} version of Bousfield localization (see~\cite{barwick}) rather than the more common simplicial version (the result is the same).
\end{defi}
Let us recall what this all means, in the situation at hand. An object $D\in \Cat_\K$ is \emph{$S$-local} if for every $(F\colon A\to B)\in S$, the induced map
\begin{equation}\label{eq:induced_S}
F^*=\Hom_{\Ho(\cM_\can)}(F,D) \colon\Hom_{\Ho(\cM_\can)}(B,D)\to \Hom_{\Ho(\cM_\can)}(A,D)
\end{equation}
is a bijection. 
A $\K$-linear functor $F\colon A\to B$ is an \emph{$S$-local equivalence} if, conversely, \eqref{eq:induced_S} is a bijection for every $S$-local object~$D$. By definition, the weak equivalences of $\cM_\Mor$ are precisely the $S$-local equivalences, the cofibrations are the same as those of $\cM_\can$, and the fibrations are determined as usual by the right lifting property with respect to the trivial cofibrations.
By the theory, the Morita fibrant objects (i.e.\ those objects that are fibrant for the Morita model) are precisely the $S$-local ones. 

\begin{notation}
By default, $\Ho(\Cat_\K)$ always refers to $\Ho(\cM_\Mor)$.
\end{notation}

\begin{lemma}
\label{lemma:Mor_fibrant}
A $\K$-category is Morita fibrant if and only if it is saturated (see \S\ref{sec:preliminaries}).
\end{lemma}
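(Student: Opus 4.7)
The plan is to combine Corollary~\ref{cor:Ho(M_can)}, which identifies $\Hom$-sets in $\Ho(\cM_\can)$ with isomorphism classes of $\K$-functors, with a case-by-case analysis of the three generators $R_0$, $R_1$, $S_2$. Since saturation is the conjunction of three elementary properties (existence of a zero object, existence of binary direct sums, and idempotent completeness), the natural strategy is to show that each generator in $S$ controls exactly one of these three properties.

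First I would observe that, by Corollary~\ref{cor:Ho(M_can)}, an object $D\in \Cat_\K$ is $S$-local if and only if for every $F\colon A\to B$ in $S$ and every $\K$-functor $G\colon A\to D$, there exists an extension $\tilde G\colon B\to D$ with $\tilde G\circ F\simeq G$, unique up to $\K$-linear natural isomorphism. In other words, the bijection \eqref{eq:induced_S} in the definition of $S$-local unravels into a unique extension property at the level of honest $\K$-functors.

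Next I would translate each of the three extension conditions into familiar categorical structure on $D$. For $R_0\colon \emptyset\to 0$ the unique $\K$-functor $\emptyset \to D$ extends to $0\to D$ if and only if $D$ contains a zero object (an object $x$ with $1_x=0$), and any two such objects are canonically isomorphic. For $R_1\colon E(1)\to R(1)$ an isomorphism class of $\K$-functors $E(1)\to D$ amounts to an isomorphism class of pairs $(x,e)$ with $e$ an idempotent endomorphism of $x$, while an isomorphism class of $\K$-functors $R(1)\to D$ amounts to an isomorphism class of splittings of such an idempotent; since $R_1$ sends $e$ to $ip$, $R_1$-locality is equivalent to the statement that every idempotent in $D$ splits, the uniqueness up to isomorphism being automatic from the universal property of splittings. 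For $S_2\colon \K\sqcup \K\to S(2)$ an isomorphism class of $\K$-functors $\K\sqcup \K\to D$ is just an ordered pair of objects of $D$, while an isomorphism class of $\K$-functors $S(2)\to D$ is precisely an isomorphism class of direct-sum cones on such a pair; hence $S_2$-locality is equivalent to the existence of binary direct sums in $D$.

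Assembling the three equivalences, $D$ is Morita fibrant if and only if $D$ admits a zero object, binary direct sums (hence all finite direct sums by induction), and splittings of all idempotents --- that is, if and only if $D$ is saturated. The main obstacle is not logical but bookkeeping: one has to verify carefully in each of the three cases that natural isomorphisms of $\K$-functors out of $E(1)$, $R(1)$, $\K\sqcup \K$, and $S(2)$ correspond exactly to the standard notion of isomorphism of the underlying data (objects with idempotents, splittings, and direct-sum diagrams). Once that identification is made, both directions of the biconditional follow at once from the universal properties defining these structures in any $\K$-category.
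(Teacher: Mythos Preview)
The proposal is correct and follows essentially the same approach as the paper: both invoke Corollary~\ref{cor:Ho(M_can)} to reduce $S$-locality to a case-by-case extension problem for the three generators $R_0,R_1,S_2$, and then identify each case with one of the three ingredients of saturation (zero object, idempotent splitting, binary direct sums). The only cosmetic difference is that the paper interposes the strict right lifting property against $S$ as an intermediate equivalent condition, whereas you work directly with extensions up to natural isomorphism.
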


\begin{proof}
Consider the following three lifting problems for an object $D$ in $\Cat_\K$:
\begin{equation}\label{eq:liftings_S}
\xymatrix{ \emptyset \ar[d]_{R_0} \ar[d] \ar[r] & D \\ 0 \ar@{..>}[ur] }
\quad \quad
\xymatrix{ E(1) \ar[d]_{R_1} \ar[d] \ar[r]^-G & D \\ R(1) \ar@{..>}[ur] }
\quad \quad
\xymatrix{ \K\sqcup \K \ar[d]_{S_2} \ar[d] \ar[r]^-H & D \\ S(2) \ar@{..>}[ur] }
\end{equation}
Exactly as in the proof of \cite{CcatMorita}*{Proposition 4.24}, it is easy to verify from the definitions that the unique functor $\emptyset \to D$ lifts along $R_0$ if and only if $D$ has a zero object; 
that every $G$ as above lifts along $R_1$ if and only if every idempotent of $D$ splits; and that every $H$ as above lifts along $S_2$ if and only if any two objects of $D$ have a direct sum.
Hence, $D$ has the right lifting property with respect to the set $S$ precisely when it is saturated.
Now we must show that $D$ has the right lifting property with respect to $S$ if and only if it is $S$-local.
Since the three maps in $S$ are trivial fibration, of course $S$-locality (i.e.\ Morita fibrancy) implies the lifting propery. So it only remains to prove the converse. 

Let $D$ have the right lifting property with respect to each~$F\in S$. Note that, by the uniqueness of zero objects, retracts, and direct sums, the resulting liftings in \eqref{eq:liftings_S} are unique up to a canonical isomorphism of $\K$-functors. This implies that for any $(F\colon A\to B)\in S$ the induced $\K$-functor
\[
F^*=\Fun_\K(F,D)\colon \Fun_\K(B,D) \longrightarrow \Fun_\K(A,D)
\]
is essentially surjective. Similarly, $F^*$ is easily seen to be fully faithful so it is an equivalence. By considering isomorphism classes of objects, we deduce with Corollary \ref{cor:Ho(M_can)} that \eqref{eq:induced_S} is a bijection. Thus $D$ is $S$-local, as claimed.
\end{proof}

\begin{lemma}
\label{lemma:Mor_eq_eq}
The weak equivalences in the Morita model structure are precisely the Morita equivalences in the sense of Definition~\ref{defi:Morita_eq}.
\end{lemma}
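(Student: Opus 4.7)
My plan is to prove the two inclusions separately, exploiting both the results established earlier in this section (notably Proposition~\ref{prop:satu_equiv} and Corollary~\ref{cor:sat_Morita_eq}) and a general feature of left Bousfield localization.

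First, I will show that any Morita equivalence $F\colon A\to B$ is an $S$-local equivalence. By Lemma~\ref{lemma:Mor_fibrant}, it suffices to verify that the induced map~\eqref{eq:induced_S} is a bijection for every \emph{saturated} $\K$-category $D$. Corollary~\ref{cor:sat_Morita_eq} guarantees that $\Fun_\K(F,D)\colon \Fun_\K(B,D)\to \Fun_\K(A,D)$ is a $\K$-equivalence of categories, and by Corollary~\ref{cor:Ho(M_can)} each side of~\eqref{eq:induced_S} is just the set of isomorphism classes of objects of $\Fun_\K(-,D)$. Since an equivalence of categories induces a bijection of isomorphism classes of objects, the desired bijection follows.

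For the converse, suppose $F\colon A\to B$ is a weak equivalence in $\cM_\Mor$. I will reduce the problem to showing that $F^\natural_\oplus\colon A^\natural_\oplus\to B^\natural_\oplus$ is a $\K$-equivalence, which by Proposition~\ref{prop:satu_equiv} is exactly the condition of being a Morita equivalence. The canonical embeddings $\iota_A\colon A\to A^\natural_\oplus$ and $\iota_B\colon B\to B^\natural_\oplus$ are fully faithful with image additively generating the target, and hence are Morita equivalences by Proposition~\ref{prop:satu_equiv}; by the implication just proved, they are also $S$-local equivalences. From the naturality identity $F^\natural_\oplus\circ \iota_A=\iota_B\circ F$ and the two-out-of-three property for weak equivalences in $\cM_\Mor$, one deduces that $F^\natural_\oplus$ is itself a weak equivalence in $\cM_\Mor$.

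The most delicate step, and the step I expect to be the main obstacle, is to upgrade this last statement to the assertion that $F^\natural_\oplus$ is actually a $\K$-equivalence. Here I will appeal to a standard general property of left Bousfield localizations (see, e.g., \cite{barwick} or \cite{hirschhorn}): a weak equivalence of $L_S\cM$ between two $S$-local objects is automatically a weak equivalence of~$\cM$. Since $A^\natural_\oplus$ and $B^\natural_\oplus$ are saturated, Lemma~\ref{lemma:Mor_fibrant} ensures they are $S$-local, so the cited principle applies to $F^\natural_\oplus$. Once invoked, Proposition~\ref{prop:satu_equiv} delivers that $F$ is a Morita equivalence, completing the argument.
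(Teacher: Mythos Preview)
Your proof is correct and follows essentially the same strategy as the paper's: reduce to showing that $F^\natural_\oplus$ is a $\K$-equivalence, and use that an $S$-local equivalence between $S$-local objects is already a weak equivalence in~$\cM_\can$. The only notable difference is in how you reach the intermediate claim that $F^\natural_\oplus$ is an $S$-local equivalence in the converse direction: you argue directly via two-out-of-three along the naturality square $F^\natural_\oplus\circ\iota_A=\iota_B\circ F$, whereas the paper runs the commutative square of functor categories, deduces that $(F^\natural_\oplus)^*$ is an equivalence for all saturated~$D$, and then invokes Yoneda together with the fibrant replacement $\iota$ to conclude that $F^\natural_\oplus$ is invertible in $\Ho(\Cat_\K)$. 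Your route is a bit more economical; the paper's has the side benefit of simultaneously establishing Corollary~\ref{cor:Mor_fibrant_replacement} within the same argument.
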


\begin{proof}
Consider a $\K$-linear functor $F\colon A\to B$. 
It is a weak equivalence of the Morita model structure if and only if it is an $S$-local equivalence, i.e.\ by Corollary \ref{cor:Ho(M_can)} and Lemma~\ref{lemma:Mor_fibrant}, if and only if the induced functor $F^*\colon \Fun_\K(B,D)\to \Fun_\K(A,D)$ is an equivalence of categories for all saturated~$D$. Consider the following commutative diagram:
\[
\xymatrix{
\Fun_\K(B,D) \ar[r]^-{F^*} &
 \Fun_\K(A,D) \\
\Fun_\K(B^\natural_\oplus, D) \ar[u]^{(\iota_B)^*}_\simeq \ar[r]^-{(F^\natural_\oplus)^*} &
 \Fun_\K(A^\natural_\oplus, D) \ar[u]_{(\iota_A)^*}^\simeq
}
\]
By Corollary \ref{cor:sat_Morita_eq} and since $\iota_A$ and $\iota_B$ are Morita equivalences, $(\iota_A)^*$ and $(\iota_B)^*$ are equivalences.
If $F$ is a Morita equivalence then $\smash{F^\natural_\oplus}$ is an equivalence (by Proposition~\ref{prop:satu_equiv} again), hence so is~$\smash{(F^\natural_\oplus)^*}$, and it follows from the commutative diagram that $F^*$ is an equivalence. This shows that Morita equivalences are $S$-local equivalences. 
Together with Lemma~\ref{lemma:Mor_fibrant}, this also proves Corollary \ref{cor:Mor_fibrant_replacement} below. 

Conversely, assume that $F$ is an $S$-local equivalence; thus in the diagram $F^*$ is an equivalence for all saturated~$D$. Hence $(F^\natural_\oplus)^*$ is an equivalence too.
By Corollary \ref{cor:Mor_fibrant_replacement}, and since every object is Morita cofibrant, this is the same as saying that $\Hom_{\Ho(\Cat_\K)}(F^\natural_\oplus,C)$ is bijective for all $C\in \Cat_\K$. By Yoneda, $F^\natural_\oplus$ is invertible in $\Ho(\Cat_\K)$, i.e.\ it is an $S$-local equivalence. But its domain and codomain are Morita fibrant, hence $\smash{F^\natural_\oplus}$ is already a canonical weak equivalence, i.e.\ an equivalence of $\K$-categories. In other words, $F$ is a Morita equivalence.
\end{proof}

\begin{cor}
\label{cor:Mor_fibrant_replacement}
The natural embedding $\iota_A\colon A\to A^\natural_\oplus$ provides a functorial fibrant replacement for the Morita model structure. 
\qed
\end{cor}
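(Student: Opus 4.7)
The plan is to combine the results proved so far in a direct way. First I would check that $A^\natural_\oplus$ is Morita fibrant: by construction $A^\natural_\oplus$ admits all finite direct sums and has split idempotents, so it is saturated in the sense of Section~\ref{sec:preliminaries}; Lemma~\ref{lemma:Mor_fibrant} then immediately gives that $A^\natural_\oplus$ is fibrant in $\cM_\Mor$.

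Next I would verify that $\iota_A\colon A\to A^\natural_\oplus$ is a Morita equivalence, hence a weak equivalence of $\cM_\Mor$ by (the part of) Lemma~\ref{lemma:Mor_eq_eq} that has already been established in the excerpt (the first half of its proof shows precisely that Morita equivalences are $S$-local equivalences, and crucially does not use the corollary we are proving). By Proposition~\ref{prop:satu_equiv}, it suffices to check that $\iota_A$ is fully faithful and that the image of $\iota_{A^\natural_\oplus}\circ \iota_A$ additively generates $(A^\natural_\oplus)^\natural_\oplus$. Fully faithfulness is immediate from the explicit descriptions of $\sigma$ and~$\tau$. For the generation property, since $A^\natural_\oplus$ is already saturated the object set of $\iota_A(A)$ additively generates $A^\natural_\oplus$ by the very definition of the additive hull and idempotent completion, and composing with the embedding $\iota_{A^\natural_\oplus}$ (which again by saturation is a $\K$-equivalence onto its image) preserves this property.

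Finally, functoriality of the fibrant replacement is free of charge: $(-)^\natural_\oplus\colon \Cat_\K\to \Cat_\K$ is a functor by construction, and the embeddings $\iota_A=\tau_{A_\oplus}\circ \sigma_A$ assemble into a natural transformation $\iota\colon \Id_{\Cat_\K}\Rightarrow (-)^\natural_\oplus$, so that for any $\K$-linear functor $F\colon A\to B$ the square $\iota_B\circ F = F^\natural_\oplus\circ \iota_A$ commutes. There is no real obstacle here; the only subtle point is to confirm that the portion of the proof of Lemma~\ref{lemma:Mor_eq_eq} used above (Morita equivalences are $S$-local equivalences) is logically independent of Corollary~\ref{cor:Mor_fibrant_replacement}, as the authors' own parenthetical remark indicates.
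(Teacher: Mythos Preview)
Your proposal is correct and follows essentially the same route as the paper. The paper does not give a separate proof of the corollary; instead, inside the proof of Lemma~\ref{lemma:Mor_eq_eq} it observes that the implication ``Morita equivalences are $S$-local equivalences'' together with Lemma~\ref{lemma:Mor_fibrant} (and the evident fact that $\iota_A$ is a Morita equivalence into a saturated target) yields Corollary~\ref{cor:Mor_fibrant_replacement}. You have unpacked precisely this argument, including the correct observation about the logical ordering between the first half of Lemma~\ref{lemma:Mor_eq_eq} and the corollary.
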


We now obtain an explicit description of the Morita homotopy category.

\begin{prop}
\label{prop:Mor_homs}
For any two $A,B\in \Cat_\K$, there is a canonical bijection between maps $\varphi\colon A\to B$ in $ \Ho(\Cat_\K)$ and isomorphism classes of $\K$-linear functors $F\colon A\to B^\natural_\oplus$, obtained by sending the isomorphism class of $F$ to the equivalence class of the left fraction $\iota_B^{-1} \circ F $. 
The map $\varphi$ is invertible if and only $F$ is a Morita equivalence. 
If $\varphi\colon A\to B$ and $\psi\colon B\to C$ are represented by $F$ and $G$, respectively, then their composite $\psi \circ \varphi$ is represented by $\tilde G\circ F$, where $\tilde G\colon B^\natural_\oplus\to C^\natural_\oplus$ is the (up to isomorphism, unique) $\K$-linear extension of $G$ along $\iota_B\colon B\to B^\natural_\oplus$.
\end{prop}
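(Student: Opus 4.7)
The plan is to apply the standard fibrant--cofibrant replacement formula for morphisms in the homotopy category of a model category. Since the cofibrations of $\cM_\Mor$ coincide with those of $\cM_\can$, every object is Morita-cofibrant; by Corollary~\ref{cor:Mor_fibrant_replacement}, $\iota_B\colon B\to B^\natural_\oplus$ is a Morita fibrant replacement. The general theory then yields
\[
\Hom_{\Ho(\Cat_\K)}(A,B) \;=\; \pi_\Mor(A,B^\natural_\oplus),
\]
where $\pi_\Mor$ denotes homotopy classes in $\cM_\Mor$, and each representative $F\colon A\to B^\natural_\oplus$ realizes the corresponding morphism precisely as the left fraction $\iota_B^{-1}\circ F$.

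The first key step is to identify $\pi_\Mor(A,B^\natural_\oplus)$ with isomorphism classes of $\K$-linear functors $F\colon A\to B^\natural_\oplus$. For this I would show that the canonical cylinder $A\otimes \mathrm F_\K \mathbf I$ of Remark~\ref{rem:cylinder} remains a cylinder object in the Morita model: the map $A\sqcup A\to A\otimes \mathrm F_\K \mathbf I$ is injective on objects, hence a Morita cofibration, while $A\otimes \mathrm F_\K\mathbf I\to A$ is a canonical weak equivalence and therefore a Morita weak equivalence, since $\cM_\Mor$ is a left Bousfield localization of $\cM_\can$. Thus Morita homotopy computed via this cylinder coincides with canonical homotopy computed via the same cylinder, which by Corollary~\ref{cor:Ho(M_can)} is exactly isomorphism of $\K$-linear functors. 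This is the main obstacle of the proof; once established, the remaining assertions follow formally.

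For the invertibility statement, $\varphi = \iota_B^{-1}\circ F$ is an isomorphism in $\Ho(\Cat_\K)$ if and only if $F$ is an isomorphism there, equivalently an $S$-local equivalence, equivalently a Morita equivalence by Lemma~\ref{lemma:Mor_eq_eq}.

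Finally, for the composition rule, write $\varphi = \iota_B^{-1}\circ F$ and $\psi = \iota_C^{-1}\circ G$; the issue is to rewrite $G\circ \iota_B^{-1}$ as a left fraction. Since $\iota_B$ is fully faithful with image additively generating the saturated category $B^\natural_\oplus$, Lemma~\ref{lemma:unique_extension} produces a $\K$-linear extension $\tilde G\colon B^\natural_\oplus\to C^\natural_\oplus$ with $\tilde G\circ \iota_B\cong G$, unique up to isomorphism. In $\Ho(\Cat_\K)$ this gives $\tilde G = G\circ \iota_B^{-1}$, so
\[
\psi\circ \varphi \;=\; \iota_C^{-1}\circ G\circ \iota_B^{-1}\circ F \;=\; \iota_C^{-1}\circ \tilde G\circ F,
\]
which under the bijection corresponds to the isomorphism class of $\tilde G\circ F$, as claimed.
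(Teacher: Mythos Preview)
Your proof is correct and follows essentially the same route as the paper: use that every object is cofibrant and $\iota_B$ is a Morita fibrant replacement to reduce to homotopy classes, then identify these with isomorphism classes of $\K$-functors. The paper compresses your cylinder argument into a bare citation of Corollary~\ref{cor:Ho(M_can)}, and obtains the extension $\tilde G$ by appealing to the naturality of $\iota$ together with the idempotence of saturation rather than Lemma~\ref{lemma:unique_extension}, but these are only presentational differences.
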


\begin{proof}
By Corollary \ref{cor:Mor_fibrant_replacement}, and since every object is cofibrant, every map $\varphi\colon A\to B$ in the homotopy category is the equivalence class of some fraction $\smash{A \stackrel{F}{\to } B^\natural_\oplus \stackrel{\iota}{\leftarrow} B}$.
Two $\K$-linear functors $\smash{F,F'\colon A\to B^\natural_\oplus}$ represent the same map precisely when they are isomorphic, as claimed, by Corollary \ref{cor:Ho(M_can)}.
The other claims now follow immediately from the naturality of $\iota$ and the fact that saturation is idempotent up to equivalence
(cf.\ \cite{CcatMorita}*{Proposition~4.27}). 
\end{proof}

\begin{prop}\label{prop:monoidal}
The Morita model category, endowed with its closed symmetric monoidal structure (see \S\ref{sec:preliminaries}), is a symmetric monoidal model category.
It follows in particular that $\Ho(\Cat_\K)$ is a closed symmetric monoidal category. 
\end{prop}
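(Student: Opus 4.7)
The plan is to exploit the fact that $\cM_\Mor = L_S\cM_\can$ is a left Bousfield localization of the symmetric monoidal model category $\cM_\can$ (Theorem~\ref{thm:can}(ii)) and that the two have the same cofibrations. In particular, the pushout-product of two cofibrations is already a cofibration in $\cM_\Mor$, and the unit axiom is automatic because the tensor unit $\K$ is cofibrant in $\cM_\can$. Thus the only non-trivial point is to verify that the pushout-product of a cofibration with a Morita trivial cofibration is again a Morita equivalence.

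The key step will be the claim that tensoring with an arbitrary $\K$-category preserves Morita equivalences: if $F\colon A\to B$ is a Morita equivalence and $X\in\Cat_\K$, then so is $F\otimes X\colon A\otimes X\to B\otimes X$. To prove this, I would let $D$ be any (not necessarily small) saturated $\K$-category; the closed monoidal structure yields a natural isomorphism
\[
\Fun_\K(F\otimes X,\,D)\;\cong\;\Fun_\K\bigl(F,\,\Fun_\K(X,D)\bigr),
\]
and since finite direct sums and idempotent splittings in $\Fun_\K(X,D)$ are computed objectwise in~$D$, the internal hom $\Fun_\K(X,D)$ is again saturated. Corollary~\ref{cor:sat_Morita_eq} then applies to show that the right-hand side is a $\K$-equivalence, hence so is the left-hand side; passing to isomorphism classes via Corollary~\ref{cor:Ho(M_can)} together with Lemmas~\ref{lemma:Mor_fibrant} and~\ref{lemma:Mor_eq_eq}, this precisely means that $F\otimes X$ is an $S$-local equivalence, i.e.\ a Morita equivalence.

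Granted the claim, the pushout-product axiom follows by a short argument: given a Morita trivial cofibration $f\colon A\to B$ and a cofibration $g\colon C\to D$, the maps $f\otimes C$ and $f\otimes D$ are Morita trivial cofibrations by the claim, and trivial cofibrations are preserved by cobase change in any model category; hence the pushout of $f\otimes C$ along $A\otimes g$ is a Morita trivial cofibration, and two-out-of-three applied to its composition with the pushout-product $f\Box g$ (which composes to $f\otimes D$) forces $f\Box g$ to be a Morita equivalence, while it is a cofibration by Theorem~\ref{thm:can}(ii). The closed symmetric monoidal structure on $\Ho(\Cat_\K)$ then follows formally from~\cite{hovey:model}*{ch.~4}. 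The main obstacle is really the key claim, which itself reduces to the elementary observation that $\Fun_\K(X,-)$ preserves saturated $\K$-categories; everything else is either built-in from $\cM_\can$ or a routine model-categorical manipulation.
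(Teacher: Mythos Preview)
Your proof is correct and follows essentially the same approach as the paper: both arguments hinge on the claim that $-\otimes X$ preserves Morita equivalences, proved via the tensor--Hom adjunction and the observation that $\Fun_\K(X,D)$ is saturated whenever $D$ is. The only difference is cosmetic: the paper defers the final pushout-product verification to \cite{CcatMorita}*{Proposition~6.3}, whereas you spell it out explicitly (and correctly).
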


\begin{remark}
\label{rem:Ho_tensor}
Note that, since every object is cofibrant, we do not need to derive the tensor product $-\otimes-$. However, we need to derive the internal Homs $\Fun_\K(-,-)$, and we can do this simply by saturating the target category.
\end{remark}

\begin{proof}
Let $F\colon A \to B$ be a $\K$-linear functor. We show first that $C\otimes F$ is a Morita equivalence whenever $F$ is a Morita equivalence and $C\in \Cat_\K$. Indeed, consider the commutative square in $\Cat_\K$
\[
\xymatrix{
\Fun_\K (C\otimes A, D) \ar[r]^-{\simeq} &
 \Fun_\K (A, \Fun_\K(C,D))  \\
\Fun_\K (C\otimes B, D) \ar[u]^{F^*} \ar[r]^-{\simeq} &
 \Fun_\K (B, \Fun_\K(C,D)) \ar[u]_{F^*}
}
\]
where $D$ is any saturated $\K$-category. Since $D$ is saturated so is $\Fun_\K(C,D)$; since $F$ is a Morita equivalence, the rightmost $F^*$ is then an equivalence. It follows that the leftmost $F^*$ is also one. Since $D$ is an arbitrary saturated $\K$-category, this proves that $C\otimes F$ is a Morita equivalence, as claimed (of course, for this argument we make implicit use of the characterizations of  Morita fibrant objects and Morita local equivalences of Lemmas \ref{lemma:Mor_fibrant} and \ref{lemma:Mor_eq_eq}). 
Now we can directly verify the definition of a symmetric monoidal model, precisely as in the proof of \cite{CcatMorita}*{Proposition 6.3}.
\end{proof}

\subsection*{Homotopy of $\K$-algebras}
Let $R$, $S$ and $T$ be $\K$-algebras, considered as objects in $\Cat_\K$. 
A bimodule ${}_RM_S$ which is finitely generated projective as a right $S$-module is the same data as a $\K$-linear functor $R\to \proj  S$. 
Two such bimodules ${}_RM_S$ and ${}_RM'_S$ are isomorphic as bimodules if and only if the corresponding $\K$-linear functors are isomorphic.
The $\K$-linear functor corresponding to the tensor product ${}_RM \otimes_SN_R$ identifies with the composition of $R\to \proj S$ with the canonical extension of $S \to \proj T$ along the embedding $S\to \proj S$. These facts combined with Proposition~\ref{prop:Mor_homs} furnish us with the following explicit description of the full subcategory of $\Ho(\Cat_\K)$ of $\K$-algebras, which (with a slight abuse of notation) we will denote by $\Ho(\Alg_\K)$. Namely, its objects are the $\K$-algebras. The Hom sets $ \Hom_{\Ho(\Alg_\K)}(R,S)$ are given by the isomorphism classes of the category $\mathrm{rep}(R,S)$ of those $R\text{-}S$-bimodules which are finitely generated projective as $S$-modules. Composition is given by 
\begin{eqnarray*}
\mathrm{Iso}\,\mathrm{rep}(S,T) \times \mathrm{Iso}\,\mathrm{rep}(R,S) \to \mathrm{Iso}\,\mathrm{rep}(R,T) && ([{}_SN_T],[{}_RM_S]) \mapsto [{}_RM\otimes_S N_T]
\end{eqnarray*}
and the tensor structure is induced by the tensor product of $\K$-algebras.
\section{Tensor categorification and base-change}\label{sec:tensor-categorification}
In this section we $\otimes$-categorify the Brauer group; see Corollary~\ref{cor:identification}. Then, we establish the base-change functoriality of this $\otimes$-categorification; see Corollary~\ref{cor:base-change}.
\subsection*{$\otimes$-categorification}
Let $\K$ be a commutative ring. Recall that Azumaya $\K$-algebras can be defined as those $\K$-algebras $A$ for which there exists a $\K$-algebra~$B$, a faithful finitely generated projective $\K$-module $P$, and an isomorphism $A\otimes B\simeq \End_\K(P)$ of $\K$-algebras (see \cite{knus-ojanguren}*{Theorem 5.1}). 
Recall also (e.g.\ from \cite{wisbauer-foundations}*{18.11}) that a finitely generated projective $\K$-module $P\in \proj \K$ is faithful precisely when it is a generator of $\Mod \K$, or equivalently, when it additively generates $\proj \K$.

\begin{prop}\label{prop:Azumaya}
The $\otimes$-invertible objects in $\Ho(\Cat_\K)$ are precisely the $\K$-cat\-e\-go\-ries which are Morita equivalent to Azumaya $\K$-algebras.
\end{prop}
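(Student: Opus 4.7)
Plan.

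For the easy $(\Leftarrow)$ direction, suppose $A$ is Azumaya with $A\otimes_\K B\simeq \End_\K(P)$ for some $\K$-algebra~$B$ and some faithful finitely generated projective $\K$-module~$P$. Then~$P$ is a progenerator of~$\Mod\K$, so $\End_\K(P)$ is Morita equivalent to~$\K$; hence $A\otimes B\simeq\K$ in $\Ho(\Cat_\K)$ by Theorem~\ref{thm:main1}, exhibiting~$A$ as $\otimes$-invertible with inverse~$B$. Since isomorphism in $\Ho(\Cat_\K)$ coincides with Morita equivalence, every $\K$-category Morita equivalent to~$A$ is also $\otimes$-invertible.

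For the substantive $(\Rightarrow)$ direction, let~$C$ be $\otimes$-invertible with inverse~$D$; by Corollary~\ref{cor:Mor_fibrant_replacement} I may assume $C,D$ saturated. Using Proposition~\ref{prop:Mor_homs}, I represent the iso $\epsilon\colon C\otimes D\stackrel{\sim}{\to}\K$ by a Morita equivalence $F\colon C\otimes D\to \proj\K$, $(c,d)\mapsto M_{c,d}$, and its (braided) inverse $\eta\colon \K\stackrel{\sim}{\to} D\otimes C$ by an object $P\in\proj(D\otimes C)\simeq\proj(C\otimes D)$ corresponding to~$\K$ under the induced $\K$-equivalence $(C\otimes D)^\natural_\oplus\simeq\proj\K$. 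Writing $P$ as a direct summand of $\bigoplus_{i=1}^m(c_i,d_i)$, I set $c:=\bigoplus_i c_i\in C$ and $d:=\bigoplus_i d_i\in D$. Since $(c,d)=\bigoplus_{i,j}(c_i,d_j)$ in $C\otimes D$, the diagonal $\bigoplus_i(c_i,d_i)$---hence~$P$---is a summand of~$(c,d)$; applying~$F$, the unit~$\K$ is a summand of $M:=F((c,d))\in\proj\K$, making~$M$ a faithful finitely generated projective $\K$-module, and the resulting ring isomorphism
\[
C(c,c)\otimes_\K D(d,d)\;\simeq\;\End_\K(M)
\]
qualifies $C(c,c)$ as an Azumaya $\K$-algebra by the very definition recalled above.

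The crux is to show that~$c$ additively generates~$C$; by Proposition~\ref{prop:satu_equiv} this will make the inclusion $C(c,c)\hookrightarrow C$ a Morita equivalence, so~$C$ will be Morita equivalent to the Azumaya algebra~$C(c,c)$ as required. My strategy is to exploit the triangle identity for the invertible object~$C$ (with dual~$D$) in the symmetric monoidal category $\Ho(\Cat_\K)$: the composite
\[
C\;\xrightarrow{\;\id_C\otimes\eta\;}\;C\otimes D\otimes C\;\xrightarrow{\;\epsilon\otimes\id_C\;}\;C
\]
must equal $\id_C$. Unwinding this composite into an explicit $\K$-linear functor $C\to\proj C$ via Proposition~\ref{prop:Mor_homs} and Remark~\ref{rem:Ho_tensor}, I obtain, for each~$c'\in C$, a natural isomorphism identifying the representable~$h_{c'}\in\proj C$ with a direct summand of $\bigoplus_{i=1}^m M_{c',d_i}\otimes_\K h_{c_i}$. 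Because each~$M_{c',d_i}$ is finitely generated projective over~$\K$ and hence a summand of some~$\K^{N_i}$, and because $c=\bigoplus_i c_i$ makes each~$h_{c_i}$ a summand of~$h_c$, it follows that~$h_{c'}$ is a summand of $h_c^N$ for $N:=\sum_i N_i$, i.e.\ $c'\mid c^N$ in~$C$.

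I expect the main obstacle to be this last unwinding: translating the abstract triangle identity in $\Ho(\Cat_\K)$ into the above concrete combinatorial statement in $\proj C$ requires careful tracking of the tensor factors and repeated use of Proposition~\ref{prop:Mor_homs}'s prescription for composites via extensions to saturations.
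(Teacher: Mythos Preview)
Your plan is sound and would yield a correct proof, but it takes a genuinely different route from the paper for the key step.

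\textbf{Comparison.} Both arguments begin identically: pick a Morita equivalence $F\colon C\otimes D\to \K^\natural_\oplus$, find $c\in C$ and $d\in D$ with $P:=F(c,d)$ an additive generator of $\proj\K$, and conclude that $C(c,c)\otimes D(d,d)\simeq\End_\K(P)$ is an Azumaya presentation. The divergence is in proving that the inclusion $I\colon C(c,c)\hookrightarrow C$ is a Morita equivalence. The paper does not unwind any triangle identity; instead it observes that the composite
\[
C(c,c)\otimes D \xrightarrow{\,I\otimes D\,} C\otimes D \xrightarrow{\,F\,} \K^\natural_\oplus
\]
is fully faithful with image containing the generator~$P$, hence is a Morita equivalence; by 2-out-of-3 so is $I\otimes D$, and since $-\otimes D$ is an auto-equivalence of $\Ho(\Cat_\K)$ (because $D$ is $\otimes$-invertible), $I$ itself must be an isomorphism there. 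This cancellation argument is two lines and sidesteps entirely the bookkeeping you anticipate. Your approach, by contrast, is more constructive: it produces for each $c'\in C$ an explicit embedding $c'\mid c^N$, which is extra information the paper's proof does not yield.

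\textbf{One wrinkle.} Your choice of $\eta$ as the braided inverse of $\epsilon$ need not satisfy the triangle identity on the nose: the composite $(\epsilon\otimes\id_C)(\id_C\otimes\eta)$ is certainly an automorphism of~$C$ in $\Ho(\Cat_\K)$, but in a general symmetric monoidal category it can differ from~$\id_C$ by a unit (think of graded vector spaces with Koszul signs). This is harmless for your purposes---any automorphism of~$C$ is a Morita equivalence, so the image of the composite still additively generates $\proj C$, and your argument that this image lies in summands of powers of~$h_c$ then finishes the job. Alternatively, just invoke the standard fact that for an invertible object one can adjust $\eta$ so that the triangle identity does hold.
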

\begin{proof}
If $P\in \proj \K \simeq \K^\natural_\oplus$ is an additive generator then $\Mod \End_\K(P)\simeq \Mod \K$ by Corollary~\ref{cor:sat_Morita_eq}, and it follows immediately from the characterization recalled above that Azumaya $\K$-algebras are $\otimes$-invertible objects in $\Ho(\Cat_\K)$. 

Now consider two $\K$-categories $A$ and $B$ such that $A\otimes B\simeq \K$ in $\Ho(\Cat_\K)$. 
We may assume, without loss of generality, that $A$ and $B$ are saturated.
By Proposition \ref{prop:Mor_homs} this isomorphism can be realized by a Morita equivalence $F\colon A\otimes B \to \K^\natural_\oplus$. 
In particular there exist finitely many $(x_i,y_i) \in \obj A\times \obj B$ such that $\bigoplus_iF(x_i,y_i)$ is an additive generator for $\smash{\K_\oplus^\natural \simeq \proj \K}$. By setting $x:=\bigoplus_i x_i$ and $y:=\bigoplus_i y_i$, the object $P:=F(x,y)$ is then also an additive generator of $\smash{\K^\natural_\oplus}$, because it displays $\bigoplus_iF(x_i,y_i)$ as a retract; indeed we have $F(x,y)=\bigoplus_{i,j}F(x_i,y_j)$.
It follows that the $\K$-algebra $\End_\K(P)$ is Morita equivalent to~$\K$ and since $F$ is fully-faithful we obtain an isomorphism $A(x,x)\otimes B(y,y)\stackrel{\sim}{\to} \End_\K(P)$. Therefore $A(x,x)$ is an Azumaya $\K$-algebra. 
Let us now show that $A(x,x)$ and $A$ are isomorphic in $\Ho(\Cat_\K)$, \ie\ that they are Morita equivalent. Consider the composite functor
\begin{equation*}
\xymatrix{
A(x,x)  \otimes B \ar[r]^-{I\otimes B} & A\otimes B\ar[r]_-\sim^-{F} & \K_\oplus^\natural
}
\end{equation*}
where $I\colon A(x,x) \to A$ denotes the inclusion.
By construction, the composite is fully faithful and its image contains the additive generator $P$ of~$\smash{\K_\oplus^\natural}$. As a consequence, it is a Morita equivalence. By the 2-out-of-3 property, one concludes that $I\otimes B$ is also a Morita equivalence. Since $-\otimes B$ is an endo-equivalence of $\Ho(\Cat_\K)$, the inclusion $I\colon A(x,x)\to A$ must also be a Morita equivalence, thus proving that $A(x,x)$ and $A$ are isomorphic in $\Ho(\Cat_\K)$. 
This concludes the proof.
\end{proof}

\begin{cor}
\label{cor:identification}
We obtain  a natural isomorphism $\mathrm{Pic}(\Ho(\Cat_\K)^\otimes) \simeq \Br(\K)$. 
\qed
\end{cor}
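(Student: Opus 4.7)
The plan is to exhibit an explicit group isomorphism
\[
\Phi\colon \Br(\K)\xrightarrow{\sim} \mathrm{Pic}(\Ho(\Cat_\K)^\otimes)
\]
defined on representatives by sending an Azumaya $\K$-algebra $A$ to its isomorphism class in $\Ho(\Cat_\K)$, regarded as a one-object $\K$-category. Since Proposition~\ref{prop:Azumaya} has already done the conceptual work of identifying the invertible objects, what remains is essentially bookkeeping about equivalence classes and monoidal structures.

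First I would verify that $\Phi$ is a well-defined injection on classes. By Proposition~\ref{prop:Azumaya}, every Azumaya $\K$-algebra is $\otimes$-invertible in $\Ho(\Cat_\K)$, so the image lands in $\mathrm{Pic}(\Ho(\Cat_\K)^\otimes)$. Two Azumaya $\K$-algebras $A$ and $A'$ represent the same Brauer class precisely when they are Morita equivalent in the classical sense, and by the final clause of Theorem~\ref{thm:main1} this happens exactly when they are isomorphic in $\Ho(\Cat_\K)$. This gives well-definedness and injectivity in one stroke. For surjectivity, the other direction of Proposition~\ref{prop:Azumaya} asserts that every $\otimes$-invertible object of $\Ho(\Cat_\K)$ is isomorphic in $\Ho(\Cat_\K)$ to some Azumaya $\K$-algebra, so every Picard class lies in the image of $\Phi$.

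It remains to check that $\Phi$ is multiplicative. The symmetric monoidal structure on $\Cat_\K$ recalled in \S\ref{sec:preliminaries} restricts to the usual $\K$-algebra tensor product on the full subcategory $\Alg_\K$ of one-object $\K$-categories: the tensor of two such categories is again a one-object category whose endomorphism algebra is the tensor of the two endomorphism algebras. Derivation is not needed here, since every object is cofibrant (Remark~\ref{rem:Ho_tensor}). The tensor unit in both groups is represented by $\K$ itself, so $\Phi$ respects both the binary operation and the neutral element, and is therefore a group isomorphism. The only genuine input is Proposition~\ref{prop:Azumaya}, which is already in hand; no real obstacle remains.
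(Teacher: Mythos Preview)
Your proposal is correct and is precisely the argument implicit in the paper's \qed: the corollary is recorded as an immediate consequence of Proposition~\ref{prop:Azumaya} together with the description of $\Br(\K)$ given in the introduction (Morita equivalence classes of Azumaya algebras under tensor product), and you have simply spelled out the bijection and its multiplicativity.
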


\begin{remark}
As hinted in the introduction, this description of the Brauer group is well-known. For instance, the category $\Ho(\Cat_\K)$ appears briefly under the name $Cm_k$ in \cite{toen:azumaya}*{p.\,597}, described as in Proposition~\ref{prop:Mor_homs} (but without the Morita model structure and over a base field~$k$).
\end{remark}

\subsection*{Base-change}
Let $\K \to \mathbb{L}$ be a homomorphism of commutative rings. By base-change one obtains a $\otimes$-functor
\begin{eqnarray}\label{eq:base-change}
-\otimes_\K \mathbb{L}: \Cat_\K \too \Cat_{\mathbb{L}} && A \mapsto A\otimes_\K \mathbb{L}\,.
\end{eqnarray}

\begin{lemma}\label{lem:Mor}
The functor \eqref{eq:base-change} preserves Morita equivalences.
\end{lemma}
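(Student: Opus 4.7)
The plan is to characterize Morita equivalences via Proposition~\ref{prop:satu_equiv} and verify the two defining conditions separately for $F\otimes_\K\mathbb{L}$, starting from their validity for $F$. Recall that a $\K$-functor $F\colon A\to B$ is a Morita equivalence if and only if it is fully faithful and the image of $\iota_B\circ F$ additively generates~$B_\oplus^\natural$.

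Full faithfulness of $F\otimes_\K\mathbb{L}\colon A\otimes_\K\mathbb{L}\to B\otimes_\K\mathbb{L}$ is immediate: the structure map at a pair of objects $(x,y)$ is $F\otimes_\K\mathbb{L}\colon A(x,y)\otimes_\K\mathbb{L}\to B(Fx,Fy)\otimes_\K\mathbb{L}$, which is an isomorphism of $\mathbb{L}$-modules because $F\colon A(x,y)\to B(Fx,Fy)$ is an isomorphism of $\K$-modules.

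For the additive generation condition, I would exploit that the object sets of $A\otimes_\K\mathbb{L}$ and $B\otimes_\K\mathbb{L}$ are canonically those of $A$ and $B$. Given $y\in\obj(B)$, the hypothesis on $F$ provides objects $x_1,\ldots,x_n\in\obj(A)$ and morphisms $\sigma\colon y\to \bigoplus_iFx_i$, $\pi\colon \bigoplus_iFx_i\to y$ in $B_\oplus^\natural$ with $\pi\sigma=1_y$. I would then transport this retract datum along the evident canonical $\mathbb{L}$-linear functor $\Phi\colon B_\oplus^\natural\otimes_\K\mathbb{L}\to (B\otimes_\K\mathbb{L})_\oplus^\natural$, which acts as the identity on objects of $B$ and base-changes matrix entries entrywise. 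A direct computation shows that $\Phi$ is fully faithful and compatible with the Yoneda-style embeddings $\iota$, so that $\Phi(\pi)$ and $\Phi(\sigma)$ exhibit $y$ as a retract of $\bigoplus_i(F\otimes_\K\mathbb{L})(x_i)$ inside $(B\otimes_\K\mathbb{L})_\oplus^\natural$, as required.

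The main bookkeeping obstacle is precisely this comparison step: $(-)_\oplus^\natural$ does not in general commute with base change, since idempotents in $B(x,x)\otimes_\K\mathbb{L}$ need not lift to $B(x,x)$, so $\Phi$ is not an equivalence; fortunately only this one direction is required. As a more conceptual alternative, one can instead verify that $-\otimes_\K\mathbb{L}$ is a left Quillen functor for the Morita model structures: it sends the generating (trivial) cofibrations $\mathrm F_\K(I_\can)$ and $\mathrm F_\K(J_\can)$ to their $\mathbb{L}$-analogues, since $\mathrm F_\K(C)\otimes_\K\mathbb{L}=\mathrm F_\mathbb{L}(C)$ for every small category~$C$, and it maps the localizing set $\{R_0,R_1,S_2\}\subset\Cat_\K$ to its exact counterpart in~$\Cat_\mathbb{L}$; since every object of $\cM_\Mor$ is cofibrant, left Quillen functors preserve all weak equivalences, which yields the lemma.
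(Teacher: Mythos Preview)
Your primary argument is essentially the paper's own: both invoke Proposition~\ref{prop:satu_equiv}, check full faithfulness of $F\otimes_\K\mathbb{L}$ directly, and deduce additive generation by pushing retract data through the fully faithful comparison $B_\oplus^\natural\otimes_\K\mathbb{L}\hookrightarrow (B\otimes_\K\mathbb{L})_\oplus^\natural$; the paper merely packages this as a chain of inclusions $(B\otimes_\K\mathbb{L})_\oplus = B_\oplus\otimes_\K\mathbb{L}\subseteq(\Image F)_\oplus^\natural\otimes_\K\mathbb{L}\subseteq(\Image(F\otimes_\K\mathbb{L}))_\oplus^\natural$ rather than arguing object by object, and makes the same remark that $(-)^\natural$ does not commute with base change.

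Your alternative via left Quillen functors is a genuinely different route the paper does not take. It trades the elementary bookkeeping for model-categorical machinery: one checks that $-\otimes_\K\mathbb{L}$ is left adjoint to restriction, sends $\mathrm F_\K(I_\can)$, $\mathrm F_\K(J_\can)$ to their $\mathbb{L}$-counterparts, and sends the localizing set $\{R_0,R_1,S_2\}$ to the corresponding $\mathbb{L}$-linear maps (hence to Morita equivalences in $\Cat_\mathbb{L}$), so it remains left Quillen after Bousfield localization; Ken Brown's lemma plus global cofibrancy then gives preservation of all weak equivalences. This approach is cleaner conceptually and immediately upgrades the conclusion to a Quillen adjunction, at the cost of invoking the general theory of Bousfield localizations of Quillen pairs. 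The paper's hands-on argument, by contrast, is self-contained and makes the geometric content (transport of retract data) visible.
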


\begin{proof}
Note first that, up to equivalence, the functor $-\otimes_\K \mathbb{L}$ commutes with the additive hull $(-)_\oplus$ but not with the idempotent completion $(-)^\natural$, as in general we only get a fully faithful inclusion $A^\natural \otimes_\K\mathbb{L} \subseteq (A \otimes_\K\mathbb{L})^\natural$. In order to prove this lemma we make use of the characterization of Proposition~\ref{prop:satu_equiv}.
Let $F\colon A\to B$ be a Morita equivalence in $\Cat_\K$.
Thus $F$ is fully faithful, and the full image of $F$ additively generates $B^\natural_\oplus$. 
It follows that $F\otimes_\K\mathbb{L}$ is fully faithful, and that we have a fully faithful embedding
\[
(B\otimes_\K\mathbb{L})_\oplus 
= B_\oplus \otimes_\K\mathbb{L}
\subseteq (\Image(F)_\oplus^\natural )\otimes_\K\mathbb{L} 
\subseteq  (\Image(F\otimes_\K\mathbb{L} ))_\oplus^\natural \,,
\]
showing that the full image of $F\otimes_\K\mathbb{L}$ additively generates $(B\otimes_\K\mathbb{L})_\oplus^\natural$. This allows us to conclude that the functor $F\otimes_\K\mathbb{L}\colon A\otimes_\K\mathbb{L} \to B\otimes_\K\mathbb{L}$ is a Morita equivalence.
\end{proof}

\begin{cor}\label{cor:base-change}
The base-change functor \eqref{eq:base-change} induces a well-defined $\otimes$-functor 
\[-\otimes_\K \mathbb{L}\colon \Ho(\Cat_\K)^\otimes \to \Ho(\Cat_{\mathbb{L}})^\otimes\]
 such that $\mathrm{Pic}(-\otimes_\K \mathbb{L})\simeq r_{\mathbb L/\K}$ under the identification of Corollary~\ref{cor:identification}.
 \qed
\end{cor}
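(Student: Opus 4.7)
The plan is to assemble Corollary~\ref{cor:base-change} from three routine ingredients: monoidality of $-\otimes_\K\mathbb{L}$ at the model-category level, descent to the Morita homotopy category via Lemma~\ref{lem:Mor}, and identification with $r_{\mathbb{L}/\K}$ through the explicit description of $\otimes$-invertible objects from Proposition~\ref{prop:Azumaya}.

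First I would verify that $-\otimes_\K\mathbb{L}\colon\Cat_\K\to\Cat_\mathbb{L}$ is a symmetric monoidal functor: for $\K$-categories $A,B$ there is a canonical isomorphism $(A\otimes_\K B)\otimes_\K\mathbb{L}\simeq(A\otimes_\K\mathbb{L})\otimes_\mathbb{L}(B\otimes_\K\mathbb{L})$ coming directly from $(M\otimes_\K N)\otimes_\K\mathbb{L}\simeq(M\otimes_\K\mathbb{L})\otimes_\mathbb{L}(N\otimes_\K\mathbb{L})$ on Hom modules, together with $\K\otimes_\K\mathbb{L}=\mathbb{L}$. Combined with Lemma~\ref{lem:Mor}, which asserts that this functor preserves Morita equivalences, the universal property of the Bousfield localization $\cM_\Mor$ yields an induced symmetric monoidal functor $-\otimes_\K\mathbb{L}\colon\Ho(\Cat_\K)\to\Ho(\Cat_\mathbb{L})$.

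Any symmetric monoidal functor takes $\otimes$-invertible objects to $\otimes$-invertible objects (since the image of an inverse remains an inverse of the image), so the induced functor restricts to a $\otimes$-functor $\Ho(\Cat_\K)^\otimes\to\Ho(\Cat_\mathbb{L})^\otimes$, and taking Picard groups produces a well-defined homomorphism $\mathrm{Pic}(-\otimes_\K\mathbb{L})$.

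It remains to match this homomorphism with the classical restriction $r_{\mathbb{L}/\K}$ under Corollary~\ref{cor:identification}. By Proposition~\ref{prop:Azumaya}, every invertible class in $\Ho(\Cat_\K)^\otimes$ is represented by an Azumaya $\K$-algebra $A$; using the defining characterization $A\otimes_\K B\simeq\End_\K(P)$ (recalled just before Proposition~\ref{prop:Azumaya}) and tensoring with $\mathbb{L}$, one sees that $A\otimes_\K\mathbb{L}$ is Azumaya over $\mathbb{L}$, and the identification of Corollary~\ref{cor:identification} sends its class in $\Ho(\Cat_\mathbb{L})^\otimes$ to $[A\otimes_\K\mathbb{L}]\in\Br(\mathbb{L})$, which is precisely $r_{\mathbb{L}/\K}[A]$ by definition. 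Thus the diagram commutes on classes, proving $\mathrm{Pic}(-\otimes_\K\mathbb{L})\simeq r_{\mathbb{L}/\K}$. There is no real obstacle here: all substantive work has been offloaded to Lemma~\ref{lem:Mor} (well-definedness on $\Ho$) and to Proposition~\ref{prop:Azumaya}/Corollary~\ref{cor:identification} (identification of $\mathrm{Pic}$ with $\Br$), and what remains is bookkeeping to see that these identifications are natural in the commutative ring.
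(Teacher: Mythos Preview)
Your proposal is correct and matches the paper's intended argument: the corollary carries a bare \qed\ in the paper, since it follows immediately from the fact that \eqref{eq:base-change} is declared a $\otimes$-functor together with Lemma~\ref{lem:Mor}, Proposition~\ref{prop:Azumaya}, and the definition of $r_{\mathbb L/\K}$ via extension of scalars. You have simply spelled out the bookkeeping that the paper leaves implicit.
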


Thus $\Pic$ recovers the covariant functoriality of the Brauer group. 

\section{Corestriction}\label{sec:corestriction}
In this section we build a corestriction $\otimes$-functor in the setting of (small) $\K$-categories. Then we verify its compatibility with the $\otimes$-categorification of \S\ref{sec:tensor-categorification}; see Proposition~\ref{prop:descent}. We assume that $\K$ is a field and that $\mathbb{L}$ is a finite Galois extension of $\K$ with Galois group $G:=\mathrm{Gal}(\mathbb{L}/\K)$ and degree $n:= [\mathbb{L}:\K]$. 
We start by recalling some definitions and results from Galois descent theory; consult \cite{draxl}*{\S6} or \cite{knus-ojanguren}*{II.\S5} for further details.

\begin{defi}\label{def:skew1}
An \emph{$\mathbb L/\K$-Galois module} is an $\mathbb{L}$-vector space $W$ endowed with a left $G$-action which is skew-linear, in the sense that $\sigma(x) \sigma(w)=\sigma(xw)$ for every $x\in \mathbb{L}, w \in W$ and $\sigma \in G$. 
Denote by $\GalMod \mathbb L/\K$ the category of $\mathbb L/\K$-Galois modules and $G$-equivariant $\K$-linear maps.
\end{defi}

\begin{prop}[{\cite{draxl}*{I.\S6 Theorem~1}}]
\label{prop:Speiser1}
Under the above hypotheses and notations, we have a natural $G$-equivariant $\mathbb{L}$-linear isomorphism
\begin{eqnarray}\label{eq:can}
\mathbb{L} \otimes_\K W^G \stackrel{\sim}{\too} W && \ell \otimes w \mapsto \ell w
\end{eqnarray}
for every $W\in \GalMod \mathbb L/\K$. Moreover, these isomorphisms form the counit of the following equivalence of categories:
\begin{equation*}
\xymatrix{
\GalMod \mathbb L/\K \ar@/^2ex/[d]^{(-)^G}_{\simeq\;} \\ 
\Mod \K \ar@/^2ex/[u]^{\mathbb{L} \otimes_\K-} \,.
}
\end{equation*}
\end{prop}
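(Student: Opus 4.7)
The first task is routine structural data: the map $\Psi_W \colon \mathbb{L} \otimes_\K W^G \to W$, $\ell \otimes w \mapsto \ell w$, is well-defined by $\K$-bilinearity, $\mathbb{L}$-linear by associativity of the action, natural in $W$, and $G$-equivariant for the action $\sigma(\ell \otimes w) = \sigma(\ell) \otimes w$ because $\sigma(\Psi_W(\ell \otimes w)) = \sigma(\ell w) = \sigma(\ell)\sigma(w) = \sigma(\ell) w$, using skew-linearity and $w \in W^G$. Thus the proof reduces to two claims: (a) each $\Psi_W$ is bijective, and (b) the unit $\eta_V \colon V \to (\mathbb{L} \otimes_\K V)^G$, $v \mapsto 1 \otimes v$, is bijective for every $V \in \Mod \K$. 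Together with the triangle identities, which are immediate on elementary tensors, these give the adjoint equivalence of the statement.

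My preferred approach for (a) is via Dedekind's lemma on the $\mathbb{L}$-linear independence of the characters $\sigma \in G$, equivalently, the fundamental Galois isomorphism
\[
\varphi \colon \mathbb{L} \otimes_\K \mathbb{L} \stackrel{\sim}{\too} \prod_{\sigma \in G} \mathbb{L}, \qquad a \otimes b \mapsto \bigl(a\sigma(b)\bigr)_{\sigma \in G}.
\]
Since $\mathbb{L}$ is faithfully flat over $\K$, it suffices to check bijectivity of $\id_\mathbb{L} \otimes_\K \Psi_W$. Applying $\varphi$ to the outer $\mathbb{L} \otimes_\K \mathbb{L}$ tensor factors, one identifies $\mathbb{L} \otimes_\K W$ with $\prod_{\sigma \in G} W$, under which the diagonal $G$-action $\sigma(\ell \otimes w) = \sigma(\ell)\otimes \sigma(w)$ on the source becomes the permutation action on indices; hence the $G$-invariants form the diagonal copy of $W$, and under the same identification $\id_\mathbb{L} \otimes_\K \Psi_W$ is visibly the identity on this diagonal. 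As an alternative, the normal basis theorem yields $\theta \in \mathbb{L}$ such that $\{\sigma(\theta)\}_{\sigma \in G}$ is a $\K$-basis of $\mathbb{L}$; its dual basis $\{\theta_\sigma^*\}$ then furnishes an explicit $\K$-linear inverse of $\Psi_W$ by a direct formula averaging $w$ across $G$ and extracting coordinates.

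For (b), pick a $\K$-basis $\{v_j\}$ of $V$ to write $\mathbb{L} \otimes_\K V = \bigoplus_j \mathbb{L}\cdot(1\otimes v_j)$; a sum $\sum_j \ell_j \otimes v_j$ is $G$-invariant iff each $\ell_j \in \mathbb{L}^G = \K$, in which case it already lies in $\K \otimes_\K V = V$, proving $\eta_V$ surjective; injectivity is immediate from flatness. The main obstacle is the bookkeeping in the descent step (a): the space $\mathbb{L} \otimes_\K \mathbb{L} \otimes_\K W^G$ carries two distinct $\mathbb{L}$-module structures and two distinct $G$-actions, and one must carefully track which of them is being transported through $\varphi$ and which is being identified with the semilinear $G$-action on $W$. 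Once this is set straight, the verification of the counit isomorphism, the unit isomorphism, and the triangle identities is entirely formal.
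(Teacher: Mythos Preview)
The paper does not prove this proposition; it is simply quoted from Draxl (I.\S6, Theorem~1), so there is no argument in the paper to compare against.

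On its own merits: your treatment of the unit isomorphism (b) is correct, and your \emph{alternative} route to (a) via the normal basis theorem---equivalently, via the invertibility of the matrix $(\sigma(e_j))_{\sigma,j}$ furnished by Dedekind's independence of characters---is the standard argument and works. For any $\K$-basis $(e_j)$ of $\mathbb L$ one uses the inverse of that matrix together with the $G$-averages $\sum_\sigma \sigma(e_j w)\in W^G$ to write down an explicit two-sided inverse of $\Psi_W$.

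Your primary faithfully-flat argument, however, is not actually carried out. After base change you have a map
\[
\id_{\mathbb L}\otimes_\K\Psi_W \colon \mathbb L\otimes_\K\mathbb L\otimes_\K W^G \longrightarrow \mathbb L\otimes_\K W,
\]
and you then compute $(\mathbb L\otimes_\K W)^G$ for the diagonal action. But the domain of $\id_{\mathbb L}\otimes_\K\Psi_W$ is $\mathbb L\otimes_\K(\mathbb L\otimes_\K W^G)$, not $\mathbb L\otimes_\K(\mathbb L\otimes_\K W)^G$, so the sentence ``$\id_{\mathbb L}\otimes_\K\Psi_W$ is visibly the identity on this diagonal'' does not typecheck as written. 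Making the descent version precise requires exactly the bookkeeping you flag in your final paragraph, and once you unwind those identifications you are performing the same Dedekind computation as in your alternative. The descent packaging therefore buys nothing here; the explicit normal-basis argument is both shorter and complete.
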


\begin{defi}\label{defi:skewaction1}
If $V$ is an $\mathbb L$-vector space, denote by ${}^\sigma V$ the $\mathbb{L}$-vector space that coincides with $V$ as a group and whose $\mathbb{L}$-action is given by $x \cdot v := \sigma^{-1}(x) v$ for $x \in \mathbb{L}$ and $v \in V$. 
Then $\smash{\bigotimes_{\sigma \in G} {}^\sigma V }$, where the tensor product is taken over $\mathbb{L}$, can be endowed with the skew-linear $G$-action $\tau(\otimes_{\sigma \in G} v_\sigma):= \otimes_{\sigma \in G} v_{\tau^{-1} \sigma}$, for $\tau\in G$. 
By taking $G$-invariants, we obtain in this way a well-defined functor
\begin{eqnarray}\label{eq:co-restriction}
\Cor_{\mathbb L/\K}\colon \Mod \mathbb{L} \too \Mod \K && V \mapsto \left(\textstyle\bigotimes_{\sigma \in G} {}^\sigma V \right)^G\,.
\end{eqnarray}
\end{defi}
Let us now describe some properties of this functor.

\begin{prop}\label{prop:monoidal1}
The functor \eqref{eq:co-restriction} is symmetric monoidal.
\end{prop}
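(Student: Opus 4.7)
The plan is to factor the corestriction functor as the composite of two symmetric monoidal functors,
\[
\Mod \mathbb{L} \stackrel{T}{\too} \GalMod \mathbb{L}/\K \stackrel{(-)^G}{\too} \Mod \K ,
\]
where $T(V) := \bigotimes_{\sigma \in G} {}^\sigma V$ carries the skew-linear $G$-action of Definition~\ref{defi:skewaction1}. The composite is $\Cor_{\mathbb L/\K}$ by construction, so it suffices to prove that each factor is symmetric monoidal.

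First, I would check that $(-)^G$ is symmetric monoidal. By Proposition~\ref{prop:Speiser1} it has a quasi-inverse $\mathbb L \otimes_\K -$, and this quasi-inverse is visibly strong monoidal: for $M,N\in \Mod \K$ the natural $\mathbb L$-linear isomorphism $\mathbb L \otimes_\K (M\otimes_\K N) \stackrel{\sim}{\to} (\mathbb L \otimes_\K M)\otimes_\mathbb L (\mathbb L \otimes_\K N)$ is $G$-equivariant (the $G$-action sits entirely on the single $\mathbb L$-factor on the left, and on the diagonal on the right; Speiser's isomorphism \eqref{eq:can} identifies the two), and $\mathbb L \otimes_\K \K = \mathbb L$ is the unit of $\GalMod\mathbb L/\K$. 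Thus its quasi-inverse $(-)^G$ inherits a canonical symmetric monoidal structure sending $\otimes_\mathbb L$ on Galois modules to $\otimes_\K$ on $\K$-modules.

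Next, I would show that $T$ is symmetric monoidal. For the unit, $\bigotimes_{\sigma} {}^\sigma \mathbb L$ is one-dimensional over $\mathbb L$ with generator $\bigotimes_\sigma 1$, which is fixed by the permutation action; this yields an isomorphism $\mathbb L\stackrel{\sim}{\to} T(\mathbb L)$ of Galois modules. For the product, a direct inspection of the balancing relations shows that the identity of the underlying abelian group induces a natural $\mathbb L$-linear isomorphism ${}^\sigma V\otimes_\mathbb L {}^\sigma W \cong {}^\sigma(V\otimes_\mathbb L W)$ for each $\sigma \in G$. Tensoring these isomorphisms over all $\sigma\in G$ and using the rearrangement isomorphism
\[
\bigotimes_{\sigma\in G}(A_\sigma \otimes_\mathbb L B_\sigma) \;\cong\; \Bigl(\bigotimes_{\sigma\in G}A_\sigma\Bigr) \otimes_\mathbb L \Bigl(\bigotimes_{\sigma\in G} B_\sigma\Bigr)
\]
provided by the symmetric monoidal structure of $(\Mod \mathbb L, \otimes_\mathbb L)$, one obtains a natural $\mathbb L$-linear isomorphism $T(V)\otimes_\mathbb L T(W) \stackrel{\sim}{\to} T(V\otimes_\mathbb L W)$.

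The only real obstacle is then the bookkeeping verification that this structure isomorphism is $G$-equivariant and satisfies the coherence pentagon, triangle and hexagon. This reduces to observing that both the diagonal $G$-action on $T(V)\otimes_\mathbb L T(W)$ and the $G$-action on $T(V\otimes_\mathbb L W)$ arise from the same left-translation action of $G$ on the indexing set $G$, so that the rearrangement isomorphism automatically intertwines them; the coherence axioms then descend from the coherence of $\otimes_\mathbb L$. Once both $T$ and $(-)^G$ are monoidal, the composite $\Cor_{\mathbb L/\K}$ is symmetric monoidal, completing the proof.
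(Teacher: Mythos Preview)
Your proof is correct and follows essentially the same idea as the paper: both rely on Speiser's theorem (Proposition~\ref{prop:Speiser1}) to reduce the question to the level of $\mathbb L$-Galois modules, where the monoidal structure maps are the evident rearrangement isomorphisms. The paper organizes this by writing down the lax monoidal structure on $\Cor_{\mathbb L/\K}$ directly and then checking invertibility after applying $\mathbb L\otimes_\K-$, whereas you make the factorization $\Cor_{\mathbb L/\K}=(-)^G\circ T$ explicit and verify that each factor is strong monoidal; these are two packagings of the same computation.
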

\begin{proof}
The canonical $\K$-linear embedding
\begin{eqnarray*}\label{eq:can1}
\K \to \left(\textstyle\bigotimes_{\sigma \in G}{}^\sigma \mathbb{L} \right)^G  & 1 \mapsto 1 \otimes \cdots \otimes 1
\end{eqnarray*}
and the natural $\K$-linear homomorphisms ($V,W\in \Mod \mathbb L$)
\begin{equation*}\label{eq:can2}
\left(\textstyle\bigotimes_{\sigma \in G}{}^\sigma V \right)^G \otimes_\K \left(\textstyle\bigotimes_{\sigma \in G}{}^\sigma W \right)^G \to \left(\textstyle\bigotimes_{\sigma \in G}{}^\sigma (V\otimes_{\mathbb{L}}W) \right)^G\,,
\end{equation*}
sending $(\otimes_{\sigma \in G} v_\sigma) \otimes(\otimes_{\sigma \in G} w_\sigma)$ to $\otimes_{\sigma \in G} (v_\sigma \otimes w_\sigma)$, are easily seen to equip \eqref{eq:co-restriction} with a lax symmetric monoidal structure. 
 To see that they are invertible, by Proposition~\ref{prop:Speiser1} it suffices to show that their images under $\mathbb{L} \otimes_\K - \colon \Mod \mathbb K \to \GalMod \mathbb L/\K$ are invertible. Under the identification \eqref{eq:can} these images correspond to the evident isomorphisms $\mathbb{L} \simeq \mathbb{L} \otimes_\K \K \stackrel{\sim}{\rightarrow} \textstyle\bigotimes_{\sigma \in G}{}^\sigma \mathbb{L} \simeq \mathbb{L}$ and 
 $$(\textstyle\bigotimes_{\sigma \in G}{}^\sigma V )\otimes_{\mathbb{L}} (\textstyle\bigotimes_{\sigma \in G}{}^\sigma W ) \stackrel{\sim}{\to} \textstyle\bigotimes_{\sigma \in G}{}^\sigma (V\otimes W)\,.$$ 
This achieves the proof.
\end{proof}
\begin{lemma}\label{lem:dimension}
There is a natural isomorphism of $\K$-vector spaces
\[
 \Cor_{\mathbb L/\K}(\textstyle\bigoplus^m_{i=1} V) \simeq \textstyle\bigoplus_{i=1}^{m^{|G|}} \Cor_{\mathbb L/\K}(V)
\]
for every $V \in \Mod \mathbb L$ and integer $m \geq 1$.
\end{lemma}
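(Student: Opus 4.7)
The plan is to reduce the claim to a $\K$-dimension count via the Galois descent equivalence of Proposition~\ref{prop:Speiser1}, and then promote it to a natural construction through an explicit orbit decomposition of the relevant tensor product.

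First, Proposition~\ref{prop:Speiser1} yields $\dim_\K W^G = \dim_\mathbb{L} W$ for every Galois module $W \in \GalMod \mathbb{L}/\K$. Applied to $W := \bigotimes_{\sigma\in G}{}^\sigma U$ for any $U \in \Mod\mathbb{L}$, this gives $\dim_\K\Cor_{\mathbb L/\K}(U) = (\dim_\mathbb{L} U)^{|G|}$. Setting $U = V^{\oplus m}$ (and using cardinal arithmetic in the infinite-dimensional case),
\[\dim_\K\Cor_{\mathbb L/\K}(V^{\oplus m}) \;=\; (m\dim_\mathbb{L} V)^{|G|} \;=\; m^{|G|}\dim_\K\Cor_{\mathbb L/\K}(V).\]
Since $\K$-vector spaces are classified up to isomorphism by their cardinal dimension, an abstract $\K$-linear isomorphism follows immediately.

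Second, to obtain an isomorphism natural in $V$, I would use the distributivity of $\otimes_\mathbb{L}$ over $\oplus$ to write
\[\bigotimes_{\sigma\in G}{}^\sigma(V^{\oplus m}) \;\cong\; \bigoplus_{f\in [m]^G}\bigotimes_{\sigma\in G}{}^\sigma V,\]
and check that the skew-linear $G$-action permutes the $f$-summands via $\tau\cdot f = f\circ\tau^{-1}$, combined with the usual skew-linear action inside each summand. Partitioning $[m]^G$ into $G$-orbits, with chosen representatives $f_0(\mathcal{O})$ and stabilizers $H_\mathcal{O} := G_{f_0(\mathcal{O})}$, taking $G$-invariants then yields the natural decomposition
\[\Cor_{\mathbb L/\K}(V^{\oplus m}) \;\cong\; \bigoplus_{\mathcal{O}\in [m]^G/G} X^{H_\mathcal{O}}, \qquad \text{where } X := \bigotimes_{\sigma\in G}{}^\sigma V.\]
Applying Proposition~\ref{prop:Speiser1} to each intermediate extension $\mathbb{L}/\mathbb{L}^{H_\mathcal{O}}$, and fixing once and for all a $\K$-basis of each $\mathbb{L}^{H_\mathcal{O}}$, identifies the $\mathcal{O}$-summand with $|\mathcal{O}|$ copies of $\Cor_{\mathbb L/\K}(V)$, for a grand total of $\sum_\mathcal{O}|\mathcal{O}| = m^{|G|}$ copies as asserted.

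The main technical step is verifying that the skew-linear $G$-action, restricted to the stabilizer $H_\mathcal{O}\leq G$, makes $X$ into an $\mathbb{L}/\mathbb{L}^{H_\mathcal{O}}$-Galois module in the sense of Definition~\ref{def:skew1}, so that Proposition~\ref{prop:Speiser1} applies to the subextension; this is a routine but somewhat delicate bookkeeping exercise about how the original $\mathbb{L}$-action restricts along $\mathbb{L}^{H_\mathcal{O}}\hookrightarrow\mathbb{L}$. Once this is verified and the auxiliary data (orbit representatives and bases of intermediate fixed fields) is fixed independently of $V$, naturality in $V$ is automatic from the naturality of each ingredient.
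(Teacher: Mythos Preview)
Your proposal is essentially correct but takes a genuinely different route from the paper. Both arguments begin with the distributivity isomorphism $\bigotimes_\sigma {}^\sigma(V^{\oplus m}) \cong \bigoplus_{f\in [m]^G} X$, where $X=\bigotimes_\sigma {}^\sigma V$, under which the skew $G$-action becomes the skew action on each summand combined with the permutation $\tau\cdot f = f\circ\tau^{-1}$ of the index set $[m]^G$. From here the paper writes down a second explicit $G$-equivariant isomorphism (built from the \emph{right} $G$-action $(\sigma\star f)(t)=f(t\sigma)$ on $[m]^G$) that converts this directly into the purely diagonal action; then $(-)^G$ commutes with the direct sum and yields all $m^{|G|}$ copies of $\Cor(V)$ in one stroke, with no orbit analysis and no intermediate fields. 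Your approach instead decomposes $[m]^G$ into $G$-orbits and computes the invariants orbit-by-orbit as $X^{H_\mathcal{O}}$; this is conceptually transparent from the group-action side, but it forces the extra step of relating each $X^{H_\mathcal{O}}$ back to copies of $X^G$.

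That extra step is where your write-up is slightly loose. Applying Proposition~\ref{prop:Speiser1} to the subextension $\mathbb{L}/\mathbb{L}^{H_\mathcal{O}}$ only gives $\dim_{\mathbb{L}^{H_\mathcal{O}}}X^{H_\mathcal{O}}=\dim_{\mathbb L}X$; by itself this does not produce a \emph{natural} identification of $X^{H_\mathcal{O}}$ with copies of $\Cor(V)=X^G$. What you actually need is Speiser for $\mathbb{L}/\K$ itself: the natural isomorphism $X\cong \mathbb{L}\otimes_\K X^G$ yields, upon taking $H_\mathcal{O}$-invariants, a natural isomorphism $X^{H_\mathcal{O}}\cong \mathbb{L}^{H_\mathcal{O}}\otimes_\K X^G$, and \emph{then} your fixed $\K$-basis of $\mathbb{L}^{H_\mathcal{O}}$ splits this into $|\mathcal{O}|$ copies of $\Cor(V)$. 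With this small adjustment your argument is complete.
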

\begin{proof}
One sees easily that there is a natural isomorphism of $\mathbb L$-vector spaces
\begin{eqnarray}\label{eq:iso1}
\bigotimes_{\sigma \in G} {}^\sigma(\bigoplus^m_{i=1} V) \stackrel{\sim}{\too} \bigoplus_{f:G \to \{1, \ldots, m\}} \bigotimes_{\sigma \in G} {}^\sigma V && \otimes_\sigma(v_{\sigma,i})_i \mapsto (\otimes_\sigma v_{\sigma, f(\sigma)})_f \,.
\end{eqnarray}
If one equips the left hand side with the skew-linear action of Definition~\ref{defi:skewaction1}, then \eqref{eq:iso1} becomes $G$-equivariant provided we equip the right hand side with the following skew-linear action: $\tau \cdot (\otimes_\sigma w_\sigma^f)_f := (\otimes_\sigma w_{\tau^{-1} \sigma}^{\tau^{-1}f})_f$, $\tau \in G$, where $\tau^{-1}f\colon G \to \{1,\ldots, m\}$ is the function $(\tau^{-1} f)(t):=f(\tau t)$, $t\in G$. There is also a natural $G$-equivariant isomorphism
\begin{eqnarray}\label{eq:iso2}
\bigoplus_{f:G \to \{1, \ldots, m\}} \bigotimes_{\sigma \in G} {}^\sigma V \stackrel{\sim}{\too} \bigoplus_{f:G \to \{1, \ldots, m\}} \bigotimes_{\sigma \in G} {}^\sigma V && (\otimes_\sigma w_\sigma^f)_f \mapsto (\otimes_\sigma w_\sigma^{\sigma \star f})_f\,,
\end{eqnarray}
where $\sigma \star f \colon G \to \{1, \ldots, m\}$ is the function $(\sigma\star f)(t):=f(t\sigma)$, $t\in G$, and the right hand side is equipped with the diagonal $G$-action. By applying $(-)^G$ to the composite \eqref{eq:iso2} $\circ$ \eqref{eq:iso1}, and after fixing an ordering for the elements of $G$, we obtain the claimed natural isomorphism.
\end{proof}

\begin{remark} \label{rem:algebras_to_algebras}
It follows from Proposition \ref{prop:monoidal1} that $\Cor_{\mathbb L/\K}$ maps $\mathbb L$-algebras $S$ to $\K$-algebras $\Cor_{\mathbb L/\K}(S)$ and left (resp.\ right) $S$-actions to left (resp.\ right) $\Cor_{\mathbb L/\K(S)}$-actions.
\end{remark}

\begin{prop}\label{prop:tensorproduct}
Let $R$, $S$ and $T$ be $\mathbb L$-algebras and $M$ an $R\text{-}S$-bimodule which is finitely generated and projective as a right $S$-module. Then, there is a canonical isomorphism of $\Cor_{\mathbb L/\K}(R)\text{-}\Cor_{\mathbb L/\K}(T)$-bimodules
$$ \Cor_{\mathbb L/\K}(M)\otimes_{\Cor_{\mathbb L/\K}(S)} \Cor_{\mathbb L/\K}(N) \simeq \Cor_{\mathbb L/\K}(M\otimes_S N)$$
for every $S\text{-}T$-bimodule $N$.
\end{prop}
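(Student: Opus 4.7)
The plan is to construct the canonical map via the strong symmetric monoidal structure of $\Cor_{\mathbb{L}/\K}$ established in Proposition~\ref{prop:monoidal1}, and then verify that it is an isomorphism by Galois descent along the equivalence of Proposition~\ref{prop:Speiser1}.

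First, the strong monoidal structure of $\Cor_{\mathbb L/\K}$ furnishes a natural isomorphism $\Cor_{\mathbb L/\K}(M) \otimes_\K \Cor_{\mathbb L/\K}(N) \xrightarrow{\sim} \Cor_{\mathbb L/\K}(M \otimes_{\mathbb L} N)$ of $\Cor_{\mathbb L/\K}(R)$--$\Cor_{\mathbb L/\K}(T)$-bimodules (see Remark~\ref{rem:algebras_to_algebras}). Post-composing with the image under $\Cor_{\mathbb L/\K}$ of the canonical projection $M \otimes_{\mathbb L} N \twoheadrightarrow M \otimes_S N$, one obtains a natural bimodule map that is readily seen to be $\Cor_{\mathbb L/\K}(S)$-balanced. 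By the universal property of $\otimes_{\Cor_{\mathbb L/\K}(S)}$, it factors through the desired canonical map
$$
\varphi \colon \Cor_{\mathbb L/\K}(M) \otimes_{\Cor_{\mathbb L/\K}(S)} \Cor_{\mathbb L/\K}(N) \longrightarrow \Cor_{\mathbb L/\K}(M \otimes_S N).
$$

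To prove $\varphi$ is invertible, I would apply the equivalence of categories $\mathbb L \otimes_\K (-) \colon \Mod \K \xrightarrow{\sim} \GalMod \mathbb L/\K$ from Proposition~\ref{prop:Speiser1}; by faithfulness it suffices to show that $\mathbb L \otimes_\K \varphi$ is a $G$-equivariant $\mathbb L$-linear isomorphism. Using the counit isomorphism $\mathbb L \otimes_\K \Cor_{\mathbb L/\K}(V) \simeq \bigotimes_{\sigma \in G} {}^\sigma V$ and the fact that $\mathbb{L}$ is $\K$-flat (so that $\mathbb L \otimes_\K (-)$ commutes with tensor products over $\Cor_{\mathbb L/\K}(S)$), the map $\mathbb L \otimes_\K \varphi$ identifies with a natural map
$$
\bigl(\textstyle\bigotimes_\sigma {}^\sigma M\bigr) \otimes_{\bigotimes_\sigma {}^\sigma S} \bigl(\textstyle\bigotimes_\sigma {}^\sigma N\bigr) \longrightarrow \textstyle\bigotimes_\sigma {}^\sigma(M \otimes_S N).
$$

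The second step is to recognize this as an instance of the iterated distributivity isomorphism
$$
\bigl(\textstyle\bigotimes_i M_i\bigr) \otimes_{\bigotimes_i S_i} \bigl(\textstyle\bigotimes_i N_i\bigr) \stackrel{\sim}{\longrightarrow} \textstyle\bigotimes_i (M_i \otimes_{S_i} N_i),
$$
valid for any finite family of $\mathbb L$-algebras $S_i$ with $S_i$-bimodules $M_i, N_i$. The two-factor version follows by applying the right-exact functor $- \otimes_{\mathbb L} -$ to the coequalizer presentations of the two bimodule tensor products and invoking their universal properties; the general case then follows by an easy induction on $|G|$. The main obstacle I expect is verifying that this identification is $G$-equivariant with respect to the skew-linear actions (which permute the tensor factors via $\tau \cdot (\otimes_\sigma v_\sigma) = \otimes_\sigma v_{\tau^{-1}\sigma}$ on both sides): this is a careful bookkeeping check, using the naturality of the distributivity isomorphism under permutations of the indexing set. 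Once this compatibility is settled, Galois descent delivers the proposition.
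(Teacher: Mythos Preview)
Your approach is correct and genuinely different from the paper's. The paper never passes to $\GalMod \mathbb L/\K$ to check invertibility; instead it works entirely on the $\K$-side. It writes $M\otimes_S N$ as the coequalizer of $M\otimes S\otimes N\rightrightarrows M\otimes N$, and then shows directly that $\Cor_{\mathbb L/\K}$ preserves this particular coequalizer. This is done by a case analysis exploiting the hypothesis that $M_S$ is finitely generated projective: first $M=S$ (trivial), then $M=S^{\oplus m}$ (using Lemma~\ref{lem:dimension} to identify $\Cor$ of a direct sum), and finally $M$ a retract of $S^{\oplus m}$ (via a general ``retracts of coequalizers are coequalizers'' lemma). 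Your argument bypasses all of this: after base change along $\mathbb L\otimes_\K(-)$ the question becomes the standard distributivity isomorphism
\[
\bigl(\textstyle\bigotimes_\sigma {}^\sigma M\bigr)\otimes_{\bigotimes_\sigma {}^\sigma S}\bigl(\textstyle\bigotimes_\sigma {}^\sigma N\bigr)\;\cong\;\textstyle\bigotimes_\sigma\bigl({}^\sigma M\otimes_{{}^\sigma S}{}^\sigma N\bigr),
\]
which holds for \emph{arbitrary} $M$ and~$N$; so your method in fact proves the proposition without the finitely-generated-projective assumption on~$M_S$. The paper's route, by contrast, makes essential use of that hypothesis and of Lemma~\ref{lem:dimension}. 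What your approach buys is a cleaner and more general statement; what the paper's approach buys is that everything stays inside $\Mod\K$ and the argument is entirely elementary, with no need to track the $G$-equivariance of the various identifications (which, as you note, is the only mildly delicate point in your version).
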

\begin{proof}
In order to simplify the exposition we will simply write $C$ instead of $\Cor_{\mathbb L/\K}$. Recall that $M \otimes_S N$ is defined as the coequalizer
\begin{equation}\label{eq:coeq23}
\xymatrix{
M \otimes S \otimes N \ar@<.8ex>[rr]^-{\mu_M \otimes \id_N} \ar@<-.8ex>[rr]_-{\id_M \otimes \mu_N} && M \otimes N \ar[r] & M\otimes_S N\,,
}
\end{equation}
where $\mu_M$ (resp.\ $\mu_N$) denotes the right (resp.\ left) action of $S$ on $M$ (resp. on $N$). Note that $M\otimes_SN$ comes equipped with an $R\text{-}T$-bimodule structure induced by the left action of $R$ on $M$ and by the right action of $T$ on $N$. Since by Proposition~\ref{prop:monoidal1} the functor $C$ is symmetric monoidal it is enough to show that the induced diagram
\begin{equation}\label{eq:diagram}
\xymatrix{
C(M \otimes S \otimes N) \ar@<.8ex>[rr]^-{C(\mu_M \otimes \id_N)} \ar@<-.8ex>[rr]_-{C(\id_M \otimes \mu_N)} && C(M \otimes N) \ar[r] & C(M \otimes_S N)
}
\end{equation}
is a coequalizer in $\Mod \K$. The proof is now divided into three different cases:

{\it Case 1:} Assume that $M \simeq S$. In this case our claim is clear since $S\otimes_SN\simeq N$.

{\it Case 2:} Assume that $M \simeq S^{\oplus m}$ for some integer $m > 1$. In this case the above coequalizer \eqref{eq:coeq23} identifies with 
$$
\xymatrix{
\bigoplus_{i=1}^m \big(S\otimes S \otimes N \ar@<.8ex>[rr]^-{\mu_S \otimes \id_N} \ar@<-.8ex>[rr]_-{\id_S \otimes \mu_N} && S \otimes N \ar[r]  & S\otimes_S N  \big)
}
$$
and hence by Lemma~\ref{lem:dimension} the diagram \eqref{eq:diagram} identifies with the following direct sum of diagrams
\begin{equation}\label{eq:diag-big}
\xymatrix{
\bigoplus_{i=1}^{m^{|G|}}\big(C(S\otimes S\otimes N) \ar@<.8ex>[rr]^-{C(\mu_S \otimes \id_N)} \ar@<-.8ex>[rr]_-{C(\id_S \otimes \mu_N)}&& C(S\otimes N) \ar[r] & C(M\otimes_SN)  \big)\,.
}
\end{equation}
Now, since by Case~1 the diagram inside the brackets is a coequalizer in $\Mod \K$ one deduces that \eqref{eq:diag-big} is also a coequalizer. This proves Case~2.

{\it Case 3:} In full generality now, we can assume that $M$ is a retract of $S^{\oplus m}$ for some integer $m \geq 1$. By definition, there exist maps of right $S$-modules
\begin{eqnarray*}
\iota_M\colon M \to S^{\oplus m} & & \pi_M\colon S^{\oplus m} \too M
\end{eqnarray*}
verifying the equality $\pi_M \circ \iota_M = \id_M$. This allows us to construct the following diagram in $\Mod \mathbb{L}$:
\begin{equation}\label{eq:diagram-big1}
\xymatrix{
M \otimes S \otimes N \ar@<.8ex>[rr]^-{\mu_M \otimes \id_N} \ar@<-.8ex>[rr]_-{\id_M \otimes \mu_N}  && M \otimes N \ar[r] & M \otimes_SN \\
S^{\oplus m} \otimes S \otimes N \ar[u]^-{\pi_M \otimes \id_S \otimes \id_N} \ar@<.8ex>[rr]^-{\mu_{S^{\oplus m}} \otimes \id_N} \ar@<-.8ex>[rr]_-{\id_{S^{\oplus m}} \otimes \mu_N}  && S^{\oplus m} \otimes N  \ar[u]_-{\pi_M \otimes \id_N} \ar[r] & S^{\oplus m}\otimes_S N \ar[u]_-{\pi} \\
M \otimes  S \otimes N \ar[u]^-{\iota_M \otimes \id_S \otimes \id_N} \ar@<.8ex>[rr]^-{\mu_M \otimes \id_N} \ar@<-.8ex>[rr]_-{\id_M \otimes \mu_N}  && M \otimes N \ar[u]_-{\iota_M \otimes \id_N} \ar[r] & M \otimes_SN \ar[u]_-{\iota} \,,
}
\end{equation}
where $\iota$ and $\pi$ are induced by the universal property of the coequalizer. Note that the three vertical compositions are identities. By applying the functor $C$ to \eqref{eq:diagram-big1} we obtain, thanks to Case~2, an analogous diagram where the middle row is an equalizer. The proof now follows from the general Lemma~\ref{lem:general} below.
\end{proof}
\begin{lemma}\label{lem:general}
Consider the following commutative diagram
$$
\xymatrix{
\overline{X} \ar@<.8ex>[r]^-{\overline{f}} \ar@<-.8ex>[r]_-{\overline{g}}  & \overline{Y} \ar[r]^-{\overline{h}} & Z \\
X \ar[u]^-{\pi_X} \ar@<.8ex>[r]^-{f} \ar@<-.8ex>[r]_-{g}  & Y \ar[u]_-{\pi_Y} \ar[r]^-h & Z \ar[u]_-{\pi_Z} \\
\overline{X} \ar[u]^-{\iota_X} \ar@<.8ex>[r]^-{\overline{f}} \ar@<-.8ex>[r]_-{\overline{g}}  & \overline{Y} \ar[u]_-{\iota_Y} \ar[r]^-{\overline{h}} & \overline{Z} \ar[u]_-{\iota_Z} 
}
$$
where the middle row is a coequalizer and each vertical composite is the identity. By the commutativity of the left hand side squares we mean that $\overline{f} \circ \pi_X = \pi_Y \circ f$, $\overline{g}\circ \pi_X= \pi_Y \circ g$, $f \circ \iota_X = \iota_Y \circ \overline{f}$, and $g \circ \iota_X = \iota_Y \circ \overline{g}$. Under these assumptions, the first (and last) row is also a coequalizer.
\end{lemma}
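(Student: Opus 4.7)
The plan is to verify the universal property of the coequalizer directly for the top (equivalently bottom) row by transporting the question to the middle row and using the retraction identities $\pi_X\iota_X=\id_{\overline{X}}$, $\pi_Y\iota_Y=\id_{\overline{Y}}$, $\pi_Z\iota_Z=\id_{\overline{Z}}$ together with the commutativity of the four squares of the diagram. Note that the top and bottom rows consist of the same objects and arrows $\overline{X}\rightrightarrows \overline{Y}\to \overline{Z}$, so it suffices to treat one of them.

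First I would check that $\overline{h}$ coequalizes $\overline{f}$ and $\overline{g}$: combining the right square $h\iota_Y=\iota_Z\overline{h}$ with $\iota_Y\overline{f}=f\iota_X$ and $\iota_Y\overline{g}=g\iota_X$, one has $\iota_Z\overline{h}\overline{f}=hf\iota_X=hg\iota_X=\iota_Z\overline{h}\overline{g}$, and precomposing with $\pi_Z$ and using $\pi_Z\iota_Z=\id_{\overline{Z}}$ gives $\overline{h}\overline{f}=\overline{h}\overline{g}$.

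Next, for the universal property, given $\overline{k}\colon \overline{Y}\to W$ with $\overline{k}\overline{f}=\overline{k}\overline{g}$, I would set $k:=\overline{k}\pi_Y\colon Y\to W$. Using $\pi_Y f=\overline{f}\pi_X$ and $\pi_Y g=\overline{g}\pi_X$, one computes $kf=\overline{k}\,\overline{f}\pi_X=\overline{k}\,\overline{g}\pi_X=kg$. The coequalizer property of the middle row then produces a unique $u\colon Z\to W$ with $uh=k$. I would then define $\overline{u}:=u\iota_Z\colon\overline{Z}\to W$ and verify $\overline{u}\,\overline{h}=u\iota_Z\overline{h}=uh\iota_Y=k\iota_Y=\overline{k}\pi_Y\iota_Y=\overline{k}$, where the crucial last step uses $\pi_Y\iota_Y=\id_{\overline{Y}}$.

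Finally, to prove uniqueness, suppose $\overline{v}\colon\overline{Z}\to W$ satisfies $\overline{v}\,\overline{h}=\overline{k}$. Setting $v:=\overline{v}\pi_Z$ and using the top-right square $\pi_Z h=\overline{h}\pi_Y$, one gets $vh=\overline{v}\pi_Z h=\overline{v}\,\overline{h}\pi_Y=\overline{k}\pi_Y=k$, so by uniqueness in the middle row $v=u$, and hence $\overline{u}=u\iota_Z=v\iota_Z=\overline{v}\pi_Z\iota_Z=\overline{v}$ by $\pi_Z\iota_Z=\id_{\overline{Z}}$. The proof is essentially a bookkeeping exercise; the only mild subtlety is to keep straight which of the four small squares supplies each identity and to invoke each retraction equation $\pi\iota=\id$ at exactly the right moment, so I do not expect any serious obstacle.
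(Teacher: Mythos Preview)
Your proof is correct and follows essentially the same route as the paper: transport the test map along $\pi_Y$ to the middle row, use the coequalizer property there, and return via $\iota_Z$. The only cosmetic difference is in the uniqueness step: the paper observes that $\overline{h}$ is an epimorphism (as a retract of the epimorphism~$h$) and concludes immediately, whereas you push the competing map back to the middle row via $\pi_Z$ and invoke uniqueness there---both arguments are one line.
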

\begin{proof}
Let $t\colon \overline{Y}\to T$ be such that $t\overline f=f\overline g$. Then $t\pi_Yf=t\overline f\pi_X= t\overline g\pi_X= t\pi_Yg$, hence there exists a unique $t'\colon Z\to T$ such that $t'h=t\pi_Y$. Now set $t'':=t'\iota_Z$. Then $t''\overline h=t' \iota_Z \overline h = t' h \iota_Y= t\pi_Y \iota_Y =t$. Since $\overline h$ is an epimorphism (being a retract of an epimorphism), $t''$ is the unique such morphism $\overline Z\to T$. This proves that the first row is a coequalizer, as claimed.
\end{proof} 

In the next definition, we extend Remark~\ref{rem:algebras_to_algebras} from algebras to categories.

\begin{defi}\label{defi:Cor}
The $\otimes$-functor \eqref{eq:co-restriction} induces, by ``base-change'' of enriched categories, a well-defined $\otimes$-functor 
\begin{equation}\label{eq:corestriction}
C_{\mathbb L/\K} \colon \Cat_{\mathbb{L}} \to \Cat_\K\,,
\end{equation}
 as follows. 
For $A\in \Cat_\mathbb L$, one obtains the $\K$-category $C_{\mathbb L/\K}(A)$ with the same object set as~$A$, and with composition maps
$\Cor\, A(y,z)\otimes_\K \Cor\, A(x,y) \simeq \Cor (A(y,z)\otimes_\mathbb L A(x,y)) \to \Cor\,A(x,z)$
and identities $\K\simeq \Cor\,\mathbb L \to \Cor \,A(x,x)$, obtained by applying the functor $\Cor=\Cor_{\mathbb L/\K}$ to the compositions and identities of $A$ and by composing with its monoidal structure maps, in the evident way. If $F\colon A\to B$ is an $\mathbb L$-functor with components $F(x,y)$, then $C_{\mathbb L/\K}(F)$ is the $\K$-functor $C_{\mathbb L/\K}(A)\to C_{\mathbb L/\K}(B)$ with components $\Cor\,F(x,y)$. 
\end{defi}
Recall from \S\ref{sec:intro} that the notation $c_{\mathbb L/\K}\colon\Br(\mathbb L) \to \Br(\K)$ stands for the corestriction map associated to the Galois extension. 
Classically, this map is defined via the cohomological description of the Brauer group (see \cite{Serre}*{Ch.\,X\,\S5}), as the corestriction (or transfer) map on second Galois cohomology. 
By a result of Riehm \cite{Riehm}*{Theorem~11}, the corestriction map can also be described by a functorial construction on Azumaya (i.e.\ central simple) algebras which, upon direct inspection (cf.\ \cite{draxl}*{\S8-9} for details), coincides with the functor $C_{\mathbb{L}/\K}$ in~\eqref{eq:corestriction}. 
Hence $[C_{\mathbb L/\K} (A)]= c_{\mathbb L/\K}(A)$ in $\Br(\K)$ for each Azumaya $\mathbb L$-algebra~$A$. This proves the second part of Proposition~\ref{prop:descent} below. We note in passing that, in Riehm's article, the fact that (his version of) the functor $C_{\mathbb{L}/\K}$ descends to the Brauer groups is deduced indirectly from  \cite{Riehm}*{Theorem~11}, while we have chosen to verify this directly within this section (cf.\ the end of the proof of \emph{loc.\,cit.}).  
\begin{prop}\label{prop:descent}
The corestriction functor \eqref{eq:corestriction} induces a well-defined $\otimes$-functor $C_{\mathbb{L}/\K}\colon \Ho(\Cat_{\mathbb{L}})^\otimes \to \Ho(\Cat_\K)^\otimes$ such that $\mathrm{Pic}(C_{\mathbb{L}/\K})\simeq c_{\mathbb L/\K}$ under the identification of Corollary~\ref{cor:identification}.
\end{prop}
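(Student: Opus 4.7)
The plan is in three coordinated steps, reducing throughout to the case of $\mathbb L$-algebras via Proposition~\ref{prop:Azumaya}: (1) show that $C_{\mathbb L/\K}$ descends to a functor between Morita homotopy categories on the subcategory of algebras; (2) combine with the monoidality of $C_{\mathbb L/\K}$ on $\Cat_\mathbb L$ to obtain a $\otimes$-functor restricting to $\otimes$-invertible objects; (3) identify the induced Picard map with the classical corestriction $c_{\mathbb L/\K}$.

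For step~(1), I would use the explicit description of $\Ho(\Alg_\mathbb L)\subset \Ho(\Cat_\mathbb L)$ given at the end of Section~\ref{sec:can}: morphisms $R\to S$ are isomorphism classes of $R$-$S$-bimodules $M$ that are finitely generated projective as right $S$-modules, with composition given by tensor product over the middle algebra. By Remark~\ref{rem:algebras_to_algebras}, $\Cor_{\mathbb L/\K}$ sends $\mathbb L$-algebras to $\K$-algebras and bimodules to bimodules; by Proposition~\ref{prop:tensorproduct} it respects composition; and by Proposition~\ref{prop:monoidal1} it preserves the tensor unit. The remaining check is that $\Cor_{\mathbb L/\K}(M)$ stays finitely generated projective as a right $\Cor_{\mathbb L/\K}(S)$-module. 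This follows because $M$ is by hypothesis a retract of $S^{\oplus n}$ in $\Mod S$; applying the functor $\Cor_{\mathbb L/\K}$ (which preserves retracts, being a functor) yields a retract of $\Cor_{\mathbb L/\K}(S^{\oplus n})$, and the latter is identified via Lemma~\ref{lem:dimension} and Proposition~\ref{prop:tensorproduct} with the free right module $\Cor_{\mathbb L/\K}(S)^{\oplus n^{|G|}}$.

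For step~(2), since $C_{\mathbb L/\K}\colon \Cat_\mathbb L \to \Cat_\K$ is symmetric monoidal by construction (Definition~\ref{defi:Cor}), step~(1) yields a $\otimes$-functor $\Ho(\Alg_\mathbb L)\to \Ho(\Alg_\K)$ which, thanks to Proposition~\ref{prop:Azumaya}, restricts to the full subcategories of $\otimes$-invertibles to give the desired $\otimes$-functor $\Ho(\Cat_\mathbb L)^\otimes \to \Ho(\Cat_\K)^\otimes$. For step~(3), under the identifications of Corollary~\ref{cor:identification}, $\mathrm{Pic}(C_{\mathbb L/\K})$ sends the class of an Azumaya $\mathbb L$-algebra $A$ to the class of the $\K$-algebra $\Cor_{\mathbb L/\K}(A)=\bigl(\bigotimes_{\sigma \in G}{}^\sigma A\bigr)^G$, which is precisely the classical formula for $c_{\mathbb L/\K}([A])$; see \cite{draxl}*{\S6}. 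The main technical obstacle is the finite-projectivity verification in step~(1): $\Cor_{\mathbb L/\K}$ is not additive (cf.\ Lemma~\ref{lem:dimension}), so direct sums are not preserved on the nose and one genuinely needs the combination of Proposition~\ref{prop:tensorproduct} with Lemma~\ref{lem:dimension} to identify $\Cor_{\mathbb L/\K}$ of a finite free right $S$-module with a finite free right $\Cor_{\mathbb L/\K}(S)$-module, rather than any naive cardinality-preserving identification.
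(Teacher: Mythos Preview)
Your proposal is correct and follows essentially the same route as the paper: reduce to the full subcategory $\Ho(\Alg_{\mathbb L})$ via the bimodule description, check that $\Cor_{\mathbb L/\K}$ respects composition (Proposition~\ref{prop:tensorproduct}) and sends finitely generated projective right $S$-modules to finitely generated projective right $\Cor_{\mathbb L/\K}(S)$-modules, then use Proposition~\ref{prop:Azumaya} to pass to $\Ho(\Cat)^\otimes$, and finally invoke the classical identification of the corestriction on Azumaya algebras. The only cosmetic differences are that the paper reduces from retracts to free modules first (idempotents go to idempotents) and then writes $S^{\oplus m}\simeq \mathbb L^{\oplus m}\otimes_{\mathbb L} S$ using Proposition~\ref{prop:monoidal1} rather than Proposition~\ref{prop:tensorproduct} for the key identification $\Cor(S^{\oplus m})\simeq \Cor(S)^{\oplus m^{|G|}}$ as right $\Cor(S)$-modules; your use of Proposition~\ref{prop:tensorproduct} with $R=S=\mathbb L$ accomplishes the same thing, and your retract argument is equally valid since the retraction maps, being right $S$-linear, are sent by the monoidal functor $\Cor$ to right $\Cor(S)$-linear maps. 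For step~(3) the paper cites Riehm \cite{Riehm}*{Theorem~11} (see also \cite{draxl}*{\S8--9}) rather than \cite{draxl}*{\S6}; you may want to adjust the reference accordingly.
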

\begin{proof}
It remains only to verify the first part, namely, that $C_{\mathbb L/\K}$ is well-defined at the level of the Morita homotopy categories. As proved in Lemma~\ref{lemma:alg} below, the functor~\eqref{eq:co-restriction} induces a well-defined $\otimes$-functor $C_{\mathbb L/\K}\colon \Ho(\Alg_{\mathbb{L}}) \to \Ho(\Alg_\K)$. Hence, by restricting attention to $\otimes$-invertible objects, we obtain a diagram
\[
\xymatrix{
\Ho(\Alg_\mathbb L)^\otimes \ar[r]^-{C_{\mathbb L/\K}} \ar[d]_\simeq & \Ho(\Alg_\K)^\otimes \ar[d]^\simeq \\
\Ho(\Cat_\mathbb L)^\otimes \ar@{..>}[r] & \Ho(\Cat_\K)^\otimes\,,
}
\]
where the vertical arrows are the symmetric monoidal fully faithful inclusions. Since these latter functors are equivalences by Proposition~\ref{prop:Azumaya}, we conclude that $C_{\mathbb L/\K}$ has an (essentially) unique extension to a symmetric monoidal functor 
$$C_{\mathbb L/\K}\colon \Ho(\Cat_\mathbb L)^\otimes \to \Ho(\Cat_\K)^\otimes$$
as claimed. This ends the proof of Proposition \ref{prop:descent}.
\end{proof}

\begin{lemma} \label{lemma:alg}
The functor \eqref{eq:co-restriction} induces a $\otimes$-functor $C_{\mathbb L/\K} \colon \Ho(\Alg_{\mathbb{L}}) \to \Ho(\Alg_\K)$.
\end{lemma}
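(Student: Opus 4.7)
The plan is to verify that the functor $\Cor=\Cor_{\mathbb L/\K}$ of \eqref{eq:co-restriction} descends to a symmetric monoidal functor between the homotopy categories in their explicit bimodule presentations given at the end of Section~\ref{sec:can}: objects are $\K$-algebras and morphisms $R\to S$ are isomorphism classes of bimodules in $\mathrm{rep}(R,S)$ (\ie\ those which are finitely generated projective as right $S$-modules). There are four things to check: (1) $\Cor$ sends $\mathbb L$-algebras to $\K$-algebras; (2) if ${}_RM_S\in \mathrm{rep}(R,S)$, then $\Cor M$ is a $\Cor R$-$\Cor S$-bimodule which is f.g.p.\ as a right $\Cor S$-module; (3) $\Cor$ preserves the composition of bimodules, i.e.\ tensor product over the middle algebra; and (4) $\Cor$ is compatible with the tensor product of algebras, i.e.\ the canonical maps $\Cor R\otimes_\K \Cor S\to \Cor(R\otimes_\mathbb L S)$ are $\K$-algebra isomorphisms.

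Items~(1) and~(4) are immediate from Proposition~\ref{prop:monoidal1} combined with Remark~\ref{rem:algebras_to_algebras}, and item~(3) is precisely Proposition~\ref{prop:tensorproduct}. So the only real work is~(2). The bimodule structure on $\Cor M$ comes from applying $\Cor$ to the two actions $R\otimes_\mathbb L M\to M$ and $M\otimes_\mathbb L S\to M$ and post-composing with the lax monoidal structure maps of $\Cor$, exactly as in Definition~\ref{defi:Cor}. To show that $\Cor M$ is f.g.p.\ as a right $\Cor S$-module, I would express $M$ as a retract of $S^{\oplus n}$ for some $n\geq 1$ via right $S$-linear maps $\iota\colon M\to S^{\oplus n}$ and $\pi\colon S^{\oplus n}\to M$ with $\pi\iota=\id_M$. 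Since $\Cor$ is a functor, $\Cor(\pi)\circ \Cor(\iota)=\id_{\Cor M}$; since $\Cor$ is lax monoidal both $\Cor(\iota)$ and $\Cor(\pi)$ are right $\Cor S$-linear; hence $\Cor M$ is a retract of $\Cor(S^{\oplus n})$ as a right $\Cor S$-module.

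It then suffices to check that $\Cor(S^{\oplus n})$ is itself f.g.p.\ as a right $\Cor S$-module. For this I would invoke Lemma~\ref{lem:dimension} with $V=S$: the isomorphism $\Cor(S^{\oplus n})\simeq \Cor(S)^{\oplus n^{|G|}}$ constructed there is a composition of natural $\mathbb L$-linear isomorphisms of right $(\bigotimes_\sigma{}^\sigma S)$-modules \emph{before} taking $G$-invariants, and so descends to a right $\Cor S$-linear isomorphism after taking invariants; this identifies $\Cor(S^{\oplus n})$ with a free right $\Cor S$-module of rank~$n^{|G|}$, as wanted.

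With (1)--(4) in place, the assignment $R\mapsto \Cor R$, $[M]\mapsto [\Cor M]$ is well-defined on isomorphism classes (by functoriality of $\Cor$), preserves composition (by Proposition~\ref{prop:tensorproduct}), preserves identities (as $\Cor R$ viewed as a $\Cor R$-bimodule represents the identity at $\Cor R$), and is symmetric monoidal (by Proposition~\ref{prop:monoidal1}), yielding the desired $\otimes$-functor $C_{\mathbb L/\K}\colon \Ho(\Alg_\mathbb L)\to \Ho(\Alg_\K)$. The main subtle point, and probably the only real obstacle, is the verification that the two isomorphisms \eqref{eq:iso1} and \eqref{eq:iso2} entering the proof of Lemma~\ref{lem:dimension} are right $(\bigotimes_\sigma {}^\sigma S)$-linear when $V=S$; this is a straightforward but slightly fiddly inspection of the permutation of tensor factors and of indexing functions $f\colon G\to\{1,\ldots,n\}$ used there, after which everything else is essentially bookkeeping.
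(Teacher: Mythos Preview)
Your proposal is correct and follows the same overall strategy as the paper: use the bimodule description of $\Ho(\Alg)$, invoke Proposition~\ref{prop:tensorproduct} for composition, and reduce the finitely-generated-projective check to the case $M_S\simeq S^{\oplus n}$.

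The one place where your route differs is in showing $\Cor(S^{\oplus n})$ is free over $\Cor S$. You apply Lemma~\ref{lem:dimension} directly with $V=S$ and then must verify that the isomorphisms \eqref{eq:iso1} and \eqref{eq:iso2} are right $(\bigotimes_\sigma{}^\sigma S)$-linear before taking invariants; as you say, this is straightforward but fiddly. The paper instead writes $S^{\oplus n}\simeq \mathbb L^{\oplus n}\otimes_{\mathbb L} S$ and uses the monoidal structure isomorphism $\Cor(\mathbb L^{\oplus n}\otimes_{\mathbb L} S)\simeq \Cor(\mathbb L^{\oplus n})\otimes_\K \Cor(S)$, applying Lemma~\ref{lem:dimension} only for $V=\mathbb L$. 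This automatically carries the right $\Cor S$-module structure (it lives on the second tensor factor, and the monoidal structure map is natural), so no separate linearity check is needed. Your approach works; the paper's trick is just a cleaner way to package the same computation.
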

\begin{proof} 
In order to simplify the exposition we will simply write $C$ instead of $\Cor_{\mathbb L/\K}$. Being monoidal, as we have already remarked $C$ maps $\mathbb L$-algebras to $\K$-algebras and bimodules to bimodules. Let $R$ and $S$ be two $\mathbb{L}$-algebras and ${}_RM_S$ a $R\text{-}S$-bimodule which is finitely generated and projective as a right $S$-module. 
Consider the description of $\Ho(\Alg)$ given at the end of Section~\ref{sec:can} in terms of bimodules and tensor products. 
If ${}_SN_T$ is a $S\text{-}T$-bimodule which is finitely generated and projective as a right $T$-module, we see thanks to Proposition~\ref{prop:tensorproduct} that $C$ preserves their composition: $[C(M \otimes_S N)]= [C(M)\otimes_{C(S)} C(N)]$. 
Similarly, it preserves identities: $[C({}_SS_S)]= [{}_{C(S)}C(S)_{C(S)}]$.
It remains only to show that the $C(R)\text{-}C(S)$-bimodule $C({}_RM_S)$ obtained is also finitely generated and projective as a right $C(S)$-module.
From Lemma~\ref{lemma:saturation_vs_P} one knows that $M_S$ belongs to~$\smash{S^\natural_\oplus}$. By functoriality of $C$ idempotents are mapped to idempotents and so one can assume without loss of generality that $M_S$ belongs to~$S_\oplus$. It remains then to show that $C(M)_{C(S)}$ belongs to $C(S)_\oplus$. (Note that this is not automatic since $C$ is {\em not} additive.) 
By combining the canonical isomorphism $C(\mathbb L)\simeq \K$ with Lemma~\ref{lem:dimension} we obtain, for every integer $m\geq 1$, an isomorphism
\begin{equation} \label{eq:concludes1}
C(\mathbb{L}^{\oplus m}) \simeq \textstyle\bigoplus_{i=1}^{m^{|G|}}\K\,.
\end{equation}
Hence, since $M_S \simeq S^{\oplus m}$ the following isomorphisms of right $C(S)$-modules hold:
\begin{align*}
C(M_S) 
& \;\simeq\;  C(S^{\oplus m}) \\
& \;\simeq\;  C(\mathbb{L}^{\oplus m} \otimes_\mathbb L S)\\
& \;\simeq\;  C(\mathbb{L}^{\oplus m}) \otimes_{\mathbb{K}} C(S) 
 & \textrm{by Prop.\,\ref{prop:monoidal1} }\\
& \;\simeq\;  (\textstyle\bigoplus_{i=1}^{m^{|G|}} \K) \otimes_\K C(S) & \textrm{by \eqref{eq:concludes1}} \\
& \;\simeq\;  \textstyle\bigoplus_{i=1}^{ m^{|G|}} C (S)\,.
\end{align*}
This shows that $C(M_S)$ belongs to $C(S)_\oplus$ and hence we conclude that $C$ descends to a well-defined functor $C_{\mathbb L/\K}\colon \Ho(\Alg_{\mathbb{L}}) \to \Ho(\Alg_\K)$.
Finally, the fact that this functor is symmetric monoidal is inherited from the corresponding property of~\eqref{eq:co-restriction}.
\end{proof}

The proof of Theorem~\ref{thm:main2} is now immediately obtained by combining  Proposition~\ref{prop:descent} with Corollaries~\ref{cor:identification} and \ref{cor:base-change}.

\begin{bibdiv}
\begin{biblist}

\bib{Ant-Gep}{article}{
   author={Antieau, B.},
   author={Gepner, D.},
   title={Brauer groups and \'etale cohomology in derived algebraic geometry},
   journal={arXiv 1210.0290},
   volume={},
   date={2012},
   number={},
   pages={},
}

\bib{AG}{article}{
   author={Auslander, M.},
   author={Goldman, O.},
   title={The Brauer group of a commutative ring},
   journal={Trans. Amer. math. Soc.},
   volume={97},
   date={1960},
   pages={367--409},
}

\bib{baez:week209}{article}{
   author={Baez, John},
   title={This Week's Finds in Mathematical Physics (Week 209)},
   journal={},
   volume={},
   date={2004},
   number={},
   pages={},
   eprint={http://math.ucr.edu/home/baez/week209.html}
}

\bib{barwick}{article}{
   author={Barwick, Clark},
   title={On (enriched) left Bousfield localizations of model categories},
   journal={preprint},
   volume={},
   date={2007},
   number={},
   pages={},
   issn={},
   review={ arXiv:0708.2067v2},
}

\bib{beke:sheafifiable}{article}{
   author={Beke, Tibor},
   title={Sheafifiable homotopy model categories},
   journal={Math. Proc. Cambridge Philos. Soc.},
   volume={129},
   date={2000},
   number={3},
   pages={447--475},
}

\bib{Berger-Moerdijk}{article}{
   author={Berger, C.},
   author={Moerdijk, I.},
   title={On the homotopy theory of enriched categories},
   journal={Q. J. Math.},
   volume={64},
   date={2013},
   number={3},
   pages={805--846},
}

\bib{B-Vitale}{article}{
   author={Borceux, F.},
   author={Vitale, E.},
   title={Azumaya categories},
   journal={Appl. Categ. Structures},
   volume={10},
   date={2002},
   number={5},
   pages={449--467},
}

\bib{ivo:unitary}{article}{
   author={Dell'Ambrogio, Ivo},
   title={The unitary symmetric monoidal model category of small $\rm C^*$-categories},
   journal={Homology Homotopy Appl.},
   volume={14},
   date={2012},
   number={2},
   pages={101--127},
}

\bib{CcatMorita}{article}{
   author={Dell'Ambrogio, I.},
   author={Tabuada, G.},
   title={Morita homotopy theory of $C^*$-categories},
   journal={J. Algebra},
   volume={398},
   date={2014},
   pages={162--199},
}

\bib{draxl}{book}{
   author={Draxl, P. K.},
   title={Skew fields},
   series={London Mathematical Society Lecture Note Series},
   volume={81},
   publisher={Cambridge University Press},
   place={Cambridge},
   date={1983},
   pages={ix+182},
}

\bib{Duskin}{article}{
   author={Duskin, John W.},
   title={The Azumaya complex of a commutative ring. Categorical algebra and its applications (Louvain-La-Neuve, 1987)},
   journal={Lecture Notes in Math.},
   volume={1348},
   date={1988},
   number={},
   pages={107-117},
   issn={},
}

\bib{hirschhorn}{book}{
   author={Hirschhorn, Philip S.},
   title={Model categories and their localizations},
   series={Mathematical Surveys and Monographs},
   volume={99},
   publisher={American Mathematical Society},
   place={Providence, RI},
   date={2003},
   pages={xvi+457},
}

\bib{hovey:model}{book}{
   author={Hovey, Mark},
   title={Model categories},
   series={Mathematical Surveys and Monographs},
   volume={63},
   publisher={American Mathematical Society},
   place={Providence, RI},
   date={1999},
   pages={xii+209},
}

\bib{Johnson}{article}{
   author={Niles Johnson},
   title={Azumaya objects in triangulated bicategories},
   journal={J. Homotopy Relat. Struct.},
   volume={},
   date={2013},
   number={},
   pages={},
   doi={10.1007/s40062-013-0035-6},
}

\bib{keller:dg}{article}{
   author={Keller, Bernhard},
   title={On differential graded categories},
   conference={
      title={International Congress of Mathematicians. Vol. II},
   },
   book={
      publisher={Eur. Math. Soc., Z\"urich},
   },
   date={2006},
   pages={151--190},
}

\bib{kelly-lack:VCAT}{article}{
   author={Kelly, G. M.},
   author={Lack, S.},
   title={$\scr V$-Cat is locally presentable or locally bounded if $\scr V$
   is so},
   journal={Theory Appl. Categ.},
   volume={8},
   date={2001},
   pages={555--575},
}

\bib{knus-ojanguren}{book}{
   author={Knus, M.-A.},
   author={Ojanguren, M.},
   title={Th\'eorie de la descente et alg\`ebres d'Azumaya},
   language={French},
   series={Lecture Notes in Mathematics, Vol. 389},
   publisher={Springer-Verlag},
   place={Berlin},
   date={1974},
}

\bib{Lurie}{book}{
   author={Lurie, Jacob},
   title={Higher topos theory},
   series={Annals of Mathematics Studies},
   volume={170},
   publisher={Princeton University Press},
   place={Princeton, NJ},
   date={2009},
   pages={xviii+925},
}

\bib{maclane}{book}{
   author={Mac Lane, Saunders},
   title={Categories for the working mathematician},
   series={Graduate Texts in Mathematics},
   volume={5},
   edition={2},
   publisher={Springer-Verlag},
   place={New York},
   date={1998},
   pages={xii+314},
}

\bib{M-Vitale}{article}{
   author={Moens, M.-A.},
    author={Vitale, E. M.},
   title={Groupoids and the Brauer group},
   journal={Cahiers Topologie GŽom. DiffŽrentielle CatŽg.},
   volume={41},
   date={2000},
   number={4},
   pages={305--313},
}

\bib{Morita}{article}{
   author={Morita, K.},
   title={Duality for modules and its applications to the theory of rings with minimum condition},
   journal={Sci. Rep. Tokyo KyoiKu Daigakn Sect. A},
   volume={6},
   date={1958},
   pages={83--142},
}

\bib{pareigis}{book}{
   author={Pareigis, Bodo},
   title={Categories and functors},
   series={Translated from the German. Pure and Applied Mathematics, Vol.
   39},
   publisher={Academic Press},
   place={New York},
   date={1970},
   pages={viii+268},
}

\bib{rezk:folk}{article}{
   author={Rezk, Charles},
   title={A model category for categories},
   conference={
      title={unpublished},
   },
   book={
      series={},
      volume={},
      publisher={},
      place={},
   },
   date={1996},
   eprint={http://www.math.uiuc.edu/~rezk/papers.html},
   pages={},
   review={},
}

\bib{Riehm}{article}{
   author={Riehm, I. A.},
   title={The corestriction of algebraic structures},
   journal={Invent. Math.},
   volume={11},
   date={1970},
   pages={73--98},
}

\bib{Serre}{book}{
   author={Serre, Jean-Pierre},
   title={Corps locaux},
   language={French},
   series={Publications de l'Institut de Math\'ematique de l'Universit\'e de
   Nancago, VIII},
   publisher={Actualit\'es Sci. Indust., No. 1296. Hermann, Paris},
   date={1962},
   pages={243},
}

\bib{toen:derived_Morita}{article}{
   author={To{\"e}n, Bertrand},
   title={The homotopy theory of $dg$-categories and derived Morita theory},
   journal={Invent. Math.},
   volume={167},
   date={2007},
   number={3},
   pages={615--667},
}

\bib{toen:azumaya}{article}{
   author={To{\"e}n, Bertrand},
   title={Derived Azumaya algebras and generators for twisted derived
   categories},
   journal={Invent. Math.},
   volume={189},
   date={2012},
   number={3},
   pages={581--652},
}

\bib{wisbauer-foundations}{book}{
   author={Wisbauer, Robert},
   title={Foundations of module and ring theory},
   series={Algebra, Logic and Applications},
   volume={3},
   edition={Revised and translated from the 1988 German edition},
   note={A handbook for study and research},
   publisher={Gordon and Breach Science Publishers},
   place={Philadelphia, PA},
   date={1991},
   pages={xii+606},
}

\end{biblist}
\end{bibdiv}
\end{document}